\DeclareSymbolFontAlphabet{\mathbb}{AMSb}
\DeclareSymbolFontAlphabet{\mathbbl}{bbold}
\renewcommand{\epsilon}{\varepsilon}
\renewcommand{\rho}{\varrho}
\renewcommand{\phi}{\varphi}
\newcommand{\NN}{\ensuremath{\mathbb{N}}\xspace}
\newcommand{\ZZ}{\ensuremath{\mathbb{Z}}\xspace}
\newcommand{\QQ}{\ensuremath{\mathbb{Q}}\xspace}
\newcommand{\CC}{\ensuremath{\mathbb{C}}\xspace}
\newcommand{\FF}{\ensuremath{\mathbb{F}}\xspace}
\newcommand{\TT}{\ensuremath{\mathbb{T}}\xspace}
\newcommand{\DD}{\ensuremath{\mathbbl{\Delta}}\xspace}
\newcommand{\Sp}{\mathcal{S}p}
\newcommand{\tc}{\ensuremath{\operatorname{TC}}}
\newcommand{\thh}{\ensuremath{\operatorname{THH}}}
\newcommand{\tp}{\ensuremath{\operatorname{TP}}}
\newcommand{\colim}{\operatorname{colim}}
\newcommand{\pic}{\mathrm{Pic}}
\newtheorem{thm}{Theorem}[section]
\newtheorem{prop}[thm]{Proposition}
\newtheorem{lem}[thm]{Lemma}
\newtheorem{cor}[thm]{Corollary}
\theoremstyle{remark}
\newtheorem{rem}[thm]{Remark}
\title{\texorpdfstring{$K$-Theory of Cuspidal Curves Over a Perfectoid Base And Formal Analogues}{K-Theory of Cuspidal Curves Over a Perfectoid Base And Formal Analogues}}
\author{Noah Riggenbach}
\date{}
\begin{document}

\maketitle

\begin{abstract}
    In this paper we continue the work, started in \cite{Me}, of using the recent advances in algebraic $K$-theory to extend computations done in characteristic $p$ to the mixed characteristic setting using perfectoid rings. We extend the work of Hesselholt-Nikolaus in \cite{Hesselholt_Nikolaus} on the algebraic $K$-Theory of cuspidal curves. We consider both cuspidal curves and the $p$-completion of cuspidal curves. Along the way we also study the $K$-theory of the $p$-completed affine line over a perfectoid ring.
\end{abstract}

\tableofcontents

\section{Introduction}
The purpose of this paper is to use the recent developments in algebraic $K$-theory and topological cyclic homology to extend the calculation in \cite{Hesselholt_Nikolaus} to the mixed characteristic setting. Our setup is as follows: let $R$ be a perfectoid ring, and let $a,b\geq 2$ be coprime integers. Then the class of rings we will be studying are the coordinate rings of the cuspidal curve $A:=R[x,y]/(y^a-x^b)$. For this ring we have that $R\to A\to R$ is the identity, where the first map is the inclusion of constant terms and the second map is the quotient map $x,y\mapsto 0$. In particular \[K(A)\simeq K(R)\times K(A,(x,y))\] and our goal is to compute the second term in this equivalence. For $R$ a perfect $\FF_p$-algebra this is \cite{Hesselholt_Nikolaus} and more generally for a regular $\FF_p$-algebra this is \cite{Hesselholt_cusps} subject to a conjecture proven in \cite{angeltveit2019picard}. In addition \cite{angeltveit2019picard} computes the sizes of the homotopy groups of the second terms in the case $R=\ZZ$, but the present paper is the first complete mixed characteristic result. For a discussion of perfectoid rings see \parencite[Section 3]{BMS1}. Examples of perfectoid rings include perfect $\FF_p$-algebras, the $p$-completion of rings of integers $\mathcal{O}_F$ for $F/\QQ$ algebraic containing $\QQ(\zeta_{p^\infty})$, and $\mathcal{O}_{\CC_p}$ where $\CC_p=\widehat{\overline{\QQ}}_p$. The limit of perfectoid rings is again perfectoid as well. 

In order to state the computation, we first recall some notation. Let $\widehat{\ZZ}$ be the profinite completion of $\ZZ$. In other words $\widehat{\ZZ}\cong \prod_{p\in \mathbb{P}}\ZZ_p$. For a functor $F:\mathrm{Sch}^{op}\to \Sp$ denote by $F(-;\widehat{\ZZ})$ the functor which is $F$ composed with the profinite completion functor $\widehat{(-)}:\Sp\to\Sp$. We also need some notation from \cite{Hesselholt_Nikolaus} which we will recall now: let $l(a,b,m)=|\{(i,j)\in \ZZ_{\geq 1}^2| ai+bj=m\}|$, and $s(a,b,r,p,m')$ be the unique positive integer if it exists such that $l(a,b,m'p^{s(a,b,r,p,m')-1})\leq r<l(a,b,m'p^{s(a,b,r,p,m')})$, and is zero if no such positive integer exists. Let $S(a,b,r)$ be the set of integers $m$ such that $l(a,b,m)\leq r$. Let $J_p$ be the set of non-negative integers coprime to $p$. Assume without loss of generality that $b\in J_p$, and let $m':=m/p^{v_p(m)}\in J_p$ and $a'=a/p^{v_p(a)}\in J_p$. Finally, let $h=h(a,b,r,p,m')$ be the piecewise function \[h(a,b,r,p,m')=\begin{cases}
s   & \textrm{ if } a',b\nmid m'\\
\min\{s, v_p(a)\} &\textrm{ if }a'\mid m\textrm{ and }b\nmid m'\\
0   &\textrm{ otherwise}
\end{cases}
\]
and for $r<0$ this function is zero.
\begin{thm}\label{thm: main computation}
Let $R$ be a perfectoid ring such that $K(R[t];\widehat{\ZZ})\simeq K(R;\widehat{\ZZ})$, such as any perfectoid ring containing a perfectoid valuation ring. Then \[K_{2r}(A,(x,y);\widehat{\ZZ})\cong \prod_{m'\in J_p}W_{h(a,b,r,p,m')}(R)\cong \mathbb{W}_S(R)/(V_a\mathbb{W}_{S/a}(R)+V_b\mathbb{W}_{S/b}(R))\] and the odd homotopy groups vanish.
\end{thm}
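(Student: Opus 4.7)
The plan is to reduce the computation to topological cyclic homology, decompose everything along the natural weight grading of the cuspidal curve, and then reassemble the answer via cyclotomic Frobenius towers, following the skeleton of Hesselholt--Nikolaus with perfectoid inputs.

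First, by the Dundas--Goodwillie--McCarthy theorem the cyclotomic trace yields an equivalence
\[K(A,(x,y);\widehat{\ZZ}) \simeq \tc(A,(x,y);\widehat{\ZZ}),\]
so it suffices to compute the right-hand side, for which the Nikolaus--Scholze presentation of $\tc$ as an equalizer of the canonical and Frobenius maps on $\thh$ is available. Second, the $\mathbb{G}_m$-action on $A$ giving $x$ weight $b$ and $y$ weight $a$ makes $y^a - x^b$ homogeneous of weight $ab$; this grading descends to $\thh$ and $\tc$ and is compatible with the cyclotomic Frobenius $\phi_p$, which acts on the grading by $m \mapsto m/p$ (killing the weight piece if $p \nmid m$). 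After splitting off the constant piece I obtain
\[\tc(A,(x,y);\widehat{\ZZ}) \;\simeq\; \bigoplus_{m\geq 1} \tc(A;\widehat{\ZZ})_m,\]
and I can treat each weight piece independently.

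Next, for each weight $m$ I identify $A_m$ as a free $R$-module whose basis corresponds to the monomials $x^i y^j$ with $bi+aj=m$ after normalizing $j<a$ say. Extending the Hesselholt--Nikolaus analysis, $\thh(A,(x,y))_m$ over $\thh(R)$ is controlled by gluing the weight-$m$ parts of $\thh(R[x])$ and $\thh(R[y])$ along the weight-$m$ part of $\thh$ of the common subring generated by the relation, and the genuinely new contribution in each weight is counted by $l(a,b,m)$, the number of interior lattice points $(i,j)\in \ZZ_{\geq 1}^2$ with $ai+bj=m$. This is where the hypothesis $K(R[t];\widehat{\ZZ})\simeq K(R;\widehat{\ZZ})$ enters decisively: via DGM it amounts to $\tc(R[t],(t);\widehat{\ZZ})\simeq 0$, which kills precisely the ``polynomial'' parts associated with the normalization $R[t]$ of $A$, leaving only the universal count $l(a,b,m)$ that appears in characteristic $p$.

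Finally, I assemble the Frobenius tower. For each $m'\in J_p$ the tower of weights $\{m'p^i\}_{i\geq 0}$ is cyclically linked by $\phi_p$, and the resulting equalizer computing $\tc_{2r}$ yields a copy of $W_h(R)$ whose length $h=h(a,b,r,p,m')$ is exactly the largest $s$ with $l(a,b,m'p^{s-1})\leq r$; the piecewise adjustments when $a'\mid m'$ or $b\mid m'$ reflect the corresponding divisibilities in the lattice count. Taking the product over $m'\in J_p$ produces the stated formula, and the big-Witt reformulation $\mathbb{W}_S(R)/(V_a\mathbb{W}_{S/a}(R)+V_b\mathbb{W}_{S/b}(R))$ is a standard reindexing. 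Odd-degree vanishing descends from the corresponding vanishing in each weight piece.

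The main obstacle is the cyclotomic structure on $\thh(R)$ itself when $R$ is only perfectoid rather than a perfect $\FF_p$-algebra: in the $\FF_p$-algebra case $\thh(R)$ is concentrated in even degrees with an essentially identity Frobenius, so the weight-tower bookkeeping is purely combinatorial, whereas in the perfectoid case $\thh(R)$ carries the full Bhatt--Morrow--Scholze Nygaard-twisted Frobenius and the equalizer is more delicate. The hypothesis on $K(R[t])$ is the surgical device that removes exactly the ``smooth'' contributions where these twists could disrupt the combinatorics; once those are gone, the Hesselholt--Nikolaus Frobenius-tower argument transports verbatim and yields the asserted $W_h(R)$.
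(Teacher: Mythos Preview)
Your proposal has a genuine gap at the very first step, and the same error recurs at the third step.

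You invoke Dundas--Goodwillie--McCarthy to obtain $K(A,(x,y);\widehat{\ZZ})\simeq\tc(A,(x,y);\widehat{\ZZ})$, but DGM requires the ideal to be nilpotent, and $(x,y)\subset A$ is not. Likewise, your claim that the hypothesis $K(R[t];\widehat{\ZZ})\simeq K(R;\widehat{\ZZ})$ translates via DGM to $\tc(R[t],(t);\widehat{\ZZ})\simeq 0$ fails for the same reason: $(t)\subset R[t]$ is not nilpotent. In fact the latter vanishing is \emph{false}: even for a perfect $\FF_p$-algebra $k$ one only has that $\tc(k;\ZZ_p)\to\tc(k[t];\ZZ_p)$ is $(-2)$-truncated (there is a nontrivial contribution in degree $-1$), and for mixed-characteristic perfectoid $R$ the relative $\tc(R\langle t\rangle,(t);\ZZ_p)$ is nonzero in every odd nonnegative degree. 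So the ``surgical device'' you describe does not exist, and the polynomial contributions cannot be removed in $\tc$ this way.

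The paper circumvents this by never comparing $K$ and $\tc$ relative to the ideal $(x,y)$. Instead it uses the cdh/abstract blowup square for the normalization map $A\to R[t]$: the Land--Tamme theorem gives a pullback square in $K^{inv}$, and since the other leg $R\to R[t]/(t^{\min(a,b)})$ is a nilpotent extension, DGM applies \emph{there}. The upshot is a pullback square identifying the fiber of $K(A)\to K(R[t])$ with the fiber of $\tc(A)\to\tc(R[t])$. One then computes $\tc(A,R[t];\ZZ_p)$ (not $\tc(A,(x,y);\ZZ_p)$) via the pointed-monoid decomposition $\thh(A,R[t])\simeq\thh(R)\wedge\Sigma^{-1}B^{cy}(\Pi_t/\Pi_{t^a,t^b})$ and Angeltveit's equivariant identification of the weight pieces; this is where the combinatorics you sketch actually live. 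The hypothesis $K(R[t];\widehat{\ZZ})\simeq K(R;\widehat{\ZZ})$ is used only at the end, purely on the $K$-theory side, to identify $K(A,(x,y);\widehat{\ZZ})$ with $K(A,R[t];\widehat{\ZZ})$; it is never converted into a statement about $\tc$.
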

In particular taking $R$ to be a perfect $\mathbb{F}_p$ algebra we recover the results of \cite{Hesselholt_Nikolaus}. Our argument follows that of \cite{Hesselholt_Nikolaus} closely, replacing the relevant fact about perfect $\FF_p$ algebras with results from \cite{BMS1, BMS2, Me}. For a discussion of the generalized Witt vectors showing up here, see \parencite[Section 1]{Hesselholt_big_drw_cplx}.

A complication which does not show up in the characteristic $p$ cases is what the appropriate cuspidal curve to take is. For $k$ a ring of characteristic $p$, $A$ is $p$-complete and so the cuspidal curve over $\mathrm{Spec}(k)$ and $\mathrm{Spf}(k)$ are the same. On the other hand, for $R$ a general perfectoid ring they will be quite different, and in many ways it is much more natural to consider $\mathrm{Spf}(R)$ than $\mathrm{Spec}(R)$. We also compute the relative $K$-theory for $A^\wedge_p$, which turns out to give a similar answer. In the $p$-complete case we need a replacement for the assumption of homotopy invariance. We are able to produce such a result as long as we are willing to work over $\ZZ_p^{cycl}:=\ZZ_p[\zeta_{p^\infty}]^\wedge_p$. In order to get this computation we first need to compute the algebraic $K$-theory of the formal affine line. Recall that for an abelian group $G$ the Tate module of $G$ is the group $T_pG:=\hom(\ZZ/p^\infty, G)$.

\begin{thm}\label{thm: Formal affine invariance}
Let $R$ be a perfectoid $\ZZ_p^{cycl}$-algebra, and let $R_0:=R/R[p]$. Let $R\langle t\rangle:= R[t]^\wedge_p$. Then there are isomorphisms \[K_{2r}(R\langle t\rangle, (t);\widehat{\ZZ})\cong 0\] and  \[K_{2r+1}(R\langle t\rangle, (t);\widehat{\ZZ})\cong T_p(\pic(R_0\langle t\rangle))\oplus \left(1+\sqrt{pR_0}\langle t\rangle \right)^\wedge_p\] for $r\geq 0$, where $1+\sqrt{pR_0}\langle t\rangle \subseteq R_0\langle t\rangle$ is the subset of power series with constant term $1$ and all other coefficients in the radical of $p$.
\end{thm}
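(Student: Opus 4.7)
The plan is to split the profinite completion $\widehat{\mathbb{Z}} \cong \mathbb{Z}_p \times \prod_{\ell \neq p}\mathbb{Z}_\ell$ and treat each factor separately. The pair $(R\langle t\rangle, (t))$ is henselian since $R\langle t\rangle$ is $t$-adically complete, and $\ell$ is invertible in $R$ for every $\ell \neq p$ because $R$ is $p$-complete. By Gabber-Suslin rigidity for henselian pairs (in the form of Clausen-Mathew-Morrow), one has $K(R\langle t\rangle, (t); \mathbb{Z}/\ell) = 0$, killing the $\ell$-complete contribution for every $\ell \neq p$ and reducing the theorem to the $p$-completed relative $K$-theory.

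\textbf{Reduction to syntomic cohomology.} For the $p$-complete part, the $p$-complete Clausen-Mathew-Morrow rigidity theorem gives
\[K(R\langle t\rangle, (t); \mathbb{Z}_p) \xrightarrow{\sim} \mathrm{TC}(R\langle t\rangle, (t); \mathbb{Z}_p).\]
Since $R\langle t\rangle$ is $p$-completely smooth over the perfectoid (hence quasisyntomic) ring $R$, it is itself quasisyntomic, and the Bhatt-Morrow-Scholze motivic filtration on $\mathrm{TC}(-;\mathbb{Z}_p)$ applies. Its associated graded is $\mathbb{Z}_p(i)(R\langle t\rangle)[2i]$, so I am reduced to computing the relative syntomic complexes $\mathbb{Z}_p(i)(R\langle t\rangle)/\mathbb{Z}_p(i)(R)$ for each $i\geq 0$.

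\textbf{Computing the relative syntomic cohomology.} Using that $R$ is a perfectoid $\mathbb{Z}_p^{cycl}$-algebra so that Fontaine's element $\mu=\zeta_p-1$ lies in $\mathbb{A}_{\mathrm{inf}}(R)$, the relative prismatic cohomology $\Delta_{R\langle t\rangle/R}$ together with its Nygaard filtration and Frobenius admits an explicit description. Taking the fiber of $\mathrm{can}-\phi$ produces relative syntomic cohomology concentrated in cohomological degrees $\leq 2$, which forces the even homotopy groups to vanish. For each weight the surviving pieces live in $H^1$ and $H^2$, and a uniform description in the weight direction makes the answer independent of $r$. The $H^2$-contribution is identified with $T_p(\mathrm{Pic}(R_0\langle t\rangle))$ via the standard relation between weight-one syntomic cohomology in degree $2$ and the Tate module of Picard, while the $H^1$-contribution is the group of Frobenius-fixed formal units in $1+\mathcal{N}^{\geq 1}\Delta_{R\langle t\rangle/R}$, which I would then match with $(1+\sqrt{pR_0}\langle t\rangle)^\wedge_p$.

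\textbf{Main obstacle.} The delicate step is this last identification: the appearance of $\sqrt{p}$ rather than $p$ reflects that $p$-power roots of formal units become available once one has $\zeta_{p^\infty}\in R$, and the passage to $R_0=R/R[p]$ is precisely what removes the $p$-torsion in the perfectoid base which would otherwise obstruct a clean description. Carefully tracking this through the Frobenius on the relative prism, building on the techniques of \cite{Me}, is the core technical step.
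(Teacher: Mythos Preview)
Your overall architecture—split primes, reduce to $\tc$, run the BMS filtration—matches the paper, but the very first reduction step contains a genuine error. You claim $(R\langle t\rangle,(t))$ is henselian because $R\langle t\rangle$ is $t$-adically complete; it is not. By definition $R\langle t\rangle=R[t]^\wedge_p$ is only $p$-adically complete, and $t$ is not even in the Jacobson radical: $1-t$ is not a unit, since $\sum_{n\geq 0}t^n\notin R\langle t\rangle$. Consequently neither your $\ell\neq p$ rigidity step nor your identification $K(R\langle t\rangle,(t);\ZZ_p)\simeq\tc(R\langle t\rangle,(t);\ZZ_p)$ is justified. The paper instead applies rigidity along the ideal $(p)$, where $R\langle t\rangle$ genuinely is complete, reducing to $k[t]$ with $k=(R/p)_{\mathrm{red}}$ a perfect $\FF_p$-algebra; the $\ell\neq p$ case then finishes by Weibel's $\mathbb{A}^1$-invariance, and the $p$-adic case by first \emph{computing} that $\tc(k;\ZZ_p)\to\tc(k[t];\ZZ_p)$ is $(-2)$-truncated (Lemma~\ref{lem: homotopy invariance for perfect fp algebras}), which together with the pullback square yields $K(R\langle t\rangle;\ZZ_p)\simeq\tau_{\geq 0}\tc(R\langle t\rangle;\ZZ_p)$.

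The second gap is the $2$-periodicity in $r$, which you gloss as ``a uniform description in the weight direction''. This is exactly where the $\ZZ_p^{cycl}$-hypothesis does work: the paper builds a Bott element $\beta\in\tc_2(R;\ZZ_p)$ from $\mu=[\epsilon]-1$ and proves that multiplication by $\beta$ is an isomorphism on $N\tc_*(R;\ZZ_p)$ in degrees $\geq 0$ (Proposition~\ref{prop: Even vanishing}). That argument—and the even vanishing it gives—runs through an explicit description of $N\tc^-$ and $N\tp$ obtained from the decomposition $\thh(R[t])\simeq\thh(R)\wedge\Sigma^\infty B^{cy}(\Pi_t)$ (Lemma~\ref{lem: tc- and tp for affine line}); the cohomological bounds on $\DD_{R\langle t\rangle/A_{inf}(R)}$ and its Nygaard pieces are then \emph{deduced} from those spectrum-level computations, not established directly as your sketch suggests. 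Two smaller corrections: both summands in the odd groups come from $H^1(N\ZZ_p(1))$ via the short exact sequence for the derived $p$-completion of $\mathbb{G}_m$ (Corollary~\ref{cor: odd k group calculation}), not from $H^1$ and $H^2$ separately; and the passage from $R$ to $R_0$ is a Milnor-square excision argument (Proposition~\ref{prop: pullback ptf and charp}) used to \emph{reduce} to the $p$-torsion-free case before any computation, not a cosmetic cleanup at the end.
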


\begin{rem}
The proof of this statement uses the Henselian ridgidity properties of nil invariant $K$-theory proven in \cite{Clausen_Mathew_Morrow} to reduce to the same statement for topological cyclic homology, and then use the syntomic complexes $\ZZ_p(i)(R\langle t\rangle)$ from \cite{BMS2} and the $ku^\wedge_p$-module structure from \cite[Example 5.5]{k1_local_tr}. Elmanto has shown in \cite{Elden} that we should not expect affine invariance for topological cyclic homology. Note that $pt\in 1+\sqrt{pR_0}\langle t\rangle$ is not in the image of the multiplication by $p$ map\footnote{the group structure here is multiplication, so the multiplication by $p$ map is taking $p$-powers.} and so $(1+\sqrt{pR_0}\langle t\rangle)^\wedge_p$ is not zero. While this does match with Elmanto's result, it is interesting that this differs from the positive characteristic case where the relative $\tc$-groups vanish is nonnegative dimension, see Lemma~\ref{lem: homotopy invariance for perfect fp algebras}.
\end{rem}

\begin{rem}
For simplicity we have written the odd $K$-groups above as a direct sum. While this is true it is also slightly misleading since the decomposition is not natural. There is in fact a natural short exact sequence with these terms which is splittable, see Corollary~\ref{cor: odd k group calculation} for the precise statement.
\end{rem}
With this computation we are able to apply the work we did in the case of $A$ to the case of $A^\wedge_p$ in the case when we are working over $\ZZ_p^{cycl}$.
\begin{thm}\label{thm: main computation p-completed}
Let $R$ be a perfectoid $\ZZ_p^{cycl}$-algebra and $R_0:=R/R[p]$. Then for $r\geq 0$ we have an exact sequence \[
\begin{tikzcd}
0 \arrow[r] & {K_{2r+1}(A^\wedge_p,(x,y);\widehat{\ZZ})} \arrow[r]                          & T_p(\pic(R_0\langle t\rangle))\oplus \left(1+\sqrt{pR_0}\langle t\rangle \right)^\wedge_p \arrow[ld, out=-10, in=170] &   \\
            & \mathbb{W}_S(R)/(V_{a}\mathbb{W}_{S/a}(R)+V_{b}\mathbb{W}_{S/b}(R)) \arrow[r] & {K_{2r}(A^\wedge_p, (x,y);\widehat{\ZZ})} \arrow[r]                       & 0
\end{tikzcd}
\] where the map from the top to bottom rows comes from the boundary map in an exact triangle from the octahedron axiom.
\end{thm}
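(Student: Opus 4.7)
The plan is to produce a cofiber sequence of profinitely completed relative $K$-theory spectra,
\[ K(A,(x,y);\widehat{\ZZ}) \longrightarrow K(A^\wedge_p,(x,y);\widehat{\ZZ}) \longrightarrow K(R\langle t\rangle,(t);\widehat{\ZZ}), \]
and then read off the desired four-term exact sequence from its long exact sequence in homotopy, using the parity vanishing provided by Theorems~\ref{thm: main computation} and~\ref{thm: Formal affine invariance}.

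To set up the cofiber sequence I would start from the normalization map $A\hookrightarrow R[t]$, $x\mapsto t^a$, $y\mapsto t^b$. Since $a,b$ are coprime, the conductor ideal $c=t^{(a-1)(b-1)}R[t]$ lies in $A$ and presents $A$ as the pullback $A\cong R[t]\times_{R[t]/c}A/c$. The quotients $A/c$ and $R[t]/c$ are finite $R$-modules, hence already $p$-complete, so $p$-completing alters only the top row and produces a second Milnor square $A^\wedge_p\cong R\langle t\rangle\times_{R[t]/c}A/c$. Applying Clausen--Mathew--Morrow henselian rigidity to replace relative $K(-;\widehat{\ZZ})$ by relative $\tc(-;\widehat{\ZZ})$, together with the resulting nil-invariant excision, both Milnor squares yield cartesian squares on $K(-;\widehat{\ZZ})$. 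Comparing the two cartesian squares via the $p$-completion map and invoking the octahedron axiom on the resulting $3\times 3$ diagram, the $A/c$ and $R[t]/c$ entries cancel, so that the cofiber of
\[ K(A,(x,y);\widehat{\ZZ})\longrightarrow K(A^\wedge_p,(x,y);\widehat{\ZZ}) \]
is identified with the cofiber of $K(R[t],(t);\widehat{\ZZ})\to K(R\langle t\rangle,(t);\widehat{\ZZ})$. The affine-line invariance hypothesis of Theorem~\ref{thm: main computation}, which is satisfied for any perfectoid $\ZZ_p^{cycl}$-algebra since $\ZZ_p^{cycl}$ is a perfectoid valuation ring, collapses the first term to zero, so this cofiber is precisely $K(R\langle t\rangle,(t);\widehat{\ZZ})$, producing the cofiber sequence above.

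Once the cofiber sequence is established, the long exact sequence in homotopy groups combined with the vanishing of the odd groups of $K(A,(x,y);\widehat{\ZZ})$ and of the even groups of $K(R\langle t\rangle,(t);\widehat{\ZZ})$ immediately collapses to the stated four-term exact sequence, with the middle connecting map being exactly the boundary map of the cofiber triangle — the octahedron boundary referenced in the statement. The main obstacle I anticipate is not the parity bookkeeping but the verification that the conductor Milnor squares really do become cartesian after applying $K(-;\widehat{\ZZ})$: ordinary Milnor patching fails for $K$-theory in general, so this step genuinely relies on the recent rigidity and excision results of \textcite{Clausen_Mathew_Morrow} (and related Land--Tamme-style input) applied in the relative Henselian setting. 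Granted that excision, the octahedron manipulation and the LES extraction are formal.
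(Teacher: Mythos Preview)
Your approach is correct and arrives at the same four-term exact sequence, but the route is genuinely different from the paper's.

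The paper never compares $A$ with $A^\wedge_p$. Instead it applies the octahedron directly to the factorization $A^\wedge_p\to R\langle t\rangle\to R$, obtaining the fiber sequence
\[
K(A^\wedge_p, R\langle t\rangle;\ZZ_p)\longrightarrow K(A^\wedge_p,(x,y);\ZZ_p)\longrightarrow K(R\langle t\rangle,(t);\ZZ_p),
\]
with the left-hand term identified as $\tc(A,R[t];\ZZ_p)$ via the pullback square~\eqref{eqn: main pullback square} and then computed in Section~\ref{sec: tc calculation}. Primes $l\neq p$ are treated separately by Gabber rigidity and a reduction to the characteristic~$p$ case. Your conductor-square comparison instead produces
\[
K(A,(x,y);\widehat{\ZZ})\longrightarrow K(A^\wedge_p,(x,y);\widehat{\ZZ})\longrightarrow K(R\langle t\rangle,(t);\widehat{\ZZ}),
\]
and feeds Theorem~\ref{thm: main computation} in as a black box for the first term. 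The two first terms are in fact equivalent spectra (both identify with $\tc(A,R[t];\widehat{\ZZ})$), so the resulting long exact sequences coincide. What your packaging buys is a uniform treatment of all primes and a cleaner conceptual statement (the $p$-completed cusp differs from the uncompleted one exactly by the formal affine line); what it costs is that you must already have Theorem~\ref{thm: main computation} in hand, whereas the paper's argument is self-contained given the $\tc$ computation.

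One small correction: the excision step for the conductor squares is not a consequence of Clausen--Mathew--Morrow rigidity. The conductor ideals $c\subseteq A$ and $c\subseteq R[t]$ do not define Henselian pairs, so CMM does not apply. What you actually need is that both conductor squares are abstract blowup (hence cdh) squares, so that \cite[Theorem~E]{Land_Tamme} gives cartesianness for $K^{inv}$, together with the well-known Milnor excision for $\tc$; combining these yields cartesianness for $K(-;\widehat{\ZZ})$. The paper already invokes exactly this Land--Tamme input in Section~\ref{sec: main pullback squares}, so no new ingredient is required---just a more accurate citation.
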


\begin{rem}
In order to get a computation of $K(A,(x,y))$ and $K(A^\wedge_p, (x,y))$ without the coefficients it remains to study the rationalization of $K$-theory. Unfortunately $(x,y)\subseteq A$ is not nilpotent, and so the usual simplification of the rational theory is unavailable. That being said, for the uncompleted case we will see in Section~\ref{sec: main pullback squares} that we still rationally have a pullback square comparing $K(A)_{\QQ}$ with $K(R[t])_\QQ$ and $\tc(A,R[t])_{\QQ}$. It would be interesting to know how close the latter term is to $\tc(A_{\QQ},R[t]_{\QQ})$. From \parencite[Theorem 2.1]{angeltveit2019picard}, recorded in this paper as Theorem~\ref{thm: Bm as a T spectrum}, we will see that on topological Hochschild homology this is made up of wedge sum of induced actions. Hence $\thh(A,R[t])$ will have a meromorphic Frobenius in the sense of \parencite[Definition 5.17.2]{Raskin} and the weight filtration is a candidate for \parencite[Corollary 5.18.7]{Raskin}.
\end{rem}

\textbf{Acknowledgements.}
I would like to thank Benjamin Antieau, Ayelet Lindenstrauss, Michael Mandell, Akhil Mathew, Matthew Morrow, Martin Speirs, and Yuri Sulyma for helpful conversations on the material in this paper. I would also like to thank Benjamin Antieau, Elden Elmanto, and Peter Marek for reading an earlier draft and for many helpful comments.

\section{The main pullback squares}\label{sec: main pullback squares}

In this section we will explain how to reduce the $K$-theory computations in Theorems~\ref{thm: main computation} and Theorem~\ref{thm: main computation p-completed} to calculations in topological cyclic homology. By the recent advances in topological cyclic homology, especially of perfectoid rings, produced in \cite{Nikolaus_Scholze}, \cite{BMS1}, and \cite{BMS2} we are then able to do these topological cyclic homology computations in later sections. For the reduction to topological cyclic homology we will follow the general strategy of \cite{Hesselholt_Nikolaus}.

To this end, consider for any ring $B$ the following square
\[
\begin{tikzcd}
{\mathrm{Spec}(B[t]/(t^{\min(a,b)}))} \arrow[d] \arrow[r] & {\mathrm{Spec}(B[t])} \arrow[d] \\
\mathrm{Spec}(B) \arrow[r]                                & \mathrm{Spec}(B[x,y]/(y^a-x^b))                              
\end{tikzcd}
\]
where the map $\mathrm{Spec}(B)\to \mathrm{Spec}(B[x,y]/(y^a-x^b))$ is the quotient map $x,y\mapsto 0$, and the map $\mathrm{Spec}(B[t])\to \mathrm{Spec}(B[x,y]/(y^a-x^b))$ is the inclusion $x\mapsto t^a$, $y\mapsto t^b$.

The square above is cartesian, the map $\mathrm{Spec}(B)\to \mathrm{Spec}(B[x,y]/(y^a-x^b))$ is a closed immersion, and the map $\mathrm{spec}(B[t])\to \mathrm{Spec}(B[x,y]/(y^a-x^b))$ is proper and an isomorphism away from $\mathrm{Spec}(B)$. In other words the square is a cdh-square. We may now apply \parencite[Theorem E]{Land_Tamme} to get a pullback square
\[
\begin{tikzcd}
K^{inv}(B[x,y]/(y^a-x^b)) \arrow[d] \arrow[r] & K^{inv}(B)\arrow[d]\\
K^{inv}(B[t]) \arrow[r] & K^{inv}(B[t]/(t^{\min\{a,b\}}))
\end{tikzcd}
\]
for all rings $B$. The map $K^{inv}(B)\to K^{inv}(B[t]/(t^{\min\{a,b\}}))$ is split by the map which sends $t\mapsto 0$ which is an equivalence by the Dundas Goodwillie McCarthy theorem \cite[Theorem 7.0.0.1]{Dundas_Goodwillie_McCarthy}. Thus the right vertical map in the above pullback square is an equivalence and therefore so too is the right hand vertical map. In particular we get a pullback square 
\[
\begin{tikzcd}
K(B[x,y]/(y^a-x^b))\arrow[d]\arrow[r] & \tc(B[x,y]/(y^a-x^b))\arrow[d]\\
K(B[t])\arrow[r] & \tc(B[t])\\
\end{tikzcd}
\]
for all rings $B$.

For $B=R$ taking the profinite completion of the above pullback square gives the desired translation for Theorem~\ref{thm: main computation}. For Theorem~\ref{thm: main computation p-completed} we will handle completion at $p$ and at $n\in J_p$ separately and will only need to reduce to a topological cyclic homology in the first case. For this notice that taking $B=R/p^k$ then gives a pullback square 
\[
\begin{tikzcd}
K(R/p^k[x,y]/(y^a-x^b);\ZZ_p)\arrow[d]\arrow[r] & \tc(R/p^k[x,y]/(y^a-x^b);\ZZ_p)\arrow[d]\\
K(R/p^k[t];\ZZ_p)\arrow[r] & \tc(R/p^k[t];\ZZ_p^k)\\
\end{tikzcd}
\]
for all $k\in \NN$. Taking the limit as $k\to \infty$ and using \cite[Theorem A]{Clausen_Mathew_Morrow} then gives a pullback square
\begin{equation}\label{eqn: main pullback square}
\begin{tikzcd}
K(A^\wedge_p;\ZZ_p)\arrow[d]\arrow[r] & \tc(A^\wedge_p;\ZZ_p)\arrow[d]\\
K(R\langle t\rangle;\ZZ_p)\arrow[r] &\tc(R\langle t\rangle;\ZZ_p)
\end{tikzcd}
\end{equation}
where as in Theorem~\ref{thm: Formal affine invariance} $R\langle t\rangle:= R[t]^\wedge_p$.

For the pullback square 
\[
\begin{tikzcd}
K(A;\widehat{\ZZ})\arrow[d]\arrow[r] & \tc(A;\widehat{\ZZ})\arrow[d]\\
K(R[t];\widehat{\ZZ})\arrow[r] &\tc(R[t];\widehat{\ZZ})\\
\end{tikzcd}
\]
\cite[Theorem 1.5]{AMM} shows that for $R$ a perfectoid algebra over a perfectoid valuation ring $K(R;\ZZ_p)\to K(R[t];\ZZ_p)$ will be an isomorphism. Combined with results of Weibel this is enough to show that the left vertical fiber is what we are trying to compute. We go into more detail about this reduction in Section~\ref{sec: final computation}. For Theorem~\ref{thm: main computation p-completed} the square above is sufficient once we have Theorem~\ref{thm: Formal affine invariance}. Theorem~\ref{thm: Formal affine invariance} will also be proven by reducing to a topological cyclic homology computation, but this is a slightly different reduction than above and will be done at the beginning of Section~\ref{sec: affine line}.

\section{The topological cyclic homology computation}\label{sec: tc calculation}

In this section we will compute the right hand side of Diagram~\ref{eqn: main pullback square}. Since everything in sight has bounded $p^\infty$-torsion this computation will work in both cases of interest. While we do end up computing $\tc(R\langle t\rangle;\ZZ_p)$ (at least in high degrees) in Subsection~\ref{ssec: tc affine line general}, it will be more convenient for the final computation to compute the fiber term $\tc(A,R[t];\ZZ_p)$ instead of computing $\tc(A;\ZZ_p)$ and using the long exact sequence on homotopy groups. We will be following the arguments in \cite{Hesselholt_Nikolaus} closely.

In order to do this we first recall the simplification to topological cyclic homology computations produced by Nikolaus and Scholze.
\begin{thm}[\cite{Nikolaus_Scholze}, Theorem II.4.11]\label{thm: ns main result}
Let $X$ be a genuine cyclotomic spectrum which has a bounded below underlying spectrum. Then there is a functorial equivalence \[\tc(X)\simeq \mathrm{fib}\left(\tc^{-}(X)\xrightarrow{(\phi_p-can)_p}\prod_{p\in \mathbb{P}}\tp(X;\ZZ_p)\right)\]
\end{thm}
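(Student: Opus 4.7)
The plan is to follow the strategy of Nikolaus and Scholze. First I would reduce to the $p$-typical setting prime by prime: the goal becomes showing, for each prime $p$, that $\tc(X;\ZZ_p)\simeq \mathrm{fib}(\tc^-(X;\ZZ_p)\xrightarrow{\phi_p-\mathrm{can}} \tp(X;\ZZ_p))$. The integral statement follows by an arithmetic fracture, using that for a bounded below $T$-spectrum the map $\tc^-(X)\to \tp(X;\ZZ_p)$ is rationally nullhomotopic since $\tp$ vanishes rationally on bounded below inputs; thus the fiber of $(\phi_p-\mathrm{can})_p$ assembles the $p$-typical $\tc$'s together with a rational piece coming from $\tc^-$, exactly mirroring the classical arithmetic decomposition of $\tc$.

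The core technical input is the Tate orbit lemma: for a bounded below $T$-spectrum $X$ and a prime $p$, the iterated Tate construction $(X_{hC_p})^{tC_p}$ vanishes. Granting this, one obtains inductively that the genuine $C_{p^n}$-fixed point spectrum $X^{C_{p^n}}$ of a bounded below cyclotomic spectrum can be reconstructed from iterated homotopy fixed points and Tate constructions alone, i.e. entirely from the underlying $T$-equivariant Borel data and the cyclotomic Frobenius. This is the mechanism by which the genuine equivariant input is dispensed with in the Nikolaus-Scholze definition.

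Using this reconstruction, I would rewrite classical $p$-typical $\tc$ — defined as the homotopy equalizer of the Frobenius $F$ and restriction $R$ maps on the tower $\{X^{C_{p^n}}\}_n$ — in purely Borel/Tate terms. The tower telescopes: $\lim_n X^{C_{p^n}}$ becomes $\tc^-(X;\ZZ_p)$, the Frobenius on this limit becomes $\phi_p$ landing in $\tp(X;\ZZ_p)$ (using that $\tp$ is already $p$-complete on bounded below inputs), and the restriction becomes the canonical map $\mathrm{can}$. The equalizer then collapses to the claimed fiber of $\phi_p-\mathrm{can}$.

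The main obstacle is the Tate orbit lemma itself. Its proof requires a careful convergence analysis of the Tate spectral sequence; the bounded belowness hypothesis is crucial, as the lemma genuinely fails without it, and this is the source of the bounded below assumption in the statement. Once the Tate orbit lemma is in hand, the remaining identifications are formal manipulations with the Borel tower, but verifying that the multiplicative and cyclotomic structures all match up under the reconstruction is the other place where some care is required.
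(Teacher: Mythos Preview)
The paper does not give a proof of this statement; it is quoted as \parencite[Theorem II.4.11]{Nikolaus_Scholze} and only the definitions of the maps $can$ and $\phi_p$ are recalled afterward. Your outline is a faithful summary of the original Nikolaus--Scholze argument: the Tate orbit lemma is the essential technical input, the inductive reconstruction of the genuine $C_{p^n}$-fixed points from Borel data follows from it, and the identification of the resulting tower with $\tc^{-}$ together with the Frobenius and canonical maps is exactly how the $p$-typical formula (their Proposition II.1.9) is obtained.

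One point to tighten: the assembly across primes is not literally via ``$\tp$ vanishes rationally on bounded below inputs''---$X^{t\TT}$ need not be rationally trivial. What Nikolaus--Scholze actually use is that for bounded below $X$ the natural map $X^{t\TT}\to \prod_p (X^{t\TT})^\wedge_p$ is an equivalence (their Lemma II.4.9, a consequence of the Tate orbit lemma applied at all primes), so the target $\prod_p\tp(X;\ZZ_p)$ is already $X^{t\TT}$ itself and no separate rational fracture is needed. With that correction your sketch matches the source.
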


Here the map $can$ is the composition \[X^{h\TT}\simeq \left(X^{hC_p}\right)^{h\TT/C_{p}}\to\left(X^{tC_p}\right)^{h\TT/C_p}\simeq \left(X^{t\TT}\right)^\wedge_p\] where the middle map is the homotopy $\TT/C_p$-fixed points of the map $X^{hC_p}\to X^{tC_p}$ which define the Tate construction and the last equivalence is \parencite[Lemma II.4.2]{Nikolaus_Scholze}. By Using the universal properties of the canonical maps discussed in \parencite[Theorem I.4.1]{Nikolaus_Scholze} and of $p$-completion this can be shown to agree with the map $X^{h\TT}\to X^{t\TT}$ followed by the $p$-completion map. The Frobenius map $\phi_p$ is defined as the composition \[X^{h\TT}\xrightarrow{(\phi_p^X)^{h\TT}}\left(X^{tC_p}\right)^{h\TT/C_p}\simeq \left(X^{t\TT}\right)^\wedge_p\] where $\phi_p^X:X\to X^{tC_p}$ is the $\TT\cong \TT/C_p$-equivariant Frobenius map given by the definition of cyclotomic spectra. 

For $X\to Y$ a map of bounded below cyclotomic spectra we can calculate $\tc(X,Y)$ using the same fiber sequences and definitions as above since taking the cofiber of $X\to Y$ will commute with all limits and colimits. In Subsection~\ref{ssec: relative thh} we describe this term equivariantly and also describe the canonical and Frobenius maps in terms of the canonical and Frobenius maps on $\thh(R;\ZZ_p)$. In Subsection~\ref{ssec: relative tc- and tp} we compute the topological negative cyclic homology and periodic homology from the equivariant description from Subsection~\ref{ssec: relative thh}. Finally we put these pieces together to get the relevant topological cyclic homology computation in Subsection~\ref{ssec: relative tc}.

\subsection{The relative topological Hochschild homology}\label{ssec: relative thh}

We will mostly be using formal properties of topological Hochschild homology to do these computations. First note that as a functor $\thh(-):\mathrm{Alg}_{\mathbb{E}_1}(\Sp)\to \mathrm{CycSp}$ is symmetric monoidal by \parencite[Section IV.2]{Nikolaus_Scholze}. In particular if we have a pointed monoid $\Pi$ then \[\thh(R(\Pi))\simeq \thh(R\wedge \mathbb{S}(\Pi))\simeq \thh(R)\wedge \Sigma^{\infty}B^{cy}(\Pi)\] where $B^{cy}(\Pi)$ is the geometric realization of the simplicial space $B_n^{cy}(\Pi)=\Pi^{\wedge n+1}$ and face maps given by the usual formula in the Hochschild complex. We refer the reader to \parencite[Sections 3.1 and 3.2]{Speirs_truncated_polynomials} for a nice review of this decomposition. In particular recall that by the definition of the symmetric monoidal product on the target the canonical map and the Frobenius maps will decompose into the tensor product of the corresponding maps followed by the map witnessing that $(-)^{tC_p}$ is lax-monoidal. For example in diagrams this means that \[\begin{tikzcd}
\thh(R)\wedge \Sigma^\infty B^{cy}(\Pi) \arrow[rrdd, "\phi_p", bend right] \arrow[rr, "\phi_p^{\thh(R)}\wedge id"] &  & \thh(R)^{tC_{p}}\wedge \Sigma^\infty B^{cy}(\Pi) \arrow[d, "id\wedge \phi_p^{\thh(\mathbb{S}(\Pi))}"] \\
                                                                                                                   &  & \thh(R)^{tC_p}\wedge \left(\Sigma^{\infty}B^{cy}(\Pi)\right)^{tC_{p}} \arrow[d, "l"]                  \\
                                                                                                                   &  & \left(\thh(R)\wedge \Sigma^{\infty}B^{cy}(\Pi)\right)^{tC_{p}}                                       
\end{tikzcd}\]
commutes.

In the case at hand, we have that $R[t]\cong R\wedge \Pi_t$ where $\Pi_t=\{0, t, t^2,\ldots\}$ is the free pointed monoid on one generator, and $A\cong R\wedge \Pi_{t^a, t^b}$ where $\Pi_{t^a,t^b}=\{0, t^a, t^b, t^{a+b}, \ldots\}$ is the submonoid of $\Pi_t$ generated by $t^a$ and $t^b$. Thus \[\thh(A,R[t])\simeq \thh(R)\wedge \mathrm{fib}\left(\Sigma^{\infty}B^{cy}(\Pi_{t^a,t^b})\to \Sigma^{\infty}B^{cy}(\Pi_t)\right)\simeq \thh(R)\wedge \Sigma^{\infty -1} B^{cy}(\Pi_t/\Pi_{t^a,t^b})\] where the second equivalence comes from switching the fiber to a cofiber and computing the quotient level-wise since in each simplicial degree the induced map is injective.

To study this space, we will use a decomposition used in \cite{angeltveit2019picard, Hesselholt_Nikolaus} for their computations. Specifically let $B_m$ be the subspace of $B^{cy}(\Pi_t/\Pi_{t^a,t^b})$ given in simplicial degree $n$ by the terms $x_0\wedge \ldots \wedge x_n$ with $\sum_{i=0}^n \deg(x_i)=m$. The following was conjectured by Hesselholt in \parencite[Conjecture B]{Hesselholt_cusps} and was recently proven by Angeltveit. The statement in \cite{angeltveit2019picard} is stronger than what is stated below since they need a genuine description of the $\TT$ action. We only need the Borel equivariant version of their statement, and so will only record that here. Recall that $\CC(n)$ is defined to be the $\TT$ representation on $\CC$ given by $z$ acting by multiplication by $z^n$.
\begin{thm}[Theorem 2.1, \cite{angeltveit2019picard}]
There is a Borel $\TT$-equivariant identification of $\Sigma^\infty B_m$ with the total cofiber of the square 
\[\begin{tikzcd}\label{thm: Bm as a T spectrum}
{S^{\lambda(a,b,m)}\wedge (\TT/C_{m/ab})_+} \arrow[r] \arrow[d] & {S^{\lambda(a,b,m)}\wedge (\TT/C_{m/b})_+} \arrow[d] \\
{S^{\lambda(a,b,m)}\wedge (\TT/C_{m/a})_+} \arrow[r]            & {S^{\lambda(a,b,m)}\wedge (\TT/C_{m})_+}            
\end{tikzcd}\]
where the maps are all the canonical projections, $\TT/C_{m/c}:=\emptyset$ if $c\nmid m$, and $\lambda(a,b,m)$ the $\TT$ representation $\bigoplus_{n\in (cm/a, dm/b)\cap \ZZ}\CC(n)$ where $c$ and $d$ are some integers such that $ad-bc=1$.
\end{thm}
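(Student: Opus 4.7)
The plan is to exhibit the claimed total cofiber decomposition through an explicit cellular analysis of the cyclic bar construction on each sub pointed monoid, followed by a Bezout-style identification of the common representation $\lambda(a,b,m)$. As a building block I would first show that for any positive integer $c$ dividing $m$, the weight-$m$ piece $\Sigma^\infty B^{cy}(\Pi_{t^c})_m$ is Borel $\TT$-equivariantly a smash product $S^{V(c,m)}\wedge (\TT/C_{m/c})_+$ for an explicit $\TT$-representation $V(c,m)$, and is contractible when $c \nmid m$. This follows from identifying the non-degenerate simplices of $B^{cy}(\Pi_{t^c})$ with cyclic words in the generator $t^c$, observing that a primitive such word of total weight $m$ has cyclic automorphism group $C_{m/c}$, and computing the transverse representation from the $C_{m/c}$-action on the tangent space at the basepoint of the associated representation sphere.

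Next I would consider the square of sub pointed monoid inclusions
\[
\begin{tikzcd}
\Pi_{t^{ab}}\arrow[r]\arrow[d] & \Pi_{t^a}\arrow[d]\\
\Pi_{t^b}\arrow[r] & \Pi_t
\end{tikzcd}
\]
and apply $\Sigma^\infty B^{cy}(-)_m$. Since $B_m$ is the weight-$m$ summand of $\mathrm{cofib}(B^{cy}(\Pi_{t^a,t^b})\to B^{cy}(\Pi_t))$, it suffices to identify $\Sigma^\infty B^{cy}(\Pi_{t^a,t^b})_m$ with the pushout of the other three corners. Here coprimality is essential: the hypothesis $\gcd(a,b)=1$ gives $\Pi_{t^a}\cap \Pi_{t^b}=\Pi_{t^{ab}}$, and each cyclic word in $\Pi_{t^a,t^b}$ of weight $m$ decomposes canonically into contributions from $\Pi_{t^a}, \Pi_{t^b}$ and their overlap, modulo corrections from genuinely mixed entries such as $t^{a+b}$. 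After a careful simplicial Mayer--Vietoris accounting, these mixed cells are precisely the cells that survive into the total cofiber, and they recover $\Sigma^\infty B_m$.

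The main obstacle is the identification of the common representation $\lambda(a,b,m)=\bigoplus_{n\in (cm/a,dm/b)\cap\ZZ}\CC(n)$ on all four corners. A priori the representations $V(c,m)$ for $c\in\{1,a,b,ab\}$ from the first step are different, so one cannot immediately factor a single $S^{\lambda(a,b,m)}$ out of the square. Choosing Bezout integers $c,d$ with $ad-bc=1$ provides the needed uniform reindexing: the characters appearing in each $V(c,m)$, when rewritten via the linear change of variables determined by $(c,d)$, become compatible across all four corners, and the surviving $\TT$-representation in the total cofiber is precisely $\lambda(a,b,m)$. This character enumeration is the delicate technical core: one must count cyclic orbits of mixed words at each weight and verify that the rotation weights of the $\TT$-action on each orbit match the integer points of the interval $(cm/a, dm/b)$.
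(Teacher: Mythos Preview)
The paper does not give its own proof of this statement: it is quoted verbatim as \parencite[Theorem 2.1]{angeltveit2019picard}, introduced as a result that ``was conjectured by Hesselholt in \parencite[Conjecture B]{Hesselholt\_cusps} and was recently proven by Angeltveit.'' So there is no in-paper argument to compare against; the relevant comparison is with Angeltveit's proof, which proceeds via the Picard group of the $C_m$-equivariant stable category and a careful analysis of the cellular filtration on $B_m$ as a genuine $\TT$-spectrum, not via the Mayer--Vietoris style decomposition you outline.

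Your proposal has a real gap at step 3. You write that ``it suffices to identify $\Sigma^\infty B^{cy}(\Pi_{t^a,t^b})_m$ with the pushout of the other three corners,'' and then dispose of this with ``a careful simplicial Mayer--Vietoris accounting.'' But the monoid $\Pi_{t^a,t^b}$ is not the pushout $\Pi_{t^a}\amalg_{\Pi_{t^{ab}}}\Pi_{t^b}$ in any useful sense: it contains elements like $t^{a+b}$ that are in none of $\Pi_{t^a}$, $\Pi_{t^b}$, $\Pi_{t^{ab}}$, and the cyclic bar construction on these ``mixed'' elements produces cells whose equivariant attaching maps are the entire content of the theorem. Saying that these cells ``are precisely the cells that survive into the total cofiber'' is a restatement of the conclusion, not an argument for it. This identification was open for over a decade as Hesselholt's Conjecture B precisely because no elementary cell-counting argument of this kind was known to work.

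There is a second, related gap at step 4. For $c\in\{1,a,b,ab\}$ the building block $\Sigma^\infty B^{cy}(\Pi_{t^c})_m$ is, up to re-indexing, $\Sigma^\infty_+\TT/C_{m/c}$ with \emph{no} representation-sphere suspension; the spheres $S^{\lambda(a,b,m)}$ do not arise from the individual corners but only after taking the total cofiber. Your ``Bezout reindexing'' does not explain how a common $S^{\lambda(a,b,m)}$ factors out of a square whose corners carry no such suspension to begin with. In Angeltveit's argument the representation $\lambda(a,b,m)$ emerges from identifying a specific invertible $C_m$-spectrum in the Picard group and then untwisting; that step has no analogue in your outline.
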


Notice that $\frac{dm}{b}-\frac{cm}{a}=\frac{adm-bcm}{ab}=\frac{m}{ab}$, and so $\dim(\lambda(a,b,m))\sim m$ and for any given dimension $n$ there are only finitely many $m$ such that $\dim(\lambda(a,b,m))\leq n$. Thus $B_m$ has connectivity $o(m)$ and so \[\thh(R)\wedge \bigvee_{m\in \ZZ_{\geq 1}}\Sigma^{\infty-1}B_m\simeq \bigvee_{m\in \ZZ_{\geq 1}}\Sigma^{-1}\thh(R)\wedge B_m\simeq \prod_{m\in \ZZ_{\geq 1}}\Sigma^{-1}\thh(R)\wedge B_m\] where the last equivalence comes from the connectivity bounds.

\begin{rem}
The statement in \cite{angeltveit2019picard} also requires a $p$-completion for the equivalence. Notice, however, that \parencite[Theorem 2.1]{angeltveit2019picard} works at all primes. Using either of \cite{Larsen, Buenos_Aires_Group} we can get the rational statement as well in a similar fashion to \parencite[Theorem 1]{Hesselholt_Nikolaus}. So by using an arithmetic fracture square argument the Theorem is true as stated.
\end{rem}

From the above description we see that for any $H\leq \TT$ we have \[\thh(A,R[t])^{hH}\simeq \Sigma^{-1}\prod_{m\in \ZZ_{\geq 1}}\left(\thh(R)\wedge B_m\right)^{hH}\] and \[\thh(A,R[t])^{tH}\simeq \Sigma^{-1}\prod_{m\in \ZZ_{\geq 1}}\left(\thh(R)\wedge B_m\right)^{tH}\] the first equivalence coming from limits commuting with other limits and the second from homotopy $H$ orbits commuting with the wedge sum and not changing the connectivity bounds. From these products we can see that the canonical map will decompose as the product of the canonical maps on each piece. From the description of the Frobenius above and by checking on connected components we see that $\phi^{B^{cy}(\Pi_t/\Pi_{t^1,t^b})}_p$ sends the summand $B_m$ to the factor $B_{pm}^{tC_p}$ in the product. Thus that Frobenius on the relative topological Hochschild Homology also sends $\thh(R)\wedge B_m$ to $\left(\thh(R)\wedge B_{pm}\right)^{tC_p}$. A slightly modified version of \parencite[Lemma 2]{Hesselholt_Nikolaus} then shows that the maps \[\thh(R)^{tC_p}\wedge B_{m/p}\xrightarrow{id\wedge \phi_p^{B^{cy}(\Pi_t/\Pi_{t^a,t^b})}}\thh(R)^{tC_p}\wedge B_m^{tC_p}\xrightarrow{l}\left(\thh(R)\wedge B_m\right)^{tC_p}\] are equivalences, where $B_{m/p}=0$ if $p\nmid m$. Consequently up to equivalence we may use the canonical map term-wise and up to equivalence we may use the Frobenius on $\thh(R)$ instead of the Frobenius on $\thh(A,R[t])$.

Finally, we restrict our attention to $R$ perfectoid with respect to a prime $p$. We refer the reader to \parencite[Section 5.1]{Me} for the relevant properties of perfectoid rings, which is itself a review of the relevant main results from \cite{BMS1, BMS2}. Applying these results to the case at hand gives us the following.
\begin{cor}
Let $B_m$ be as above and $R$ a perfectoid ring. Then there is a functorial identification of $\thh(R;\ZZ_p)\wedge B_m$ as a Borel $\TT$-spectrum with the total cofiber of the square 
\[\begin{tikzcd}
{\Sigma^{2l(a,b,m)}\thh(R;\ZZ_p)\wedge (\TT/C_{m/ab})_+} \arrow[r] \arrow[d] & {\Sigma^{2l(a,b,m)}\thh(R;\ZZ_p)\wedge (\TT/C_{m/b})_+} \arrow[d] \\
{\Sigma^{2l(a,b,m)}\thh(R;\ZZ_p)\wedge (\TT/C_{m/a})_+} \arrow[r]            & {\Sigma^{2l(a,b,m)}\thh(R;\ZZ_p)\wedge (\TT/C_{m})_+}
\end{tikzcd}\]
where the maps are as in Theorem~\ref{thm: Bm as a T spectrum} and $l(a,b,m):=\frac{1}{2}\dim(\lambda(a,b,m))=|\{(i,j)\in \ZZ_{\geq 1}\times \ZZ_{\geq 1}| ai+bj=m\}|$.
\end{cor}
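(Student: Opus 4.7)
The plan is to apply the functor $\thh(R;\ZZ_p)\wedge(-)$ to the Borel $\TT$-equivariant identification of $\Sigma^\infty B_m$ provided by Theorem~\ref{thm: Bm as a T spectrum}. Smashing with $\thh(R;\ZZ_p)$ preserves colimits and commutes with smashing with the orbit spaces $(\TT/C_k)_+$, so it carries total cofiber squares to total cofiber squares. Thus $\thh(R;\ZZ_p)\wedge B_m$ is Borel $\TT$-equivariantly the total cofiber of the square whose vertices are $\thh(R;\ZZ_p)\wedge S^{\lambda(a,b,m)}\wedge (\TT/C_k)_+$ for $k\in\{m/ab,\,m/b,\,m/a,\,m\}$ and whose maps are the canonical orbit projections smashed with the identity on $\thh(R;\ZZ_p)\wedge S^{\lambda(a,b,m)}$.

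The essential remaining step is to produce a Borel $\TT$-equivariant equivalence
\[
\thh(R;\ZZ_p)\wedge S^{\lambda(a,b,m)}\simeq \Sigma^{2l(a,b,m)}\thh(R;\ZZ_p),
\]
natural in $R$. This is where perfectoidness enters. Since $\lambda(a,b,m)=\bigoplus_n \CC(n)$ is a sum of one-dimensional complex $\TT$-representations of complex dimension $l(a,b,m)$, it suffices to produce a $\TT$-equivariant trivialization $\thh(R;\ZZ_p)\wedge S^{\CC(n)}\simeq\Sigma^2\thh(R;\ZZ_p)$ for each $n$ and then smash these together over the characters appearing in $\lambda(a,b,m)$. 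For $R$ perfectoid, the necessary input is the structure of $\thh(R;\ZZ_p)$ established in \cite{BMS1,BMS2} and reviewed in \cite[Section 5.1]{Me}: the Bott element in $\pi_2\thh(R;\ZZ_p)$ refines $\TT$-equivariantly to a trivialization of $\thh(R;\ZZ_p)\wedge S^{\CC(1)}$, so that $\thh(R;\ZZ_p)$ is complex-orientable as a Borel $\TT$-spectrum and every weight $\CC(n)$ becomes trivialized after smashing.

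The hard part is precisely this representation-sphere trivialization: Borel-equivariantly $S^{\CC(n)}$ differs nontrivially from $S^2$ with trivial action, so the shift to $\Sigma^{2l(a,b,m)}$ is not formal — it rests on the complex-orientation input supplied by the perfectoid hypothesis. Once this equivalence is established, substituting it into the smashed square from the first paragraph produces exactly the square in the statement of the corollary, with the canonical orbit projections as transition maps, and the identification $l(a,b,m)=\tfrac{1}{2}\dim_\RR\lambda(a,b,m)=|\{(i,j)\in\ZZ_{\geq 1}^2\mid ai+bj=m\}|$ follows from the description $\lambda(a,b,m)=\bigoplus_{n\in(cm/a,\,dm/b)\cap\ZZ}\CC(n)$ and a direct count of lattice points. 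Functoriality in $R$ is automatic since every step in the argument is natural.
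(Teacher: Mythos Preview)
Your proposal is correct and follows essentially the same route as the paper: smash Angeltveit's square with $\thh(R;\ZZ_p)$ and then invoke the perfectoid hypothesis to trivialize the representation spheres $S^{\lambda(a,b,m)}$ Borel-equivariantly. The paper's proof is a two-sentence version of this, citing \cite[Proposition 4.2]{Me} for the trivialization; your explanation via complex orientation of $\thh(R;\ZZ_p)$ is exactly the content of that proposition.
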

\begin{proof}
The only part of this that does not follow immediately from smashing the square in Theorem~\ref{thm: Bm as a T spectrum} with $\thh(R;\ZZ_p)$ is that the representation spheres can be trivialized. This can in fact be done for $\thh(A;\ZZ_p)$ for any $\mathbb{E}_\infty$ $R$-algebra $A$ by \parencite[Proposition 4.2]{Me}.
\end{proof}

Recall that we assumed without loss of generality in the beginning that $b\in J_p$, in other words that $p\nmid b$. Following \cite{Hesselholt_Nikolaus} we will now give a more explicit form for $\thh(R;\ZZ_p)\wedge B_m$.

\begin{cor}\label{cor: thh(R)wedge B_m}
Let $R$ be a perfectoid ring, $B_m$ as above. Then
\begin{enumerate}
    \item if both $a$ and $b$ do not divide $m$ then \[\thh(R;\ZZ_p)\wedge B_m\simeq \Sigma^{2l(a,b,m)}\thh(R;\ZZ_p)\wedge (\TT/C_m)_+;\]
    \item if $a\mid m$ and $b\nmid m$ then \[\thh(R;\ZZ_p)\wedge B_m\simeq \Sigma^{2l(a,b,m)}\mathrm{cof}\left(\thh(R;\ZZ_p)\wedge (\TT/C_{m/a})_+\to \thh(R;\ZZ_p)\wedge (\TT/C_{m})_+\right)\] and;
    \item if $b\mid m$ then $\thh(R;\ZZ_p)\wedge B_m\simeq 0$.
\end{enumerate}
\end{cor}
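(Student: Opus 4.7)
The plan is to directly analyze the total cofiber from the preceding corollary, splitting into cases based on which corners of the square are nonzero. By the convention $\TT/C_{m/c} := \emptyset$, the corresponding corner smashes to $0$, so the top-left, top-right, and bottom-left corners are nonzero precisely when $ab \mid m$, $b \mid m$, and $a \mid m$ respectively, while the bottom-right corner is always nonzero.

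Cases (1) and (2) are then immediate simplifications of the total cofiber. When $a,b \nmid m$, also $ab \nmid m$, so only the bottom-right corner survives and the total cofiber reduces to $\Sigma^{2l(a,b,m)}\thh(R;\ZZ_p)\wedge(\TT/C_m)_+$, giving (1). When $a\mid m$ but $b\nmid m$, the entire top row vanishes (since $b\nmid m$ forces $ab\nmid m$), and the total cofiber reduces to the suspended cofiber of the bottom row, giving (2).

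For Case (3), where $b\mid m$, I would show the total cofiber vanishes by computing the cofibers of the two \emph{columns}. Each nontrivial vertical map is a quotient projection $\TT/C_{m/b}\to\TT/C_m$ (or the parallel $\TT/C_{m/ab}\to\TT/C_{m/a}$ when $a\mid m$), and is an honest $b$-fold covering of circles. On underlying spectra, such a map of $(\TT/C_n)_+$'s is the identity on the basepoint summand and multiplication by $b$ on the $\Sigma\mathbb{S}$ summand, so its cofiber is a suspension of $\mathbb{S}/b$. Since $b\in J_p$ is coprime to $p$ and $\thh(R;\ZZ_p)$ is $p$-complete, multiplication by $b$ acts invertibly on $\thh(R;\ZZ_p)$ and hence $\thh(R;\ZZ_p)\wedge\mathbb{S}/b\simeq 0$. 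In the subcase $a\nmid m$ the left column is $0\to 0$ and its cofiber is trivially zero. Either way both column cofibers vanish, and since equivalences of Borel $\TT$-spectra are detected on underlying spectra, the total cofiber — the cofiber of the induced map between column cofibers — vanishes.

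The main obstacle is this Case (3) argument: it requires identifying the vertical maps as honest $b$-fold covers of circles on underlying spectra, and noticing that the $a$-fold structure visible in the rows is irrelevant once one works with the columns. Cases (1) and (2) then follow purely formally from the convention that $(\TT/C_{m/c})_+$ smashes to $0$ whenever $c\nmid m$.
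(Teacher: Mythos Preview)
Your proof is correct. Cases (1), (2), and the subcase $b\mid m$, $a\nmid m$ of (3) match the paper exactly: both reduce to the $b$-fold cover $\TT/C_{m/b}\to\TT/C_m$ being a $p$-adic equivalence since $p\nmid b$. The only difference is in the subcase $ab\mid m$. There the paper asserts that $B_m$ is already equivariantly contractible, appealing directly to the cyclic bar construction and claiming the weight-$m$ simplices of $B^{cy}(\Pi_t)$ lie in the image of $B^{cy}(\Pi_{t^a,t^b})$; this is quite terse (and not literally true for every individual simplex, e.g.\ $t\wedge t^5$ when $a=2$, $b=3$, $m=6$ has $1\notin\langle 2,3\rangle$), so some further argument is implicit. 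Your column-cofiber approach instead handles both subcases of (3) uniformly from the Angeltveit square alone, making explicit that all one needs is $p\nmid b$ together with $p$-completeness of $\thh(R;\ZZ_p)$; this is arguably the cleaner route.
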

\begin{proof}
For (1) and (2) this is how we defined the pushout square in Theorem~\ref{thm: Bm as a T spectrum}. For (3) either $b\mid m$ and $a\nmid m$ or $a,b\mid m$. If $a\nmid m$ then we also have a cofiber sequence by definition $\thh(R;\ZZ_p)\wedge B_m\simeq \Sigma^{2l(a,b,m)}\mathrm{cof}\left(\thh(R;\ZZ_p)\wedge(\TT/C_{m/b})_+\to \thh(R;\ZZ_p)\wedge (\TT/C_m)_+\right)$. Since $B_m$ if finite as an underlying space is follows that $\thh(R;\ZZ_p)\wedge B_m$ is $p$-complete. Then since $p\nmid b$ the map $\TT/C_{m/b}\to \TT/C_m$ is a $p$-adic equivalence. Finally if both $a,b\mid m$ then $B_m$ comes from simplicies of $B^{cy}(\Pi_t)$ which are in the image of $B^{cy}(\Pi_{t^a,t^b})$ by definition and so will be equivariantly contractable. 
\end{proof}

\subsection{Topological negative cyclic and periodic homology for the relative term}\label{ssec: relative tc- and tp}. 

From the last section we see that in order to compute the topological negative cyclic homology and topological periodic homology of $A$ we will need to take homotopy fixed points and Tate constructions of several Borel $\TT$-spectra of the form $\thh(R;\ZZ_p)\wedge (\TT/C_m)_+$. In order to do this we recall the following.

\begin{prop}[Proposition 3, \cite{Hesselholt_Nikolaus}]
Let $G$ be a compact Lie group. Let $H\subseteq G$ be a closed subgroup, let $\lambda=T_H(G/H)$ be the tangent space at $H=eH$ with the adjoint left $H$-action, and let $S^\lambda$ be the one-point compactification of $\lambda$. For every spectrum with $G$-action $X$, there are canonical natural equivalences \[\left(X\wedge \left(G/H\right)_+\right)^{hG}\simeq \left(X\wedge S^\lambda\right)^{hH},\] \[\left(X\wedge \left(G/H\right)_+\right)^{tG}\simeq \left(X\wedge S^\lambda\right)^{tH}\]
\end{prop}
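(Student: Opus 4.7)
The plan is to reduce both statements to the shearing and Wirthm\"uller isomorphisms for the closed inclusion $H\subseteq G$. First I would apply the $G$-equivariant shearing map
\[
X\wedge (G/H)_+\;\simeq\;G_+\wedge_H (X|_H),
\]
given on points by $(x,gH)\mapsto [g,g^{-1}x]$, to rewrite the $G$-spectrum as the induction $p_!p^*X$ along $p:BH\to BG$ in the Borel $\infty$-category $\mathrm{Fun}(BG,\Sp)$.

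Next I would invoke the Wirthm\"uller isomorphism for compact Lie $G$ with closed subgroup $H\subseteq G$ (equivalently, ambidexterity of $p$ with invertible dualizing object $S^{\lambda}$), which gives a natural $G$-equivalence
\[
G_+\wedge_H Y\;\simeq\;F_H(G_+,\,Y\wedge S^{\lambda}).
\]
Applied to $Y=X|_H$ this identifies $X\wedge (G/H)_+$ with the coinduction of $X|_H\wedge S^{\lambda}$ along $p$. Taking $G$-homotopy fixed points then passes through the right adjoint $p_*$ to $H$-homotopy fixed points by formal adjunction: $(p_*Z)^{hG}=\lim_{BG}p_*Z\simeq \lim_{BH}Z=Z^{hH}$. (Equivalently, one uses that $EG|_H$ is a free contractible $H$-space and hence a model for $EH$.) Applied to $Z=X|_H\wedge S^{\lambda}$ this yields the first claimed equivalence.

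For the Tate construction I would repeat the same two manipulations: the dual calculation identifies the $G$-homotopy orbits $(X\wedge (G/H)_+)_{hG}\simeq (X|_H)_{hH}$, and $(-)^{tG}$ is the cofiber of the norm map $(-)_{hG}\to (-)^{hG}$. Using the $H$-equivariant decomposition $\mathrm{ad}(G)|_H\simeq \mathrm{ad}(H)\oplus\lambda$ together with naturality of the norm map under the Wirthm\"uller equivalence, the $G$-norm on $G_+\wedge_H (X|_H)$ corresponds exactly to the $H$-norm on $X\wedge S^{\lambda}$, so taking cofibers produces the second equivalence. The main substantive input, and the point I would have to check most carefully, is the Wirthm\"uller/ambidexterity statement for compact Lie groups in the Borel setting---which ultimately rests on Pontryagin--Thom for an embedding of the compact manifold $G/H$ into a $G$-representation---together with the compatibility of the norm map with this equivalence; the remaining manipulations are formal adjunction.
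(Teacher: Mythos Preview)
The paper does not supply a proof of this proposition at all: it is simply quoted with attribution to \cite{Hesselholt_Nikolaus} and then used. So there is no in-paper argument to compare against.

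Your outline is essentially the standard proof and is correct. The two substantive inputs you identify---the shearing/projection formula identification $X\wedge(G/H)_+\simeq p_!p^*X$ and the Wirthm\"uller isomorphism $p_!(-)\simeq p_*(-\wedge S^\lambda)$ for $p:BH\to BG$---are exactly the right ones, and the deduction of the homotopy fixed point statement from $(p_*Z)^{hG}\simeq Z^{hH}$ is then formal. This is the argument Hesselholt and Nikolaus give for Proposition~3 in the cited paper, so your proposal matches the original source even though the present paper does not reproduce it.

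One point worth tightening: for the Tate statement, you assert that the $G$-norm on $G_+\wedge_H(X|_H)$ corresponds to the $H$-norm on $X\wedge S^\lambda$ under Wirthm\"uller. This is true, but ``naturality of the norm map under the Wirthm\"uller equivalence'' is not a one-line observation; it requires identifying the norm with the composite of the unit and counit of the (twisted) ambidexterity adjunction, or equivalently checking that the Adams isomorphism for orbits matches the Wirthm\"uller isomorphism for fixed points along the norm. If you are writing this up in full, that compatibility is the step that deserves a sentence or a reference (e.g.\ to the treatment of ambidexterity in \cite{Nikolaus_Scholze} or to Klein's dualizing spectrum for the Borel setting). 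Everything else in your sketch is airtight.
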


From the discussion of the Frobenius in the previous section we already know that on these terms up to equivalence we may use the Frobenius on $\thh(R;\ZZ_p)$ instead. For what the canonical map is doing on these terms, the universal property discussed in \parencite[Theorem I.4.1]{Nikolaus_Scholze} guarantees that the canonical maps commute with the above isomorphisms. We also have the when $G=\TT$ and $H=C_m$ that the induced action on the tangent space is trivial and so \[\left(X\wedge (\TT/C_m)_+\right)^{h\TT}\simeq \Sigma X^{hC_m}\] and \[\left(X\wedge (\TT/C_m)_+\right)^{t\TT}\simeq \Sigma X^{tC_m}\] for all Borel $\TT$-spectra $X$.

\begin{lem}\label{lem: tc- computation}
Let $R$ be a perfectoid ring. Then \begin{align*}\tc^{-}_{2r}(A,R[t];\ZZ_p)\cong &\left(\prod_{m\in a\ZZ_{\geq 1}\setminus b\ZZ_{\geq 1}, l(a,b,m)<r}W_{v_p(m)}(R)/V_{v_p(a)}W_{v_p(m)-v_p(a)}(R)\right)\\
&\times \left(\prod_{m\in a\ZZ_{\geq 1}\setminus b\ZZ_{\geq 1}, l(a,b,m)\geq r}W_{v_p(m)+1}(R)/V_{v_p(a)}W_{ v_p(m)-v_p(a)+1}(R)\right)\\
&\times \left(\prod_{m\in \ZZ_{\geq 1}\setminus (a\ZZ_{\geq 1}\cup b\ZZ_{\geq 1}), l(a,b,m)<r}W_{v_p(m)}(R)\right)\\
                                    & \times \left(\prod_{m\in \ZZ_{\geq 1}\setminus (a\ZZ_{\geq 1}\cup b\ZZ_{\geq 1}), l(a,b,m)\geq r}W_{v_p(m)+1}(R)\right)
\end{align*} 
where $v_p$ is the $p$-adic valuation and $V_k: W_n(R)\to W_{n+k}(R)$ is the Verschiebung map. The odd homotopy groups vanish.
\end{lem}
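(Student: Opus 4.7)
The plan is to take $h\TT$-fixed points of the decomposition $\thh(A,R[t];\ZZ_p)\simeq \Sigma^{-1}\prod_{m\geq 1}\thh(R;\ZZ_p)\wedge B_m$ from the previous subsection, apply Corollary~\ref{cor: thh(R)wedge B_m} and the proposition above to each factor, and then plug in the description of $\pi_*\thh(R;\ZZ_p)^{hC_{p^v}}$ for $R$ perfectoid coming from \cite{BMS2, Me}.

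First I would analyze each factor separately. Since $(-)^{h\TT}$ commutes with products, it suffices to compute each $(\thh(R;\ZZ_p)\wedge B_m)^{h\TT}$. Case (3) of Corollary~\ref{cor: thh(R)wedge B_m} contributes zero, so I restrict to $b\nmid m$. Applying the specialization $(X\wedge (\TT/C_k)_+)^{h\TT}\simeq \Sigma X^{hC_k}$ of the proposition above to each building block appearing in Corollary~\ref{cor: thh(R)wedge B_m}, and absorbing the overall $\Sigma^{-1}$ together with the representation-sphere shift $\Sigma^{2l(a,b,m)}$, the $B_m$-contribution to $\tc^{-}_{2r}(A,R[t];\ZZ_p)$ is $\pi_{2(r-l(a,b,m))}\thh(R;\ZZ_p)^{hC_m}$ in case (1), and in case (2) it is the cokernel on even homotopy (respectively the kernel on odd homotopy) of the restriction-induced map $\thh(R;\ZZ_p)^{hC_{m/a}}\to \thh(R;\ZZ_p)^{hC_m}$ in the corresponding degree.

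Next I would reduce to the $p$-primary part of $m$ and identify the cofiber map. Writing $m=p^{v_p(m)}m'$ with $\gcd(m',p)=1$, the classifying space $BC_{m'}$ has trivial $p$-adic cohomology, so the forgetful map $\thh(R;\ZZ_p)^{hC_m}\to \thh(R;\ZZ_p)^{hC_{p^{v_p(m)}}}$ is an equivalence. In case (2) the cofiber map then becomes the restriction $\thh(R;\ZZ_p)^{hC_{p^{v_p(m)-v_p(a)}}}\to \thh(R;\ZZ_p)^{hC_{p^{v_p(m)}}}$, and under the standard identification of even homotopy groups of $\thh(R;\ZZ_p)^{hC_{p^v}}$ with truncated Witt vectors of $R$ this restriction is the iterated Verschiebung $V_{v_p(a)}$.

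Finally I would invoke the perfectoid computation: for $R$ perfectoid, all odd homotopy groups of $\thh(R;\ZZ_p)^{hC_{p^v}}$ vanish, and the even homotopy groups give the values $W_v(R)$ or $W_{v+1}(R)$ matching the four cases of the lemma (the index shifting by one as the internal degree crosses zero, i.e.\ as $l(a,b,m)$ crosses $r$). Combined with the previous two steps this produces the four product factors in the statement; vanishing of the odd $\tc^{-}$-groups in case (2) reduces to injectivity of the Verschiebung $V_{v_p(a)}$ on the relevant Witt vectors, which is standard for perfectoid rings. The main obstacle I expect is step two: making precise the identification of the restriction-induced map with the Verschiebung in the perfectoid generality, and tracking the shift in the Witt-vector index, both of which are the perfectoid analogue of the Hesselholt--Madsen dictionary and should follow from the explicit $\TT$-equivariant computations recorded in \cite{BMS2, Me}.
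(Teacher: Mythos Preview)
Your proposal is correct and follows essentially the same route as the paper: decompose into the product over $m$, apply Corollary~\ref{cor: thh(R)wedge B_m} and the Wirthm\"uller-type proposition to each factor, reduce to $C_{p^{v_p(m)}}$-fixed points via the $p$-adic equivalence $C_{p^{v_p(m)}}\hookrightarrow C_m$, and then read off the answer from the known even homotopy of $\thh(R;\ZZ_p)^{hC_{p^v}}$. The two points you flag as the main obstacles---identifying the restriction map with the Verschiebung and the precise Witt-vector indexing---are exactly what the paper dispatches by citing \parencite[Corollary 5.8]{Me} and \parencite[Lemma 5.13]{Me}; your observation that vanishing of the odd groups in case (2) amounts to injectivity of $V_{v_p(a)}$ is implicit in the paper's ``The result follows.''
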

\begin{proof}
From the last section we have that $\thh(A,R[t];\ZZ_p)\simeq \prod_{m\geq 1}\Sigma^{-1}\thh(R;\ZZ_p)\wedge B_m$ where the factors are equivariantly described by Corollary~\ref{cor: thh(R)wedge B_m}. Consequently \[\tc^{-}(A,R[t];\ZZ_p)\simeq \prod_{m\geq 1}\Sigma^{-1}\left(\thh(R;\ZZ_p)\wedge B_m\right)^{h\TT}\] and so we need only compute $\left(\thh(R;\ZZ_p)\wedge B_m\right)^{h\TT}$ for each $m\geq 1$. Using Corollary~\ref{cor: thh(R)wedge B_m} we will break this into cases depending on $m$.

Case One: $b\mid m$. Then $\thh(R;\ZZ_p)\wedge B_m\simeq 0$ as Borel $\TT$-spectra and so their homotopy fixed points will vanish.

Case Two: $a\nmid m$ and $b\nmid m$. Then By Corollary~\ref{cor: thh(R)wedge B_m} $\thh(R;\ZZ_p)\wedge B_m\simeq \Sigma^{2l(a,b,m)}\thh(R;\ZZ_p)\wedge (\TT/C_m)_+$ and so by the above proposition \[\left(\thh(R;\ZZ_p)\wedge B_m\right)^{h\TT}\simeq \Sigma^{2l(a,b,m)+1}\thh(R;\ZZ_p)^{hC_m}\simeq \Sigma^{2l(a,b,m)+1}\thh(R;\ZZ_p)^{hC_{v_p(m)}}\] where the second equivalence comes from the fact that $C_{v_p(m)}\hookrightarrow C_m$ is a $p$-adic equivalence. These then contribute the even homotopy groups by \parencite[Corollary 5.8]{Me}. 

Case Three: $a\mid m$ and $b\nmid m$. Then by the above we have a cofiber sequence \[\Sigma^{2l(a,b,m)+1}\thh(R;\ZZ_p)^{hC_{\frac{m}{a}}}\to \Sigma^{2l(a,b,m)+1}\thh(R;\ZZ_p)^{hC_{m}}\to \thh(R;\ZZ_p)\wedge B_m\] where the first map comes from the projection $\TT/C_{\frac{m}{a}}\to \TT/C_m$. The induced map on the homotopy fixed points was shown in \parencite[Lemma 5.13]{Me} to agree with the Verschiebung map. The result follows.
\end{proof}

A similar argument gives the following computation as well.

\begin{lem}
Let $R$ be a perfectoid ring. Then 
\begin{align*}
    \tp_{2r}(A,R[t];\ZZ_p)\cong &\prod_{m\in a\ZZ_{\geq 1}\setminus b\ZZ_{\geq 1}}W_{v_p(m)}(R)/V_{v_p(a)}W_{v_p(m)-v_p(a)}(R)\\
                                    &\times \prod_{m\in \ZZ_{\geq 1}\setminus(a\ZZ_{\geq 1}\cup b\ZZ_{\geq 1})}W_{v_p(m)}(R)
\end{align*}
where $v_p$ is the $p$-adic valuation. The odd homotopy groups vanish.
\end{lem}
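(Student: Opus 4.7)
The plan is to mirror the proof of Lemma~\ref{lem: tc- computation}, replacing homotopy $\TT$-fixed points with the Tate construction throughout. Starting from the wedge decomposition
\[
\thh(A, R[t]; \ZZ_p) \simeq \prod_{m \geq 1} \Sigma^{-1} \thh(R; \ZZ_p) \wedge B_m
\]
from Subsection~\ref{ssec: relative thh}, and arguing as in the $\tc^-$ case that the Tate construction commutes with the wedge sum via the connectivity bounds on $B_m$, I obtain
\[
\tp(A, R[t]; \ZZ_p) \simeq \prod_{m \geq 1} \Sigma^{-1} \bigl(\thh(R; \ZZ_p) \wedge B_m\bigr)^{t\TT}.
\]
It therefore suffices to compute $\bigl(\thh(R; \ZZ_p) \wedge B_m\bigr)^{t\TT}$ for each $m \geq 1$, and I split into the three cases of Corollary~\ref{cor: thh(R)wedge B_m}.

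The case $b \mid m$ contributes nothing. In the case $a \nmid m$, $b \nmid m$, Corollary~\ref{cor: thh(R)wedge B_m} combined with the proposition identifying $(X \wedge (\TT/C_m)_+)^{t\TT}$ with $\Sigma X^{tC_m}$ and the $p$-adic equivalence $C_{v_p(m)} \hookrightarrow C_m$ gives
\[
\bigl(\thh(R; \ZZ_p) \wedge B_m\bigr)^{t\TT} \simeq \Sigma^{2l(a,b,m)+1} \thh(R; \ZZ_p)^{tC_{v_p(m)}}.
\]
The key input for perfectoid $R$ is that $\pi_* \thh(R; \ZZ_p)^{tC_{p^n}}$ is concentrated in even degrees and equals $W_n(R)$ in each such degree, uniformly by $2$-periodicity of the Tate construction together with the description reviewed in \cite[Section 5.1]{Me}. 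Thus $\pi_{2r}$ of the corresponding factor gives $W_{v_p(m)}(R)$ independently of $r$, and the odd groups vanish. In the case $a \mid m$, $b \nmid m$, the same analysis applied to the cofiber description of $B_m$ yields a cofiber sequence
\[
\Sigma^{2l(a,b,m)+1} \thh(R;\ZZ_p)^{tC_{v_p(m)-v_p(a)}} \to \Sigma^{2l(a,b,m)+1} \thh(R;\ZZ_p)^{tC_{v_p(m)}} \to \Sigma \bigl(\thh(R;\ZZ_p) \wedge B_m\bigr)^{t\TT},
\]
and identifying the first map with the Verschiebung $V_{v_p(a)}$ produces the quotient $W_{v_p(m)}(R)/V_{v_p(a)} W_{v_p(m)-v_p(a)}(R)$ on $\pi_{2r}$.

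The main simplification relative to Lemma~\ref{lem: tc- computation} is that $2$-periodicity of the Tate construction eliminates the degree-dependent dichotomy between $l(a,b,m) < r$ and $l(a,b,m) \geq r$ that appeared for $\tc^-$, so each $m$ contributes a single uniform Witt-vector factor. The main obstacle is verifying that the connecting map in the third case really is the Verschiebung on the Tate-theoretic side; this should go through by the same multiplicative/trace argument as in \cite[Lemma 5.13]{Me}, using that the canonical map $\tc^-(R;\ZZ_p) \to \tp(R;\ZZ_p)$ intertwines the two identifications and is compatible with the transfer map induced by $\TT/C_{m/a} \to \TT/C_m$. Once this is in hand, assembling the contributions across $m$ and separating by whether $a \mid m$ yields the stated product decomposition.
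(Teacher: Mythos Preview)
Your proposal is correct and follows essentially the same approach the paper intends; indeed the paper's own justification is simply ``A similar argument gives the following computation as well,'' and you have spelled out exactly that similar argument. One cosmetic slip: in your cofiber sequence for Case Three the last term should be $(\thh(R;\ZZ_p)\wedge B_m)^{t\TT}$ rather than its suspension, but this does not affect the argument.
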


\subsection{The relative topological cyclic homology computation}\label{ssec: relative tc}

From Theorem~\ref{thm: ns main result} to compute the topological cyclic homology we need only understand the topological negative cyclic homology, the topological periodic homology, and the maps between them. From the previous two subsections we have all the pieces, all that remains is to put them together. While Subsection~\ref{ssec: relative tc- and tp} computed the whole topological negative cyclic and periodic homologies, it will be much more convenient to work with the individual factors and put it all together later. In order to do this we will re-index the product as follows: for a given $m\in \ZZ_{\geq 1}$ let $m':=m/p^{v_p(m)}$ and let $J_p:=\ZZ_{\geq 1}\setminus p\ZZ_{\geq 1}$. Then $m'\in J_p$ and every $m$ can be written uniquely as an element $m'\in J_p$ and a natural number $\nu\in \ZZ_{\geq 0}$ recording the $p$-adic valuation of $m$. Thus the fiber sequence coming from Theorem~\ref{thm: ns main result} can be re-expressed as \[\tc(A,R[t];\ZZ_p)\to \prod_{m'\in J_p}\prod_{\nu = 0}^{\infty}\Sigma^{-1}\left(\thh(R;\ZZ_p)\wedge B_{m'p^\nu}\right)^{h\TT}\xrightarrow{can-\phi_p}\prod_{m'\in J_p}\prod_{\nu=0}^\infty\Sigma^{-1}\left(\thh(R;\ZZ_p)\wedge B_{m'p^\nu}\right)^{t\TT}\] and the canonical map respects both products, and the Frobenius respects the product over $J_p$ and sends the $\nu^{\textrm{th}}$ factor to the $(\nu+1)^{\textrm{st}}$ factor. 

Since limits commute with products it is then enough to compute the fiber of the map \[\prod_{\nu=0}^\infty \Sigma^{-1}\left(\thh(R;\ZZ_p)\wedge B_{m'p^\nu}\right)^{h\TT}\xrightarrow{can-\phi_p}\prod_{\nu=0}^\infty \Sigma^{-1}\left(\thh(R;\ZZ_p)\wedge B_{m'p^\nu}\right)^{t\TT}\] for all $m'\in J_p$. There are two cases to consider: either $b\nmid m'$ or $b\mid m'$. Since by assumption $v_p(b)=0$, $b\mid m$ if and only if $b\mid m'$. Then $b\mid m'$ implies $\thh(R;\ZZ_p)\wedge B_m$ is equivariantly contractable, so we need only consider the case when $b\nmid m'$. 

To handle this case, it is once again convenient to split this problem into two cases: when $a':= a/p^{v_p(a)}$ divides $m'$ and when it does not. The easier case is when $a'\nmid m'$, and so we will handle it first.

In this case $a\nmid m$ and so $\thh(R;\ZZ_p)\wedge B_m\simeq \Sigma^{2l(a,b,m)}\thh(R;\ZZ_p)\wedge (\TT/C_m)_+$. Thus the map we need to compute the fiber of becomes \[\prod_{\nu =0}^\infty \Sigma^{2l(a,b,m)}\thh(R;\ZZ_p)^{hC_{p^\nu}}\xrightarrow{can-\phi_p}\prod_{\nu=0}^\infty \Sigma^{2l(a,b,m)}\thh(R;\ZZ_p)^{tC_{p^\nu}}\] where $can:\thh(R;\ZZ_p)^{hC_p^\nu}\to \thh(R;\ZZ_p)^{tC_p^\nu}$ is the usual canonical map and \[\phi_p:\thh(R;\ZZ_p)^{hC_{p^\nu}}\to \Sigma^{l(a,b,pm)-2l(a,b,m)}\thh(R;\ZZ_p)^{tC_{p^{\nu+1}}}\] is up to an equivalence given by the homotopy $C_\nu$ fixed points of the usual Frobenius $\phi_p:\thh(R;\ZZ_p)\to \thh(R;\ZZ_p)^{tC_p}$. 

Recall that $s=s(a,b,r,p,m')$ is the function from \cite{Hesselholt_Nikolaus} which is the unique integer $s$ such that $l(a,b,mp^{s-1})\leq r<l(a,b,m'p^s)$ if such an integer exists, and is zero otherwise.
\begin{lem}\label{lem: tc calculation part 1}
Suppose that $a,b\nmid m'$. Then the fiber of the map \[\prod_{\nu =0}^\infty \Sigma^{2l(a,b,m)}\thh(R;\ZZ_p)^{hC_{p^\nu}}\xrightarrow{can-\phi_p}\prod_{\nu=0}^\infty \Sigma^{2l(a,b,m)}\thh(R;\ZZ_p)^{tC_{p^\nu}}\] has homotopy group $W_{s(a,b,r,p,m')}(R)$ in degree $2r$, and the odd homotopy groups vanish. 
\end{lem}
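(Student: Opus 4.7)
The plan is to run the proof directly in even homotopy degrees, using the perfectoid analogues of the formulas that Hesselholt--Nikolaus use in the perfect $\FF_p$ case. Recall from the paragraph preceding the lemma that the two maps of interest are given, up to equivalence, by the canonical map and the homotopy $C_{p^\nu}$-fixed points of the Tate-valued Frobenius on $\thh(R;\ZZ_p)$, and that $\phi_p$ shifts the index $\nu\mapsto \nu+1$. By the hypothesis $a,b\nmid m'$ (together with $b\in J_p$) we have $a,b\nmid m'p^\nu$ for every $\nu\geq 0$, which is exactly the range in which Corollary~\ref{cor: thh(R)wedge B_m}(1) applies.

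First, I would use the perfectoid input cited earlier as \parencite[Corollary 5.8]{Me} to identify
\[
\pi_{2r}\!\left(\Sigma^{2l(a,b,m'p^\nu)}\thh(R;\ZZ_p)^{hC_{p^\nu}}\right)\;\cong\;W_{\nu+1}(R)
\]
whenever $l(a,b,m'p^\nu)\leq r$, and to show that both the source and the target of $can-\phi_p$ are concentrated in even degrees. A short exact sequence from the fiber sequence then reduces the lemma to computing the kernel and cokernel of the induced map $\pi_{2r}(can-\phi_p)$ on these Witt-vector factors, and verifying that the cokernel is zero.

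Second, I would split the product over $\nu$ at the cutoff provided by the definition of $s=s(a,b,r,p,m')$: for $\nu\leq s-1$ the shifted degree $2r-2l(a,b,m'p^\nu)$ is non-negative and the source factor is $W_{\nu+1}(R)$, while for $\nu\geq s$ the shifted degree is negative and the relevant perfectoid formulas force the source factor to vanish (the Tate factor, however, survives and is periodic as in the perfectoid Bott picture). Under these identifications $can\colon W_{\nu+1}(R)\to W_{\nu+1}(R)$ is the identity in the overlapping range and the restriction/projection between adjacent levels in the boundary range, while $\phi_p\colon W_{\nu+1}(R)\to W_{\nu+2}(R)$ is the Witt-vector Frobenius composed with the standard inclusion, exactly as in \parencite[Lemma~4]{Hesselholt_Nikolaus}. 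Once this is set up, the calculation of the kernel of $can-\phi_p$ is a direct induction on $\nu$: one solves the equations $can(x_\nu)=\phi_p(x_{\nu-1})$ for $\nu=0,1,\dots,s-1$, and then the condition at $\nu=s$ forces a compatibility which identifies the set of solutions with $W_s(R)$; a parallel (and easier) bookkeeping shows that the cokernel vanishes, giving the asserted vanishing in odd degree.

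The main obstacle is the second step: keeping track of the precise Witt-length at each $\nu$ (i.e.~whether the relevant factor is $W_{\nu+1}(R)$ or $W_\nu(R)$, and whether an off-by-one appears when transitioning through $\nu=s$), and confirming that $can$ and $\phi_p$ really are the restriction and Frobenius on the nose rather than up to a unit. These identifications require a careful reading of the perfectoid computations from \parencite[Section~5]{Me} that underlie Corollary~\ref{cor: thh(R)wedge B_m}, but once that is done the inductive kernel calculation proceeds by the same bookkeeping as in the perfect $\FF_p$ case, yielding $W_{s(a,b,r,p,m')}(R)$ in degree $2r$ and nothing in odd degrees.
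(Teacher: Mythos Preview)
Your overall architecture---split the product at $\nu=s$, identify the pieces with Witt vectors, and run an inductive kernel/cokernel argument---is the same as the paper's. However, there is a genuine error in your second step that breaks the proof.

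You assert that for $\nu\geq s$ ``the relevant perfectoid formulas force the source factor to vanish (the Tate factor, however, survives and is periodic\ldots)''. This is false: homotopy \emph{fixed points} do not preserve connectivity, and in this range the canonical map
\[
\pi_{2r}\bigl(\Sigma^{2l(a,b,m'p^\nu)}\thh(R;\ZZ_p)^{hC_{p^\nu}}\bigr)\xrightarrow{\ can\ }\pi_{2r}\bigl(\Sigma^{2l(a,b,m'p^\nu)}\thh(R;\ZZ_p)^{tC_{p^\nu}}\bigr)
\]
is an \emph{isomorphism}, because its fiber is a homotopy group of the connective spectrum $\Sigma^{2l(a,b,m'p^\nu)}\thh(R;\ZZ_p)_{hC_{p^\nu}}$ in negative degree. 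Thus for $\nu\geq s$ the source factor is $W_\nu(R)$, not zero. Your own parenthetical observation that the Tate target survives should have flagged this: if the target is nonzero and $can$ is an isomorphism there, the source cannot vanish.

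This matters. With the source vanishing for $\nu\geq s$, your map $can-\phi_p$ would have domain a \emph{finite} product and codomain an \emph{infinite} product, producing an enormous cokernel and hence nonzero odd homotopy in the fiber---exactly what the lemma denies. The paper instead uses precisely the fact that $can$ is an isomorphism on each factor with $\nu\geq s$ to show that $can-\phi_p$ is an isomorphism on the entire infinite tail $\prod_{\nu\geq s}W_\nu(R)\to\prod_{\nu\geq s}W_\nu(R)$ (build preimages upward using $can^{-1}$; injectivity follows since $\phi_p$ shifts the index). Only after peeling off this tail does one reduce to the finite range $0\leq\nu\leq s-1$, where now $\phi_p$ (not $can$) is the isomorphism, and the kernel is extracted as $W_s(R)$ via projection to the $(s-1)$st coordinate. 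Your description of $can$ as ``the identity'' and of $\phi_p$ as landing in $W_{\nu+2}(R)$ is also off in this finite range: the target factor at index $\nu$ is $W_\nu(R)$, so $can$ is the restriction $W_{\nu+1}(R)\to W_\nu(R)$ and $\phi_p$ is an isomorphism $W_{\nu+1}(R)\to W_{\nu+1}(R)$.
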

\begin{proof}
We have identified the homotopy groups of the source and target of this map, and so all that remains is to check on each homotopy group what the maps are doing. Since both the source and target have no odd homotopy groups the kernel of the degree $2r$ homotopy groups will be the homotopy groups of the fiber in degree $2r$ and the cokernel of the degree $2r$ homotopy groups will be the homotopy group of the fiber in degree $2r-1$. 

To compute this, note that $can:\thh(R;\ZZ_p)^{hC_{p^\nu}}\to \thh(R;\ZZ_p)^{tC_{p^\nu}}$ is $\thh(R;\ZZ_p)_{hC_{p^\nu}}$ which is connective. Thus when $r< l(a,b,m)$ the map \[\pi_{2r}(can):\pi_{2r}\left(\Sigma^{2l(a,b,m)}\thh(R;\ZZ_p)^{hC_{p^\nu}}\right)\to \pi_{2r}\left(\Sigma^{2l(a,b,m)}\thh(R;\ZZ_p)^{tC_{p^\nu}}\right)\] is an isomorphism. Then we have a map of short exact sequences
\[
\begin{tikzcd}
0 \arrow[d]                                                                                                            &  & 0 \arrow[d]                                                                                     \\
\prod_{\nu=s}^{\infty}W_{\nu}(R) \arrow[rr, "can-\phi"] \arrow[d]                                                      &  & \prod_{\nu =s}^{\infty}W_{\nu}(R) \arrow[d]                                                     \\
{\prod_{\nu=0}^\infty\pi_{2r}\left(\Sigma^{2l(a,b,m)}\thh(R;\ZZ_p)^{hC_{p^\nu}}\right)} \arrow[rr, "can-\phi"] \arrow[d] &  & {\prod_{\nu=0}^\infty\pi_{2r}\left(\Sigma^{2l(a,b,m)}\thh(R;\ZZ_p)^{tC_{p^\nu}}\right)} \arrow[d] \\
\prod_{\nu=0}^{s-1}W_{\nu+1}(R) \arrow[d] \arrow[rr, "\overline{can-\phi_p}"]                                            &  & \prod_{\nu=0}^{s-1}W_{\nu}(R) \arrow[d]                                                         \\
0                                                                                                                      &  & 0                                                                
\end{tikzcd}
\]
where the map $\overline{can-\phi_p}$ is the map induced by the map $can-\phi_p:\prod_{\nu=0}^{s-1}W_{\nu+1}(R)\to \prod_{\nu=0}^{s}W_{\nu}(R)$ followed by the quotient map $\prod_{\nu=0}^{s}W_{\nu}(R)\to \prod_{\nu=0}^{s-1}W_{\nu}(R)$. In addition the top term comes from the factors of $\prod_{\nu =0}^\infty \Sigma^{2l(a,b,m)}\thh(R;\ZZ_p)^{hC_{p^\nu}}$ which contribute negative homotopy groups of $\thh(R;\ZZ_p)^{hC_{p^\nu}}$ in degree $2r$ and so on these factors the canonical map is an isomorphism. We first show that the top map is an isomorphism and so the kernel and cokernel of the map in question and those of the bottom map agree.

Let $(-)_k:\prod_{\nu=s}^\infty W_\nu(R)\to W_k(R)$ be the canonical projection map for all $s\leq k$. If $x\in \ker(can-\phi_p)$ and $x\neq 0$ then there exists some $k$ such that $x_k\neq 0$. Chose $k$ to be the smallest such integer. Then $(can(x)-\phi_p(x))_k=can(x_k)-\phi_p(x_{k-1})=can(x_k)\neq 0$ and so $x$ cannot be in the kernel and $can-\phi_p$ is injective. On the other hand if $y\in \prod_{\nu=s}^\infty W_\nu(R)$ then we can build a pre-image inductively as $x_s=can^{-1}(y_s)$ and $x_{s+n}=can^{-1}(y_{s+n}+\phi_p(y_{s+n-1}))$. Consequently $can-\phi_p$ is an isomorphism on these terms as desired.

It remains to compute the kernel and cokernel of the bottom map. There are the terms which are contributed by non-negative homotopy groups of $\thh(R;\ZZ_p)^{hC_{p^\nu}}$ and so the Frobenius is an isomorphism by \parencite[Proposition 6.2]{BMS2} and the fact that homotopy fixed points preserve coconnectivity. Using similar notation as in the previous paragraph, we can inductively build a pre-image of any element $y\in \prod_{\nu=0}^{s-1}W_\nu(R)$ by taking $x_{s-1}=0$ and $x_{s-1-n}=\phi_p^{-1}(-y_{s-n}+can(x_{s-n}))$ for $1\leq n\leq s-1$. Note that $W_0(R)=0$ so we do not have an issue at the last step. Hence the cokernel is zero and we need only find the kernel.

We will build an isomorphism $f:\ker(\overline{can-\phi_p})\to W_{s}(R)$. For an element $x\in \ker(\overline{can-\phi_p})$, define $f(x):= x_{s-1}$. If $x_{s-1}=0$, then since $x\in \ker(\overline{can-\phi_p})$ it follows that $x_{s-2}=\phi_p^{-1}(can(x_{s-1}))=0$ and inductively $x=0$ so $f$ is injective. For surjectivity the element $x_{s-1}=a$ and inductively $x_{s-n-1}:=\phi_p^{-1}(can(x_{s-n}))$ will be a lift of $a$ to the kernel. 
\end{proof}

We now turn our attention to the case where $a'\mid m$.

\begin{lem}\label{lem: tc computation part 2}
Suppose that $b\nmid m'$ and $a'\mid m'$. Then the fiber of the map \[\prod_{\nu=0}^\infty \Sigma^{-1}\left(\thh(R;\ZZ_p)\wedge B_{m'p^\nu}\right)^{h\TT}\xrightarrow{can-\phi_p}\prod_{\nu=0}^\infty \Sigma^{-1}\left(\thh(R;\ZZ_p)\wedge B_{m'p^\nu}\right)^{t\TT}\] has homotopy groups in degree $2r$ given by $W_s(R)$ if $s\leq v_p(a)$ and $W_{s}(R)/V_{v_p(a)}W_{s-v_p(s)}(R)\cong W_{v_p(a)}(R)$ if $s>v_p(a)$. The odd homotopy groups vanish.
\end{lem}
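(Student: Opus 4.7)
The plan is to mirror the argument of Lemma~\ref{lem: tc calculation part 1}, keeping track of the additional quotient by the Verschiebung that appears for indices $\nu\geq v_p(a)$. Since $a'\mid m'$ and $p\nmid b$, one checks that $a\mid m'p^\nu$ precisely when $\nu\geq v_p(a)$, so the $\nu$-th source and target factors are computed by case (1) of Corollary~\ref{cor: thh(R)wedge B_m} for $\nu<v_p(a)$ and by case (2) for $\nu\geq v_p(a)$. In case (2) a fiber sequence involving the canonical projection $\TT/C_{m'p^\nu/a}\to \TT/C_{m'p^\nu}$ appears, and Lemma~5.13 of \cite{Me} identifies the induced map on homotopy fixed points (and Tate constructions) with the Verschiebung $V_{v_p(a)}$. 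Consequently, for $\nu\geq v_p(a)$, $\pi_{2r}$ of the $\nu$-th source factor is the cokernel of a Verschiebung, equal to $W_{\nu+1}(R)/V_{v_p(a)}W_{\nu+1-v_p(a)}(R)$ or $W_\nu(R)/V_{v_p(a)}W_{\nu-v_p(a)}(R)$ depending on the sign of $r-l(a,b,m'p^\nu)$, while the target factor is always $W_\nu(R)/V_{v_p(a)}W_{\nu-v_p(a)}(R)$.

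As in the previous lemma, I would split the infinite product at $\nu=s$, where $s=s(a,b,r,p,m')$ is the threshold beyond which $r<l(a,b,m'p^\nu)$. For $\nu\geq s$ the source and target factors coincide (both are $W_\nu(R)$ in case (1) or the same Verschiebung quotient in case (2)), and the canonical map becomes the identity under this identification. The verbatim argument from Lemma~\ref{lem: tc calculation part 1} --- injectivity by examining the minimal nonzero component, surjectivity by inductively solving $can(x_\nu)=y_\nu+\phi_p(x_{\nu-1})$ --- shows that $can-\phi_p$ is an isomorphism on this truncated product, contributing nothing to the fiber.

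For the remaining finite product over $0\leq \nu\leq s-1$, the induced map $\overline{can-\phi_p}$ has vanishing cokernel by the dual inductive construction: starting from $x_{s-1}=0$ and setting $x_{s-1-n}=\phi_p^{-1}(-y_{s-n}+can(x_{s-n}))$ produces a preimage, using that $\phi_p$ remains invertible on each factor in this range. The kernel is then identified via $f(x):=x_{s-1}$ with the $(s-1)$-st factor, which is $W_s(R)$ when $s\leq v_p(a)$ and $W_s(R)/V_{v_p(a)}W_{s-v_p(a)}(R)\cong W_{v_p(a)}(R)$ (the isomorphism given by the restriction $W_s(R)\to W_{v_p(a)}(R)$, whose kernel is exactly the image of $V^{v_p(a)}$) when $s>v_p(a)$. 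The odd homotopy groups vanish automatically since source and target are concentrated in even degrees. The main technical obstacle is verifying compatibility of $can$ and $\phi_p$ across the transition from the case (1) factor at $\nu=v_p(a)-1$ to the case (2) factor at $\nu=v_p(a)$, so that $\phi_p^{-1}$ is well-defined on the Verschiebung quotients and the recursion crosses this step smoothly; this ultimately follows from the naturality of the cofiber construction in Corollary~\ref{cor: thh(R)wedge B_m} combined with Lemma~5.13 of \cite{Me}.
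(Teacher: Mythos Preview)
Your approach is correct and close to the paper's, but you organize the splitting differently, and your justification of the transition step is imprecise.

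The paper first splits the product at $\nu=v_p(a)$ rather than at $\nu=s$: this cleanly separates the case (1) factors (the bottom, $\nu<v_p(a)$) from the case (2) factors (the top, $\nu\ge v_p(a)$). For $s\le v_p(a)$ the top is then handled by showing $can$ is an isomorphism on every case (2) factor (via the cofiber sequence and the connectivity of $can$ on the two legs), so only the bottom contributes and Lemma~\ref{lem: tc calculation part 1} gives $W_s(R)$ verbatim. For $s>v_p(a)$ the paper notes that the Frobenius on the case (2) cofiber inherits the needed coconnectivity from the two legs, and then---exactly as you do---runs the Lemma~\ref{lem: tc calculation part 1} recursion on the full product to identify the kernel as $x_{s-1}\in W_s(R)/V_{v_p(a)}W_{s-v_p(a)}(R)$. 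Your single split at $\nu=s$ reaches the same endpoint with slightly less case analysis, at the cost of having mixed case (1)/(2) factors in the finite piece.

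The one point where your write-up is too vague is the transition $\nu=v_p(a)-1\to\nu=v_p(a)$. Saying it follows from ``naturality of the cofiber construction combined with Lemma~5.13 of \cite{Me}'' does not isolate the reason $\phi_p$ stays invertible there. The actual mechanism, which the paper makes explicit via a commutative diagram, is that the case (2) target at $\nu=v_p(a)$ is the cofiber of $\Sigma^{2l}\thh(R;\ZZ_p)^{tC_{p^{0}}}\to\Sigma^{2l}\thh(R;\ZZ_p)^{tC_{p^{v_p(a)}}}$, and $(-)^{tC_{p^0}}=0$. Hence that cofiber \emph{is} $\Sigma^{2l}\thh(R;\ZZ_p)^{tC_{p^{v_p(a)}}}$, the Verschiebung quotient is by $W_0(R)=0$, and $\phi_p:\Sigma^{2l}\thh(R;\ZZ_p)^{hC_{p^{v_p(a)-1}}}\to\Sigma^{2l}\thh(R;\ZZ_p)^{tC_{p^{v_p(a)}}}$ is the usual Frobenius, an equivalence in the relevant range. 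With that observation your recursion goes through and the argument is complete.
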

\begin{proof}
We have a map of fiber sequences \[
\begin{tikzcd}
\prod\limits_{\nu=v_p(a)}^\infty\Sigma^{-1}(\thh(R;\ZZ_p)\wedge B_{m'p^\nu})^{h\TT} \arrow[d] \arrow[r, "can-\phi_p"]     & \prod\limits_{\nu=v_p(a)}^\infty\Sigma^{-1}(\thh(R;\ZZ_p)\wedge B_{m'p^\nu})^{t\TT} \arrow[d] \\
\prod\limits_{\nu=0}^\infty\Sigma^{-1}(\thh(R;\ZZ_p)\wedge B_{m'p^\nu})^{h\TT} \arrow[d] \arrow[r, "can-\phi_p"]          & \prod\limits_{\nu=0}^\infty\Sigma^{-1}(\thh(R;\ZZ_p)\wedge B_{m'p^\nu})^{t\TT} \arrow[d]      \\
\prod\limits^{\nu=v_p(a)-1}_{\nu=0}\Sigma^{-1}(\thh(R;\ZZ_p)\wedge B_{m'p^\nu})^{h\TT} \arrow[r, "\overline{can-\phi_p}"] & \prod\limits^{\nu=v_p(a)-1}_{\nu=0}\Sigma^{-1}(\thh(R;\ZZ_p)\wedge B_{m'p^\nu})^{t\TT}       
\end{tikzcd}
\]
and the factors in the bottom terms have $a,b\nmid m$. One of the top or bottom maps will be an isomorphism as we will soon see, and so the other map will contribute the homotopy groups. 

Suppose first that $s(a,b,r,p,m')\leq r$. By Corollary~\ref{cor: thh(R)wedge B_m} the factors in the top terms are given by cofibers \[\thh(R;\ZZ_p)\wedge B_m\simeq \Sigma^{2l(a,b,m)}\mathrm{cof}\left(\thh(R;\ZZ_p)\wedge (\TT/C_{m/a})_+\to \thh(R;\ZZ_p)\wedge (\TT/C_{m})_+\right)\] and we have maps of cofiber sequences \[\begin{tikzcd}
{\Sigma^{2l(a,b,m)}\thh(R;\ZZ_p)^{hC_{p^{\nu-v_p(a)}}}} \arrow[r] \arrow[d, "can"] & {\Sigma^{2l(a,b,m)}\thh(R;\ZZ_p)^{hC_{p^\nu}}} \arrow[r] \arrow[d, "can"] & \Sigma^{-1}\left(\thh(R;\ZZ_p)\wedge B_m\right)^{h\TT} \arrow[d, "can"] \\
{\Sigma^{2l(a,b,m)}\thh(R;\ZZ_p)^{tC_{p^{\nu-v_p(a)}}}} \arrow[r]                  & {\Sigma^{2l(a,b,m)}\thh(R;\ZZ_p)^{tC_{p^{\nu}}}} \arrow[r]                & \Sigma^{-1}\left(\thh(R;\ZZ_p)\wedge B_{m}\right)^{t\TT}               
\end{tikzcd}\]
and the left and middle maps are $2l(a,b,m)$-connected. Then upon taking $\pi_{2r-1}$ we have that the canonical map induces an isomorphism, and since the Frobenius increases the product index by 1 it follows that $can-\phi_p$ is an isomorphism on these terms. The even homotopy groups vanish by the calculations in Subsection~\ref{ssec: relative tc- and tp} and so these terms do not contribute to the homotopy groups of the fiber we are after in degrees $2r$ or $2r-1$. 

Thus if $s\leq v_p(a)$ the homotopy groups of the fiber of the map we are trying to compute in degrees $2r$ and $2r-1$ are given by the kernel and cokernel of  \[\prod\limits^{\nu=v_p(a)-1}_{\nu=0}\Sigma^{-1}(\thh(R;\ZZ_p)\wedge B_{m'p^\nu})^{h\TT} \xrightarrow{\overline{can-\phi_p}} \prod\limits^{\nu=v_p(a)-1}_{\nu=0}\Sigma^{-1}(\thh(R;\ZZ_p)\wedge B_{m'p^\nu})^{t\TT}  \] in degree $2r$. We are now in the same situation as in the previous lemma and so the cokernel of this map is zero and the kernel of this map is isomorphic to $W_s(R)$. 

Finally we consider the case when $s>v_p(a)$. In this case a similar argument as the above and in the previous lemma shows that the bottom map has zero cokernel and kernel isomorphic to $W_{v_p(a)}(R)$. We also have that 
\[\begin{tikzcd}
{\Sigma^{2l(a,b,m)}\thh(R;\ZZ_p)^{hC_{p^{\nu-v_p(a)}}}} \arrow[r] \arrow[d, "\phi_p^{hC_{p^{\nu-v_p(p)}}}"] & {\Sigma^{2l(a,b,m)}\thh(R;\ZZ_p)^{hC_{p^\nu}}} \arrow[r] \arrow[d, "\phi_p^{hC_{p^\nu}}"] & \left(\thh(R;\ZZ_p)\wedge B_m\right)^{h\TT} \arrow[d, "\phi_p"] \\
{\Sigma^{2l(a,b,m)}\thh(R;\ZZ_p)^{tC_{p^{\nu-v_p(a)+1}}}} \arrow[r]                                         & {\Sigma^{2l(a,b,m)}\thh(R;\ZZ_p)^{tC_{p^{\nu+1}}}} \arrow[r]                              & \left(\thh(R;\ZZ_p)\wedge B_{pm}\right)^{t\TT}                 
\end{tikzcd}\] 
is a map of cofiber sequences, where the bottom is a cofiber sequence and the Frobenius can be identified as such by the discussion of the Frobenius in Subsection~\ref{ssec: relative thh}. In particular we have the connectivity and coconnectivity bounds for the Frobenius that we need to make the above arguments work in this case as well. The only issue is in the base case $s=v_p(a)$: in the above we have been working with the product starting at $\nu=0$ and so $(-)^{tC_{p^0}}=0$ and the canonical map has been surjective. To fix this, note that the diagram \[\begin{tikzcd}
                                                                                                                                                                                           &  & {\Sigma^{2l(a,b,m'p^{v_p(a)})}\thh(R;\ZZ_p)^{tC_{p^{v_p(a)-v_p(a)}}}} \arrow[d, "V_{v_p(a)}"] \\
\Sigma^{-1}\left(\thh(R;\ZZ_p)\wedge B_{m'p^{v_p(a)-1}}\right)^{h\TT} \arrow[rr, "\phi_p^{hC_{p^{v_p(a)-1}}}"] \arrow[rrd, "\phi_p"] \arrow[rr, "{\simeq_{\geq 2l(a,b,m'p^{v_p(a)-1})}}"'] &  & {\Sigma^{2l(a,b,m'p^{v_p(a)})}\thh(R;\ZZ_p)^{tC_{p^{v_p(a)}}}} \arrow[d]                      \\
                                                                                                                                                                                           &  & \Sigma^{-1}\left(\thh(R;\ZZ_p)\wedge B_{m'p^{v_p(a)}}\right)^{t\TT}                          
\end{tikzcd}\] commutes and thus the Frobenius will still have the desired coconnectivity when you add in the factors $\nu<v_p(a)$. Hence we get a $W_s(R)/V_{v_p(a)}W_{s-v_p(a)}(R)$ in degree $2r$ and no homotopy in degree $2r-1$.
\end{proof}

\section{\texorpdfstring{The $K$-theory of the $p$-completed affine line}{The K-Theory of the p-completed affine line}}\label{sec: affine line}

We are now left only to compute $K(R\langle t\rangle, (t);\ZZ_p)$, or equivalently $K(R\langle t\rangle;\ZZ_p)$ up to $K(R;\ZZ_p)$. We will again use the Henselian invariance of $K^{inv}$, getting that $K^{inv}(R\langle t\rangle; \ZZ_p)\simeq K^{inv}(R/p[t];\ZZ_p)$. Thus we get the pullback square 
\[
\begin{tikzcd}
K(R\langle t\rangle;\ZZ_p) \arrow[r] \arrow[d] & \tc(R\langle t\rangle;\ZZ_p) \arrow[d] \\
{K(R/p[t];\ZZ_p)} \arrow[r]                    & {\tc(R/p[t];\ZZ_p)}                   
\end{tikzcd}
\]
which we may combine with the same Henselian invariance on the nilradical to get the pullback square
\begin{equation}\label{eqn: affine pullback round 1}
\begin{tikzcd}
 & K(R\langle t\rangle;\ZZ_p) \arrow[r] \arrow[d] & \tc(R\langle t\rangle;\ZZ_p) \arrow[d] \\
K(k;\ZZ_p)  \arrow[r,"\simeq"] & {K(k[t];\ZZ_p)} \arrow[r]                      & {\tc(k[t];\ZZ_p)}                     
\end{tikzcd}
\end{equation} where $k=(R/p)/\sqrt{0}$, which is in particular a perfect $\FF_p$-algebra. 

First note that since $R$ is perfectoid, it and $R[t]$ have bounded $p^{\infty}$-torsion. Thus we can use \parencite[Lemma 5.19]{Clausen_Mathew_Morrow} to get that $\tc(R\langle t\rangle; \ZZ_p)\simeq \tc(R[t];\ZZ_p)$. We will begin by computing this in the positive characteristic case. For this we will use a modification of the formula for $\tc^{-}(R\langle t^{\pm 1}\rangle ;\ZZ_p)$ and $\tp(R\langle t^{\pm 1}\rangle;\ZZ_p)$ found in \parencite[Lemma 3.3.4]{My_Thesis} along with the formula for the Frobenius and the canonical maps found in \parencite[Theorem 1.4.3]{My_Thesis}. These results are stated only for the ring $R=\ZZ_p^{cycl}$, but the only property of that ring used for the relevant calculations is that it is perfectoid. Since we are also not inverting $t$ for our setting and for the convenience of the reader we will record the details of these arguments here.

In order to compute the topological cyclic homology, we first need to compute the topological Hochschild homology. To this end note that we have an equivalence \[\thh(\mathbb{S}[t])\simeq \mathbb{S}\vee \bigvee_{n\in \ZZ_{\geq 1}}\Sigma^{\infty}_+\TT/C_n\] as Borel $\TT$ spectra. What is more, the Frobenius is given by the map \[\Sigma^\infty_+ \TT/C_n\to \left(\Sigma^\infty_+\TT/C_{np}\right)^{tC_{p}}\to \thh(\mathbb{S}[t])^{tC_{p}}\] where the first map is the $p^{\textrm{th}}$ power map on underlying spaces and in particular is a $p$-adic equivalence. For a recent reference for this fact, see \parencite[Example 2.2.10]{mccandless}. 

Since $\thh(-):\mathrm{Alg}_{\mathbb{E}_\infty}(\Sp)\to \mathrm{CycSp}$ is symmetric monoidal this then gives that \[\thh(R[t];\ZZ_p)\simeq \left(\thh(R;\ZZ_p)\vee\bigvee_{n\in \ZZ_{\geq 1}}\thh(R;\ZZ_p)\wedge (\TT/C_n)_+\right)^\wedge_p\] with the Frobenius given by the $p$-completion of the map which on the $n^{\textrm{th}}$ wedge summand is the composite \[\begin{tikzcd}
\thh(R;\ZZ_p)\wedge (\TT/C_n)_+ \arrow[rrdd, "\phi_p", bend right] \arrow[rr, "\phi_p^{\thh(R;\ZZ_p)}\wedge id"] &  & \thh(R;\ZZ_p)^{tC_{p}}\wedge (\TT/C_{n})_+ \arrow[d, "{id\wedge \phi_p^{\thh(\mathbb{S}[t])}}"] \\
                                                                                                                 &  & \thh(R;\ZZ_p)\wedge \left((\TT/C_{pn})_+\right)^{tC_{p}} \arrow[d, "l"]                         \\
                                                                                                                 &  & \left(\thh(R;\ZZ_p)\wedge (\TT/C_{pn})_+\right)^{tC_{p}}                                       
\end{tikzcd}\] where $l$ is the lax monoidal transformation for $(-)^{tC_{p}}$ which is an equivalence in this case by \parencite[Lemma 2]{Hesselholt_Nikolaus}.

\subsection{\texorpdfstring{Topological cyclic homology of the affine line for perfect $\FF_p$-algebras}{Topological cyclic homology of the affine line for perfect rings of positive characteristic}}\label{ssec: homotopy invariance charp}
In this section we will compute $\tc^{-}(R\langle t\rangle; \ZZ_p)$ and $\tp(R\langle t\rangle;\ZZ_p)$ for all perfectoid rings $R$. From this description, we will then be able to compute $\tc(R\langle t\rangle;\ZZ_p)$ for $R$ a perfect $\FF_p$ algebra. Unfortunately, while we will also get a long exact sequence for $\tc(R\langle t\rangle;\ZZ_p)$ in terms of these calculations for a general perfectoid ring, we were not able to explicitly calculate the kernel and cokernel of these maps and so need a different argument. The argument we make use of in Subsection~\ref{ssec: tc affine line general} need the positive characteristic case as input, and so we will handle that using these methods in this section.

Unlike in Section~\ref{sec: tc calculation}, we cannot rewrite the direct sum in our description of $\thh(R\langle t\rangle;\ZZ_p)$ as a product, and so we should not expect it to commute with homotopy fixed points or the Tate construction. It will, however, after completing with respect to the Nygaard filtration. This phenomenon will be explored in more detail in a forthcoming paper by the author.

\begin{lem}[Proposition 3.1.1, \cite{My_Thesis}]\label{lem: Nygaard completion of direct sum}
Let $X_i\in \Sp^{BG}$, $i\in I$ be a family of uniformly bounded below Borel $G$-equivariant spectra, where $G$ is any compact connected Lie group. Let $H_i$ be a set of closed subgroups of $G$. Then we have $k$-indexed filtrations
\begin{enumerate}
    \item \[\left(\bigvee_{i\in I}X_i\wedge \left(G/H_i\right)_+\right)_{hG}\simeq \lim_{k\to \infty}\bigvee_{i\in I}\left(\tau_{\leq k}X_i\right)_{hH_i};\]
    \item \[\left(\bigvee_{i\in I}X_i\wedge \left(G/H_i\right)_+\right)^{hG}\simeq \lim_{k\to \infty}\bigvee_{i\in I} \left(\Sigma^{T_{[H_i]}{G/H_i}}\tau_{\leq k}X_i\right)^{hH_i};\]
    \item and
    \[\left(\bigvee_{i\in I}X_i\wedge \left(G/H_i\right)_+\right)^{tG}\simeq \lim_{k\to \infty}\bigvee_{i\in I}\left(\Sigma^{T_{[H_i]}{G/H_i}}\tau_{\leq k}X_i\right)^{tH_i}.\]
\end{enumerate}
Here $\Sigma^{T_{[H_i]}{G/H_i}}$ is suspension at the $H_i$-representation on the tangent space of $G/H_i$ at $[H_i]$.
\end{lem}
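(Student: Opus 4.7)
The argument hinges on the standard Wirthm\"uller-type identifications (as in the proposition of \cite{Hesselholt_Nikolaus} recorded just before Lemma~\ref{lem: tc- computation}): for any Borel $G$-spectrum $X$, closed subgroup $H\leq G$, and $\lambda := T_{[H]}(G/H)$ with its adjoint $H$-action, we have $(X\wedge (G/H)_+)_{hG}\simeq X_{hH}$ and $(X\wedge (G/H)_+)^{?G}\simeq (\Sigma^{\lambda} X)^{?H}$ for $?\in\{h,t\}$. These settle the single-summand case, so the content of the lemma is the compatibility of each operation with an infinite wedge, after a Nygaard-style completion.

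For (1), orbits form a colimit and so tautologically commute with the wedge, giving $(\bigvee_i X_i\wedge (G/H_i)_+)_{hG}\simeq \bigvee_i (X_i)_{hH_i}$ by the termwise Wirthm\"uller identification. To recast this as $\lim_k \bigvee_i (\tau_{\leq k} X_i)_{hH_i}$, I would invoke the uniform lower connectivity of $\{X_i\}$: for each fixed total degree $m$, the groups $\pi_m (\tau_{\leq k} X_i)_{hH_i}$ stabilize in $k$ at a bound depending only on $m$ and the common connectivity bound (via the Atiyah-Hirzebruch spectral sequence for $(-)_{hH_i}$), and crucially not on $i$. This uniform stabilization is exactly what one needs to interchange $\lim_k$ with $\bigvee_i$.

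For (2), set $Y := \bigvee_i X_i\wedge (G/H_i)_+$, which is uniformly bounded below. Postnikov convergence yields $Y\simeq \lim_k \tau_{\leq k} Y$, and since $(-)^{hG}$ commutes with limits we obtain $Y^{hG}\simeq \lim_k (\tau_{\leq k} Y)^{hG}$. Because $\tau_{\leq k}$ is a left adjoint it commutes with colimits, so $\tau_{\leq k} Y\simeq \bigvee_i \tau_{\leq k}(X_i\wedge (G/H_i)_+)$. At each truncation level the summands are uniformly bounded (above by $k$, below by the common connectivity bound), so the descent spectral sequence for $(-)^{hG}$ contributes to each total degree from only finitely many homotopy groups per summand, and the wedge-to-direct-sum behavior on those groups allows the wedge to commute with $(-)^{hG}$ in this range. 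Applying Wirthm\"uller termwise then produces the tangent-representation shift $\Sigma^{\lambda_i}$. Statement (3) follows by the same strategy with $(-)^{tG}$ in place of $(-)^{hG}$, or alternatively from (1) and (2) via the Tate cofiber sequence $(-)_{hG}\to (-)^{hG}\to (-)^{tG}$.

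The principal obstacle is the wedge-$(-)^{hG}$ exchange at each truncation level: without boundedness, infinite wedges do not commute with homotopy fixed points or with the Tate construction, and the Nygaard $k$-indexed inverse limit on the right-hand side is there precisely to absorb this failure. The uniform connectivity assumption on $\{X_i\}$, combined with the finiteness in each fixed total degree of the spectral-sequence contributions after truncation, is what legitimizes the exchange inside the limit and makes the three displayed equivalences formally identical in content.
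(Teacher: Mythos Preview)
Your outline is essentially the paper's approach: use Postnikov convergence and the fact that $\tau_{\leq k}$ commutes with the wedge, apply the Wirthm\"uller isomorphism termwise, and reduce everything to the single claim that for a family of uniformly bounded (above and below) Borel $G$-spectra one may commute an infinite wedge past $(-)^{hG}$. The paper isolates exactly this claim as a separate lemma (Lemma~\ref{lem: bounded above spectra and ht}) and proves it in detail, whereas you only gesture at the descent spectral sequence.

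That gesture is where your argument is incomplete. Saying that ``only finitely many homotopy groups per summand contribute in each total degree'' does not by itself let you exchange $\bigvee$ with $(-)^{hG}$: you still need $H^*(BG;\bigoplus_i M_i)\cong \bigoplus_i H^*(BG;M_i)$ on the $E_2$-page, and you need to know the coefficient systems are untwisted. The paper's proof supplies both ingredients explicitly. Connectedness of $G$ forces the $G$-action on any Eilenberg--MacLane spectrum to be trivial (since $B\mathrm{haut}$ of such a spectrum is a $K(\pi,1)$ and $\pi_1(BG)=0$), so $(-)^{hG}$ becomes $\mathcal{F}(BG_+,-)$ with constant coefficients. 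Compactness of $G$ then gives that $BG$ is of finite type, and the universal coefficient theorem provides the desired commutation of cohomology with direct sums. One then inducts up the Postnikov tower. Both hypotheses on $G$ are doing genuine work here and should be named in your argument.
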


The fact that homotopy orbits, fixed points, and the Tate construction can be written as a limit over the Postnikov truncations on the inside is due to the fact that taking homotopy orbits of a $k$-connective map is still $k$ connective. Rewriting the fixed points and Tate construction of the induced action in terms of the appropriate subgroup and representation sphere is again due to the Wirthm\"uller isomorphism, and so the content of this lemma is in the fact that for $Y_i$ uniformly bounded above and below we get that the homotopy fixed points will commute with direct sums.

\begin{lem}[Lemma 3.1.3, \cite{My_Thesis}]\label{lem: bounded above spectra and ht}
Let $Y_i$, $i\in I$ be a uniformly bounded (above and below) family of Borel $G$-spectra, $G$ a compact connected Lie group. Then \[\left(\bigvee_{i\in I}Y_i\right)^{hG}\simeq \bigvee_{i\in I}(Y_i)^{hG}.\]
\end{lem}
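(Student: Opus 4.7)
The plan is to reduce to the case of Eilenberg--MacLane spectra via the Postnikov filtration, using essentially that uniform boundedness guarantees only a finite tower of layers and that $BG$ is of finite type.

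First I would set up the natural comparison map $\psi \colon \bigvee_{i\in I} (Y_i)^{hG} \to (\bigvee_{i\in I} Y_i)^{hG}$ induced by the inclusions $Y_i \hookrightarrow \bigvee_{i\in I} Y_i$ and the functoriality of $(-)^{hG}$. Writing $N \leq k \leq M$ for the uniform range in which $\pi_k Y_i$ can be nonzero, the claim is that $\psi$ is an equivalence, and I would proceed by induction on $M - N$.

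For the base case, assume each $Y_i$ is concentrated in a single degree, say $Y_i \simeq \Sigma^n HM_i$. Then $(\bigvee_i Y_i)^{hG}$ and $\bigvee_i (Y_i)^{hG}$ both have homotopy computed by cohomology of $BG$ with the appropriate coefficients. Since $G$ is a compact Lie group, $BG$ admits a CW structure with only finitely many cells in each dimension, so the cellular cochain complex satisfies $C^k(BG; \bigoplus_i M_i) \simeq \bigoplus_i C^k(BG; M_i)$ (a finite product of direct sums is a direct sum of finite products). Passing to cohomology and using that direct sums of chain complexes commute with taking homology, I get the desired isomorphism on each $\pi_*$, which identifies $\psi$ as an equivalence in this base case.

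For the inductive step, I would use the Postnikov tower. Each $Y_i$ fits in a fiber sequence $\tau_{\leq k-1} Y_i \to \tau_{\leq k} Y_i \to \Sigma^k H\pi_k Y_i$, and since wedges and fiber/cofiber sequences interact well in a stable setting, wedging these over $i\in I$ produces a fiber sequence
\[
\bigvee_i \tau_{\leq k-1} Y_i \to \bigvee_i \tau_{\leq k} Y_i \to \bigvee_i \Sigma^k H\pi_k Y_i.
\]
Applying $(-)^{hG}$ preserves fiber sequences (it is a limit), and the comparison map $\psi$ gives a map of these fiber sequences before and after wedging. By induction the comparisons for $\tau_{\leq k-1} Y_i$ (one fewer Postnikov layer) and the single-degree piece $\Sigma^k H\pi_k Y_i$ (base case) are equivalences, so by two-out-of-three the comparison for $\tau_{\leq k} Y_i$ is as well. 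Uniform boundedness ensures this induction terminates in at most $M - N + 1$ steps and that at each step the truncation $\tau_{\leq k}$ agrees with $Y_i$ itself once $k \geq M$.

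The main obstacle I anticipate is the base case, specifically ensuring that the direct-sum-to-product swap for $H^*(BG; -)$ is clean: this is where finite type of $BG$ (compactness of $G$) really gets used, since for an infinite CW complex of finite type the cellular cochains are a product over cells of each dimension but that product is finite in each cohomological degree. Everything else is formal manipulation of fiber sequences, the Postnikov tower, and the comparison map.
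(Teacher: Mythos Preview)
Your proposal is correct and follows the same strategy as the paper: induct on the number of Postnikov layers to reduce to the Eilenberg--MacLane case, where finite type of $BG$ (from compactness of $G$) makes cohomology commute with the direct sum. One small slip: the Postnikov fiber sequence runs $\Sigma^k H\pi_k Y_i \to \tau_{\leq k} Y_i \to \tau_{\leq k-1} Y_i$, not in the order you wrote, though this does not affect the two-out-of-three argument; the paper also explicitly uses connectedness of $G$ to show the $G$-action on each Eilenberg--MacLane piece is trivial before invoking the universal coefficient theorem, whereas your cellular-cochain argument sidesteps that step.
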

\begin{proof}
This argument will follow similarly to several results in \cite{Nikolaus_Scholze}. We will first consider the Eilenberg-MacLane spectra case. Assume that each $Y_i$ is an Eilenberg-MacLane spectrum $\Sigma^{n_i}M_i$. Since the homotopy groups of $\bigvee_{i\in I}Y_i$ are concentrated in finite degrees, this is a finite wedge of spectra with homotopy concentrated in a single degree. Homotopy fixed points will commute with the finite wedge sums, and we may therefore assume without loss of generality that $n_i=0$ for all $i\in I$.

Thus we get that $\bigvee_{i\in I}Y_i\simeq \bigoplus_{i\in I}M_i$, and the $G$-action on this spectrum must be trivial. To see this, note that $\operatorname{haut}(\bigoplus_{i\in I}M_i)$ is discrete as the automorphism space of an Eilenberg-MacLane spectrum, and so $B\operatorname{haut}(\bigoplus_{i\in I}M_i)$ is a $K(\pi,1)$ space. Then \[[BG, K(\pi,1)]=\hom(\pi_1(BG), \pi)/\{\textnormal{conjugation by elements of }\pi\}=0\] since $G$ is connected.

Since the action is trivial, we then have equivalences \[\left(\bigoplus_{i\in I}M_i\right)^{hG}\simeq \mathcal{F}_G(EG_+, \bigoplus_{i\in I}M_i)\simeq \mathcal{F}(BG_+, \bigoplus_{i\in I}M_i)\] which has homotopy groups $\pi_*\left(\mathcal{F}(BG_+, \bigoplus_{i\in I}M_i)\right)\cong H^{-*}(BG; \bigoplus_{i\in I}M_i)$. We may now apply the universal coefficient theorem to get the diagram \[\begin{tikzcd}
0 \arrow[d]                                                                         &  & 0 \arrow[d]                                                     \\
{\bigoplus_{i\in I}\mathrm{Ext}^1_{\ZZ}(H_{*-1}(BG;\ZZ), M_i)} \arrow[d] \arrow[rr, "\cong"] &  & {\mathrm{Ext}^1_{\ZZ}(H_{*-1}(BG;\ZZ), \bigoplus_{i\in I}M_i)} \arrow[d] \\
\bigoplus_{i\in I}H^*(BG;M_i) \arrow[d] \arrow[rr]                                  &  & H^*(BG;\bigoplus_{i\in I}M_i) \arrow[d]                         \\
{\bigoplus_{i\in I}\hom(H_*(BG;\ZZ), M_i)} \arrow[rr, "\cong"] \arrow[d]            &  & {\hom(H_*(BG;\ZZ), \bigoplus_{i\in I}M_i)} \arrow[d]            \\
0                                                                                   &  & 0                                   
\end{tikzcd}\]
where the columns are exact and the maps from the left to the right are induced by the comparison map in question. Since $G$ is compact, it has finitely generated homotopy groups, and then the classifying space fibration shows that $BG$ also has finitely generated homotopy groups. Hence $BG$ is also finite type, and the top and bottom horizontal maps in the above diagram are isomorphisms, and so the middle map is as well. In particular, the comparison map \[\bigvee_{i\in I}\left(M_i\right)^{hG}\to \left(\bigvee_{i\in I}M_i\right)^{hG}\] is a weak equivalence. 

In order to go from the Eilenberg-MacLane case to the general case, suppose the statement is true for spectra bounded below by $k$, and above by $k+n$. When $n=0$, this is the Eilenberg-MacLane case. Now, for $Y_i$ a family of spectra bounded below by $k$ and above by $k+n+1$ we have a fiber sequence \[\left(\bigvee_{i\in I}\Sigma^{n+k+1}\pi_{n+k+1}(Y_i)\right)^{hG}\simeq\left(\Sigma^{n+k+1}\pi_{n+k+1}\left(\bigvee_{i\in I}Y_i\right)\right)^{hG}\to \left(\bigvee_{i\in I}Y_i\right)^{hG}\to \left(\bigvee_{i\in I}\tau_{\leq n+k}Y_i\right)^{hG}.\] The sum commutes with homotopy fixed points in the left and the right terms, since the left term is Eilenberg-MacLane and the left term is the inductive hypothesis. Thus by naturality and the fiber sequence, the sum in the middle term also commutes with homotopy fixed points. By induction we are done
\end{proof}

From this we get the following.

\begin{lem}[Compare to Lemma 3.3.4 of \cite{My_Thesis}]\label{lem: tc- and tp for affine line}
There are equivalences \[\tc^{-}(R[t];\ZZ_p)\simeq  \tc^{-}(R;\ZZ_p)\vee \Sigma\left(\bigvee_{n\in \ZZ_{\geq 1}}\left(\thh(R;\ZZ_p)\right)^{hC_{n}}\right)^\wedge_{(p,d)}\]
and \[\tp(R[t];\ZZ_p)^\wedge_p\simeq \tp(R;\ZZ_p)\vee \Sigma\left(\bigvee_{n\in \ZZ_{\geq 1}}\left(\thh(R;\ZZ_p)\right)^{tC_{n}}\right)^\wedge_{(p,d)}\] where $R$ is a perfectoid ring, $d$ is an orientation of $R$.
\end{lem}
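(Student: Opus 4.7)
The plan is to begin from the decomposition $\thh(R[t];\ZZ_p)\simeq (\thh(R;\ZZ_p)\vee \bigvee_{n\geq 1}\thh(R;\ZZ_p)\wedge (\TT/C_n)_+)^\wedge_p$ already recorded in this section. Splitting off the $n=0$ summand and applying $(-)^{h\TT}$ (resp.\ $(-)^{t\TT}$) produces the $\tc^{-}(R;\ZZ_p)$ (resp.\ $\tp(R;\ZZ_p)$) factor, so the task reduces to identifying the fixed points and Tate construction of the infinite wedge. Since this wedge is uniformly bounded below but not uniformly bounded above, $(-)^{h\TT}$ and $(-)^{t\TT}$ need not commute with it, which is precisely the issue addressed by Lemma~\ref{lem: Nygaard completion of direct sum}.

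Applying Lemma~\ref{lem: Nygaard completion of direct sum} with $G=\TT$ and $H_n=C_n$, and using that $T_{[C_n]}(\TT/C_n)$ is one-dimensional with trivial $C_n$-action (so that the Wirthm\"uller isomorphism reduces the induced fixed points to a single suspension), one obtains
\[\Bigl(\bigvee_{n\geq 1}\thh(R;\ZZ_p)\wedge (\TT/C_n)_+\Bigr)^{h\TT}\simeq \lim_k\Sigma\bigvee_{n\geq 1}\bigl(\tau_{\leq k}\thh(R;\ZZ_p)\bigr)^{hC_n},\]
and the analogous formula for the Tate construction. Lemma~\ref{lem: bounded above spectra and ht} further allows the fixed points and Tate to be taken termwise inside the wedge, since each $\tau_{\leq k}\thh(R;\ZZ_p)$ is now bounded above.

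The main remaining obstacle is to identify this Postnikov limit with the $(p,d)$-completion of $\bigvee_{n\geq 1}\thh(R;\ZZ_p)^{hC_n}$, and similarly for the Tate version. For a perfectoid ring $R$ with orientation $d$, the BMS description $\pi_*\thh(R;\ZZ_p)\cong R[u]$ with $|u|=2$ shows that the Postnikov tower is controlled by powers of $u$, and under the motivic/Nygaard filtration of \cite{BMS2} the class $u$ is detected by $d$ in the associated graded of $\tc^{-}$ and $\tp$. Consequently the Postnikov inverse system and the $d$-adic inverse system on the $(p,d)$-completion are cofinal, so the Postnikov limit coincides with the $d$-adic completion; combined with the $p$-completion already in place, this yields the $(p,d)$-completion appearing in the statement. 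The same argument with $\thh(R;\ZZ_p)^{tC_n}$ in place of $\thh(R;\ZZ_p)^{hC_n}$ gives the corresponding formula for $\tp$, completing the proof.
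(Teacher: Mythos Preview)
Your proposal follows essentially the same route as the paper: split off the constant summand, apply Lemma~\ref{lem: Nygaard completion of direct sum} to pass the wedge through $(-)^{h\TT}$ and $(-)^{t\TT}$ at the cost of a Postnikov limit, identify the Postnikov tower with the $u$-adic tower via $\tau_{\leq 2k}\thh(R;\ZZ_p)\simeq \thh(R;\ZZ_p)/u^k$, and then pass from $u$-adic to $d$-adic completion. (Your invocation of Lemma~\ref{lem: bounded above spectra and ht} is redundant: the statement of Lemma~\ref{lem: Nygaard completion of direct sum} already places the fixed points and Tate constructions inside the wedge.)

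The one place where your argument is thinner than the paper's is the last step. You assert that the Postnikov and $d$-adic towers are ``cofinal'' because $u$ is detected by $d$; but since $uv=d$ in $\tc^{-}_*(R;\ZZ_p)$ one only has maps in one direction, $(-)/d^k\to (-)/u^k$, so literal cofinality of pro-systems is not what is happening. The paper instead shows these comparison maps have connectivity tending to infinity with $k$ (uniformly in the wedge index $n$), reducing via \cite[Corollary~5.8]{Me} to the single map $\tc^{-}(R;\ZZ_p)/d^k\to \tc^{-}(R;\ZZ_p)/u^k$, which is an equivalence below degree $k$ by inspection of the homotopy fixed point spectral sequence. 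Replacing your cofinality sentence with this connectivity argument makes the proof complete.
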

\begin{proof}
By Lemma~\ref{lem: Nygaard completion of direct sum} we have equivalences of the form \[\tc^{-}(R[t];\ZZ_p)\simeq \tc^{-}(R;\ZZ_p)\vee \Sigma\left(\lim_{k\to \infty}\bigvee_{n\in \ZZ_{\geq 1}}\left(\tau_{\leq 2k}\thh(R;\ZZ_p)\right)^{hC_n}\right)^\wedge_p\] and \[\tp(R[t];\ZZ_p)\simeq \tp(R;\ZZ_p)\vee \Sigma\left(\lim_{k\to \infty}\bigvee_{n\in \ZZ_{\geq 1}}\left(\tau_{\leq 2k}\thh(R;\ZZ_p)\right)^{tC_n}\right)^\wedge_p\] and so it is enough to show that the limit of the right hand terms amounts to $d$-adic completion.

To this end, note that $\tau_{\leq 2k}\thh(R;\ZZ_p)\simeq \thh(R;\ZZ_p)/u^k$ as a $\tc^{-}(R;\ZZ_p)$-module by \parencite[Theorem 6.1]{BMS2}. Then 
\begin{align*}
    \lim_{k\to \infty}\bigvee_{n\in \ZZ_{\geq 1}} (\tau_{\leq 2k}\thh(R;\ZZ_p))^{hC_{n}}    &\simeq \lim_{k\to \infty}\bigvee_{n\in \ZZ_{\geq 1}}(\thh(R;\ZZ_p)/u^k)^{hC_{p^n}}\\
                                &\simeq \lim_{k\to \infty}\left(\bigvee_{n\in \ZZ_{\geq 1}}\thh(R;\ZZ_p)^{hC_n}\right)/u^k\\
                                &\simeq \left(\bigvee_{n\in \ZZ_{\geq 1}}\thh(R;\ZZ_p)^{hC_n}\right)^\wedge_u
\end{align*} and similarly for the Tate construction. To see that this also agrees with the $d$-adic completion, it is enough to show that the maps $\left(\bigvee_{n\in \ZZ_{\geq 1}}\thh(R;\ZZ_p)^{hC_n}\right)/d^k\to \left(\bigvee_{n\in \ZZ_{\geq 1}}\thh(R;\ZZ_p)^{hC_n}\right)/u^k$ induced by the fact that $\tc^{-}(R;\ZZ_p)\to \tc^{-}(R;\ZZ_p)/u^k$ sends $d^k$ to zero have unbounded connectivity. For this note that it is enough to show this on each summand $\thh(R;\ZZ_p)^{hC_{n}}/d^k\to \thh(R;\ZZ_p)^{hC_n}/u^k$ uniformly. By \parencite[Corollary 5.8]{Me} we can rewrite this as the map \[(\tc^{-}(R;\ZZ_p)/d^k)/\Tilde{d}_{v_p(n)}v\to (\tc^{-}(R;\ZZ_p)/u^k)/\Tilde{d}_{v_p(n)}v\] and by the short exact sequences defining the outermost quotients it is then enough to show that \[\tc^{-}(R;\ZZ_p)/d^k\to \tc^{-}(R;\ZZ_p)/u^k\] has unbounded connectivity in $k$. This map is an equivalence below degree $k$ as can be seen from the homotopy fixed point spectral sequence (recall that $d$ is a generator of $E^2_{-2,2}$ by definition.). The Tate construction result follows similarly.
\end{proof}

We are now ready to calculate the topological cyclic homology of the affine line in positive characteristic.

\begin{lem}\label{lem: homotopy invariance for perfect fp algebras}
Let $k$ be a perfect $\FF_p$-algebra. Then the map \[\tc(k;\ZZ_p)\to \tc(k[t];\ZZ_p)\] is $(-2)$-truncated.
\end{lem}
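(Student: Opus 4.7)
My plan is to combine Lemma~\ref{lem: tc- and tp for affine line} with the Nikolaus--Scholze description of topological cyclic homology from Theorem~\ref{thm: ns main result}. Because $k$ is a perfect $\FF_p$-algebra we may take the orientation to be $d=p$, so the $(p,d)$-completion appearing in Lemma~\ref{lem: tc- and tp for affine line} reduces to $p$-completion. Splitting off the $\tc(k;\ZZ_p)$ summand via the section $t\mapsto 0$, the fiber of $\tc(k;\ZZ_p)\to\tc(k[t];\ZZ_p)$ is the loops on $\tc(k[t],(t);\ZZ_p)$, which by Theorem~\ref{thm: ns main result} is the fiber of
\[(\phi_p-can)\colon \Sigma\Bigl(\bigvee_{n\geq 1}\thh(k;\ZZ_p)^{hC_n}\Bigr)^\wedge_p\longrightarrow \Sigma\Bigl(\bigvee_{n\geq 1}\thh(k;\ZZ_p)^{tC_n}\Bigr)^\wedge_p.\]
Showing this map is $(-2)$-truncated is equivalent to showing $\tc_j(k[t],(t);\ZZ_p)=0$ for all $j\geq 0$.

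First I would reindex by writing $n=mp^\nu$ with $m\in J_p$ and $\nu\geq 0$. Since every spectrum in sight is $p$-complete and $|C_m|$ is a unit in $\ZZ_p$, the inclusion $C_{p^\nu}\hookrightarrow C_{mp^\nu}$ induces equivalences on the homotopy fixed-point and Tate spectra (with the Tate term vanishing for $\nu=0$). The canonical map respects both the $m$ and $\nu$ indices, while the Frobenius fixes $m$ and shifts $\nu\mapsto\nu+1$. Hence the map decomposes as a wedge indexed by $m\in J_p$ of column maps
\[\bigoplus_{\nu\geq 0}\thh(k;\ZZ_p)^{hC_{p^\nu}}\xrightarrow{\phi_p-can}\bigoplus_{\nu\geq 1}\thh(k;\ZZ_p)^{tC_{p^\nu}}.\]
Using \parencite[Corollary 5.8]{Me} to identify the even homotopy groups of source and target with Witt vector groups of $k$, and the maps $can$, $\phi_p$ with the standard restriction, Frobenius, and Verschiebung maps, a telescoping argument analogous to the proof of Lemma~\ref{lem: tc calculation part 1}---crucially using that $\phi_p\colon\thh(k;\ZZ_p)\to\thh(k;\ZZ_p)^{tC_p}$ is an equivalence for perfect $\FF_p$-algebras, a consequence of the Nikolaus--Scholze identification of the cyclotomic structure---will show that on each column $\phi_p-can$ is both injective and surjective on all non-negative even homotopy groups, while odd homotopy groups vanish. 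Since fibers commute with direct sums of spectra and $p$-completion preserves coconnectivity, the conclusion passes to the $p$-completion of the wedge.

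The main obstacle is that Lemma~\ref{lem: tc- and tp for affine line} produces a $p$-completed wedge sum rather than a product, so the column-wise analysis must be compatible with the wedge. This is exactly the content of Lemmas~\ref{lem: Nygaard completion of direct sum} and~\ref{lem: bounded above spectra and ht}: after Postnikov truncation the wedge of uniformly bounded summands commutes with the relevant homotopy fixed-point and Tate constructions, so that column-wise fiber computations can be assembled into a single statement. Once this compatibility has been set up, the column-by-column analysis is a direct Witt vector calculation modelled on the arguments of Section~\ref{sec: tc calculation}.
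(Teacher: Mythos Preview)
Your proposal is correct and follows essentially the same route as the paper's proof: split off $\tc(k;\ZZ_p)$ via $t\mapsto 0$, apply Lemma~\ref{lem: tc- and tp for affine line}, reindex $n=m p^\nu$ with $m\in J_p$, and run a column-by-column Witt vector argument using that $\phi_p$ is an equivalence in nonnegative degrees. The one place you and the paper differ is in justifying why the uncompleted column analysis controls the $p$-completed map: the paper observes that when $d=p$ the map $\phi_p-can$ is $\ZZ_p$-linear, so its inverse is automatically continuous and survives $p$-completion; you instead argue that $p$-completion is exact and preserves coconnectivity of the fiber. Both arguments are valid, though note that your invocation of Lemmas~\ref{lem: Nygaard completion of direct sum} and~\ref{lem: bounded above spectra and ht} in the last paragraph is slightly off-target---those lemmas establish Lemma~\ref{lem: tc- and tp for affine line} itself rather than the passage from uncompleted to completed fibers, which is really handled by your earlier sentence about exactness and coconnectivity.
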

\begin{proof}
By \parencite[Proposition II.1.9]{Nikolaus_Scholze} we have a fiber sequence \[\tc(k[t];\ZZ_p)\to \tc^{-}(k[t];\ZZ_p)\xrightarrow{\phi_p-can}\tp(k[t];\ZZ_p)\] and so we can use Lemma~\ref{lem: tc- and tp for affine line} to get the fiber sequence \[\tc(k[t];\ZZ_p)\to \tc^{-}(k;\ZZ_p)\vee \Sigma\left(\bigvee_{n\in \ZZ_{\geq 1}}\thh(k;\ZZ_p)^{hC_n}\right)^\wedge_p \xrightarrow{\phi-can}\tp(k;\ZZ_p)\vee \Sigma\left(\bigvee_{n\in \ZZ_{\geq 1}}\thh(k;\ZZ_p)^{tC_n}\right)^\wedge_p\] where $can$ respects the wedge sum grading, and $\phi_p$ multiplies the wedge sum grading by $p$. From the splitting $\tc(k;\ZZ_p)\to \tc(k[t];\ZZ_p)\to \tc(k;\ZZ_p)$ given by the inclusion $k\to k[t]$ followed by the map $t\mapsto 0$ we get that the map $\phi_p-can$ on the first summand has $\tc(k;\ZZ_p)$ as its fiber. Hence to prove this result it is enough to show that $\Sigma\left(\bigvee_{n\in \ZZ_{\geq 1}}\thh(k;\ZZ_p)^{hC_n}\right)^\wedge_p\xrightarrow{\phi_p-can}\Sigma\left(\bigvee_{n\in \ZZ_{\geq 1}}\thh(k;\ZZ_p)^{tC_n}\right)^\wedge_p$ is a weak equivalence in nonnegative degrees and an injection in degree $-1$. This map is then the suspension and $p$-completion of the map $\bigvee_{n\in \ZZ_{\geq 1}}\thh(k;\ZZ_p)^{hC_n}\to \bigvee_{n\in \ZZ_{\geq 1}}\thh(k;\ZZ_p)^{tC_n}$ which on the $n^{\textrm{th}}$ summand of the source is the map \[\thh(k;\ZZ_p)^{hC_n}\xrightarrow{\phi_p^{hC_n}\vee(-can)}\thh(k;\ZZ_p)^{tC_{pn}}\vee \thh(k;\ZZ_p)^{tC_n}\] which can be seen from \parencite[Proposition 3]{Hesselholt_Nikolaus}. 

It is now enough to show that the uncompleted map is an equivalence\footnote{This is the difference between the positive characteristic case versus the general case. When $d=p$ the map $\phi_p-can$ will be a $\ZZ_p$-module map and so its inverse will automatically be continuous. On the other hand the less $p$-torsion $R$ has the more linearly independent $d$ and $\phi^{-1}(d)$ will be, so it is not obvious to the author why the inverse we construct will be continuous.} in degrees nonnegative degrees and injective in degree $-2$. From \parencite[Section IV.4]{Nikolaus_Scholze} we have that $\phi_p$ is $(-1)$-coconnected, and since $\thh(k;\ZZ_p)_{hC_n}$ is connective it follows that $can$ is connective. Upon taking homotopy groups and using the same decomposition $m=m'p^\nu$ as described at the beginning of Subsection~\ref{ssec: relative tc} we are then getting  \[\bigoplus_{\nu =0}^{\infty}\bigoplus_{m\in J_p}W_{\nu+1}(k)\to \bigoplus_{\nu=0}^\infty \bigoplus_{m\in J_p}W_\nu(k)\] in nonnegative even homotopy groups, \[\bigoplus_{\nu=0}^\infty\bigoplus_{m\in J_p}W_\nu(k)\to \bigoplus_{\nu=0}^\infty\bigoplus_{m\in J_p}W_\nu(k)\] in negative even homotopy groups, and zero in odd homotopy groups. The map $\phi_p$ shifts $\nu$ up by one and does not change the internal direct sum, and can does not change either direct sum degree. We will show that in positive degrees this map is an isomorphism, the argument for the negative groups being very similar.

In nonnegative degrees the map $\phi_p$ induces an equivalence from the $\nu$-summand on the source to the $\nu+1$-summand in the target. Suppose that $x\in \ker(\phi_p-can)$ in a given degree. For a given $t\in \ZZ_{\geq 0}$ let $(-)_t:\bigoplus_{\nu=0}^\infty \bigoplus_{m\in J_p}W_{\nu+1}(k)\to \bigoplus_{m\in J_p}W_{t+1}(k)$ be the usual projection map. Since we are working with direct sums, there exists a $t\in \ZZ_{\geq 0}$ such that for all $\nu> t$, $(x)_\nu =0$. If $x\neq 0$, then there also exists a $t$ as above with $x_t\neq 0$. Then since $\phi_p$ is an isomorphism we see that $\phi_p(x)_{t+1}\neq 0$, but then $(\phi_p(x)-can(x))_{t+1}=\phi_p(x_t)-can(x_{t+1})=\phi_p(x_t)\neq 0$, a contradiction. Thus $\phi_p-can$ is injective. 

Now let $y\in \bigoplus_{\nu=0}^\infty\bigoplus_{m\in J_p}W_\nu(k)$, and let $t\in \ZZ_{\geq 0}$ be such that $y_\nu =0$ for all $\nu>t$, and $y_t\neq 0$. If no such $t$ exists then $y=0$ and has pre-image $0$. Notice also that for $y\neq 0$ that $t\geq 1$ since the $\nu=0$ summand in the target is zero. Inductively we define $x\in \bigoplus_{\nu=0}^\infty \bigoplus_{m\in J_p}W_{\nu+1}(k)$ by $x_\nu=0$ for all $\nu\geq t$ and $x_{t-s}=\phi_p^{-1}(y_{t-s+1}+can(x_{t-s+1}))$. Then in any given degree $(\phi_p(x)-can(x))_{t-s+1}=\phi_p(x_{t-s})-can(x_{t-s+1})=y_{t-s+1}+can(x_{t-s+1})-can(x_{t-s+1})=y_{t-s+1}$ and so $x$ is a pre-image of $y$ and $\phi_p-can$ is surjective.
\end{proof}

\begin{rem}
For any ring the topological cyclic homology will always be concentrated in degrees greater than or equal to $-1$. While the above argument might seem to suggest that in these degrees $N\tc(k[t])$ vanishes, this is before $p$-completion and in fact the homotopy group in degree $-2$ will have a non-zero derived contribution to $N\tc_{-1}(k[t])$. This might seem a bit mysterious from the perspective taken in this paper, but using topological restriction homology instead shows that these elements are coming from the cokernel of the map $W(k[t])\xrightarrow{F-1}W(k[t])$ which do not come from the cokernel of $W(k)\to W(k)$. From this point of view the fact that there are nonzero elements exactly corresponds to the fect that you cannot write $t\in k[t]$ as $x^p-x$ for any $x\in k[t]$. This was pointed out to me by Elden Elmanto.
\end{rem}

\subsection{Decompleting topological negative cyclic and periodic homology}\label{ssec: homotopy invariance charz}
In this Subsection we will play the description in Lemma~\ref{lem: tc- and tp for affine line} and the BMS filtrations from \cite{BMS2} off each other to get computations in the mixed characteristic case. First we will use Lemma~\ref{lem: tc- and tp for affine line} to get a computation of the homotopy groups of the topological negative cyclic homology and topological periodic homology in the $p$-torsion free case.

\begin{lem}\label{lem: refined tc- and tp computation for affine line}
Let $R$ be a $p$-torsion free perfectoid ring with orientation $d$. Then there are isomorphisms \[N\tc^{-}_{2k}(R[t];\ZZ_p)\cong N\tp_{2k}(R[t];\ZZ_p)\cong 0\] for all $k\in \ZZ$. There are also isomorphisms \[N\tc^{-}_{2k+1}(R[t];\ZZ_p)\cong \left(\bigoplus_{j\geq 1}A_{inf}(R)/d\cdot \Tilde{d}_{v_p(j)}\right)^\wedge_{(p,d)}\] for $k\geq 0$, \[N\tc^{-}_{2k+1}(R[t];\ZZ_p)\cong \left(\bigoplus_{j\geq 1}A_{inf}(R)/\Tilde{d}_{v_p(j)}\right)^\wedge_{(p,d)}\] for all $k<0$, and \[N\tp_{2k+1}(R[t];\ZZ_p)\cong \left(\bigoplus_{j\geq 1}A_{inf}(R)/\Tilde{d}_{v_p(j)}\right)^\wedge_{(p,d)}\] for all $k\in \ZZ$.
\end{lem}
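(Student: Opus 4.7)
The plan is to start from Lemma~\ref{lem: tc- and tp for affine line}, which identifies
\[N\tc^{-}(R[t];\ZZ_p)\simeq \Sigma\bigl(\bigvee_{n\geq 1}\thh(R;\ZZ_p)^{hC_n}\bigr)^\wedge_{(p,d)}\]
and an analogous formula for $N\tp(R[t];\ZZ_p)$; one then computes the homotopy of each wedge summand and reassembles. The parity shift coming from the outer $\Sigma$ accounts for the vanishing of the even homotopy groups: once each $\thh(R;\ZZ_p)^{hC_n}$ and $\thh(R;\ZZ_p)^{tC_n}$ is shown to have homotopy concentrated in even degrees, the vanishing of $N\tc^{-}_{2k}$ and $N\tp_{2k}$ is automatic.

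For each $n$, the inclusion $C_{p^{v_p(n)}}\hookrightarrow C_n$ is a $p$-adic equivalence, so after $p$-completion $\thh(R;\ZZ_p)^{hC_n}\simeq \thh(R;\ZZ_p)^{hC_{p^{v_p(n)}}}$ and similarly for the Tate construction; this reduces the computation to prime-power level. For $R$ a $p$-torsion free perfectoid ring with orientation $d$, \parencite[Corollary 5.8]{Me} (a repackaging of \parencite[Section 6]{BMS2}) then gives the even homotopy groups: $\pi_{2k}\thh(R;\ZZ_p)^{hC_{p^s}}$ is $A_{inf}(R)/(d\cdot\tilde{d}_s)$ for $k\geq 0$ and $A_{inf}(R)/\tilde{d}_s$ for $k<0$, while $\pi_{2k}\thh(R;\ZZ_p)^{tC_{p^s}}$ is $A_{inf}(R)/\tilde{d}_s$ for all $k\in\ZZ$, with odd homotopy vanishing because $R$ has no $p$-torsion. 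Re-indexing the wedge so that the $j$th summand corresponds to $C_{p^{v_p(j)}}$ reproduces the direct sums appearing in the statement, before the $(p,d)$-completion is applied.

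The last step, which I expect to be the main technical obstacle, is to commute the homotopy functor past the derived $(p,d)$-completion of the wedge and identify the outcome with the $(p,d)$-adic completion of the direct sum of homotopy groups. Since each summand's homotopy is already $p$-complete and annihilated by a power of $\tilde{d}_s$, and since $A_{inf}(R)$ is $(p,d)$-adically complete with $\tilde{d}_s$ a regular element, the relevant Milnor sequences should degenerate, yielding the stated identification. Once this interchange is justified, the outer $\Sigma$ converts the even homotopy of the wedge into the odd homotopy of $N\tc^{-}$ and $N\tp$, matching the formulas in the statement.
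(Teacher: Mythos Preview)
Your overall plan is exactly the paper's: invoke Lemma~\ref{lem: tc- and tp for affine line}, replace $C_n$ by $C_{p^{v_p(n)}}$, read off the even homotopy of $\thh(R;\ZZ_p)^{hC_{p^s}}$ and $\thh(R;\ZZ_p)^{tC_{p^s}}$ from \parencite[Corollary~5.8]{Me}, and then argue that the derived $(p,d)$-completion of the wedge is computed by the classical $(p,d)$-completion on homotopy groups. The paper does precisely this, so up to the last step your proposal is the paper's proof.

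The gap is in your justification of the completion step. You write that each summand is ``annihilated by a power of $\tilde{d}_s$'' and that ``$\tilde{d}_s$ is a regular element'', but neither observation controls the \emph{$d$-adic} completion of the direct sum: the elements $\tilde{d}_s=\phi(d)\cdots\phi^s(d)$ are built from Frobenius twists of $d$, not from $d$ itself, and the exponent $s=v_p(j)$ is unbounded as $j$ varies, so being killed by some $\tilde{d}_{v_p(j)}$ gives no uniform bound on $d$-power torsion. What is actually needed, and what the paper proves, is that the direct sums $\bigoplus_j A_{inf}(R)/\tilde{d}_{v_p(j)}$ and $\bigoplus_j A_{inf}(R)/d\tilde{d}_{v_p(j)}$ are $p$-torsion free and have \emph{uniformly} bounded $d^{\infty}$-torsion. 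The $p$-torsion freeness is an induction using $A_{inf}(R)/d\cong R$ and the hypothesis that $R$ is $p$-torsion free. For the $d$-torsion, the key computation is the congruence $\tilde{d}_n\equiv (\mathrm{unit})\cdot p^n\pmod{d}$, obtained by expanding $\phi^k(d)=\phi^{k-1}(d)^p+p\,\delta(\phi^{k-1}(d))$; combined with $p$-torsion freeness of $R$ this shows $A_{inf}(R)/\tilde{d}_n$ has \emph{no} $d$-torsion at all, and then a short argument gives that the $d^{\infty}$-torsion in $A_{inf}(R)/d\tilde{d}_n$ coincides with its $d$-torsion. These two facts are what make the derived completion classical; your sketch does not supply them.
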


\begin{proof}
From the description in Lemma~\ref{lem: tc- and tp for affine line} it is enough to show that the $A_{inf}(R)$-modules showing up before the completion have no higher derived completions. For this it is enough to show that the $A_{inf}(R)$ modules $\bigoplus_{j\geq 1}A_{inf}(R)/d\cdot \Tilde{d}_{v_p(j)}$ and $\bigoplus_{j\geq 1}A_{inf}(R)/\Tilde{d}_{v_p(j)}$ have bounded $d^{\infty}$ torsion and are $p$-torsion free. 

For the second statement it is enough to show that $A_{inf}(R)/d\cdot \Tilde{d}_n$ and $A_{inf}(R)/\Tilde{d}_n$ have no $p$-torsion for all $n$. To this end suppose that $x\in A_{inf}(R)/d\cdot \Tilde{d}_n$ is $p$-torsion. Let $y$ be a lift of $x$ in $A_{inf}(R)$ so that $py=zd\Tilde{d}_n$ for some $z\in A_{inf}(R)$. In particular $py=0\mod d$, but $A_{inf}(R)/d\cong R$ which by assumption has no $p$-torsion. Consequently $y=dy_1$ for some $y_1\in A_{inf}(R)$. Since $A_{inf}(R)$ has no $d$-torsion it follows that $py_1=z\Tilde{d}_n$. Inductively we have that $y=d\Tilde{d}_n y_n$ and so $x=0$. The same argument works for $A_{inf}(R)/\Tilde{d}_n$, so there is no $p$-torsion in any of the homotopy groups. 

It remains to show that $\bigoplus_{j\geq 1}A_{inf}(R)/\Tilde{d}_{v_p(j)}$ and $\bigoplus_{j\geq 1}A_{inf}(R)/d\Tilde{d}_{v_p(j)}$ have bounded $d^\infty$ torsion. For this it is enough to show that $A_{inf}(R)/\Tilde{d}_n$ and $A_{inf}(R)/d\Tilde{d}_n$ have uniformly bounded $d^\infty$ torsion. In fact $A_{inf}(R)/\Tilde{d}_n$ has no $d$ torsion and $A_{inf}(R)/d\Tilde{d}_n[d^\infty]=A_{inf}(R)/d\Tilde{d}_n[d]$.

To see this, recall that $\phi(x)=x^p+p\delta(x)$, and that for $d$ an orientation $\delta(d)$ is a unit by \parencite[Remark 3.11]{BMS1}. In addition by the ideal characterization of distinguished elements in \cite[Lemma 2.25]{BS} each $\phi^k_p(d)$ are distinguished and $\delta(\phi^k(d))$ is a unit for all $k>0$. Therefore we get that mod $d$ we have the equality
\begin{align*}
    \Tilde{d}_n &= \phi(d)\phi^2(d)\ldots \phi^n(d)\\
                &= (p\delta(d))(\delta(d)^p p^p+p\delta(\phi(d)))\ldots ((\phi^{n-1}(d))^p+p\delta(\phi^{n-1}(d)))\\
                &= up^n+p^{n-1+p}k 
\end{align*}
for some unit $u\in R$ and element $k\in R$. Then since $p\in \mathrm{rad}(R)$ by $p$-completeness it follows that $up^n+kp^{n-1+k}=vp^n$ for a unit $v\in R$. In particular $\Tilde{d}_n\neq 0 \mod d$. In fact since $R$ has no $p$-torsion we even have that there is no $\Tilde{d}_n$ torsion in $R$, which is a slight generalization of \cite[Lemma 3.6]{ALB}.

The above combines to show that the kernel of the map $A_{inf}(R)/d\xrightarrow{\Tilde{d}_n} A_{inf}(R)/d$ is zero. This in turn is isomorphic to $\mathrm{Tor}_1^{A_{inf}(R)}(A_{inf}(R)/d, A_{inf}(R)/\Tilde{d}_n)$, which is equivalent to the kernel of multiplication by $d$ on $A_{inf}(R)/\Tilde{d}_n$. by induction the $d^{\infty}$ torsion of $A_{inf}(R)/\Tilde{d}_n$ is then zero. 

For the positive degree homotopy groups, note that for $xd^k=0\mod d\cdot \Tilde{d}_n$ implies that for a lift $y$ of $x$ in $A_{inf}(R)$, there exists a $z$ such that $yd^k=zd\Tilde{d}_n$. Since $d$ is a non-zero divisor, this implies that $yd^{k-1}=z\Tilde{d}_n$. The previous paragraph then shows that $z=z'd^{k-1}$ and $y=z'\Tilde{d}_n$. Consequently all the $d^k$ torsion lies in the ideal $(\Tilde{d}_n)$ and conversely all the elements in this ideal are $d$ torsion. Hence the $d$ and $d^\infty$ torsion are both $(\Tilde{d}_n)$ as desired.
\end{proof}

We will now shift focus to the BMS filtration on these theories. For this it will he helpful to have some notation. Define $\DD_{R\langle t\rangle, (t)/A_{inf}(R)}$ denote the fiber of the map $\DD_{R\langle t\rangle/A_{inf}(R)}\xrightarrow{t\mapsto 0}\DD_{R/A_{inf}(R)}$ and similarly $\mathcal{N}^{\geq i}\DD_{R\langle t\rangle, (t)/A_{inf}(R)}$ is the fiber of the map $\mathcal{N}^{\geq i} \DD_{R\langle t\rangle/A_{inf}(R)}\to \mathcal{N}^{\geq i}\DD_{R/A_{inf}(R)}$ for all $i$. Notice that these fiber sequences are in fact split by the map $R\hookrightarrow R\langle t\rangle$ and so in particular any (co)connectivity bounds for $R\langle t\rangle$ will remain true for these relative constructions.

\begin{lem}
Let $R$ be a perfectoid ring. Then $\DD_{R\langle t\rangle/ A_{inf}(R)}$ is in homological degrees $[-1,0]$.
\end{lem}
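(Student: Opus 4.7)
The plan is to reduce the degree bound on $\DD_{R\langle t\rangle/A_{inf}(R)}$ to the analogous bound on its Hodge--Tate specialization, and then to pass back via a derived Nakayama argument using derived $(p,d)$-completeness. The first key input is the computation of the $p$-completed cotangent complex $L_{R\langle t\rangle/R}$: since $R\langle t\rangle = R[t]^\wedge_p$ is $p$-completely formally smooth of relative dimension one over $R$, this cotangent complex is free of rank one, generated by $dt$, and concentrated in degree zero. Consequently $\wedge^i L_{R\langle t\rangle/R} = 0$ for $i \geq 2$, while $\wedge^0 = R\langle t\rangle$ and $\wedge^1 = R\langle t\rangle\, dt$ both sit in homological degree zero.

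Given this, I would apply the conjugate (Hodge--Tate) filtration from \cite{BMS2} to $\overline{\DD}_{R\langle t\rangle/A_{inf}(R)} := \DD_{R\langle t\rangle/A_{inf}(R)} \otimes^L_{A_{inf}(R)} A_{inf}(R)/d$: this is a complete, exhaustive filtration whose graded pieces are $\wedge^i L_{R\langle t\rangle/R}[-i]$, up to Breuil--Kisin twists. By the cotangent calculation above, only the $i=0$ and $i=1$ pieces are nonzero, contributing summands in homological degrees $0$ and $-1$ respectively. Hence $\overline{\DD}_{R\langle t\rangle/A_{inf}(R)}$ is concentrated in homological degrees $[-1, 0]$.

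To lift this bound back to $\DD_{R\langle t\rangle/A_{inf}(R)}$ itself, I would use that $\DD_{R\langle t\rangle/A_{inf}(R)}$ is derived $(p,d)$-complete. The cofiber sequence $\DD \xrightarrow{d} \DD \to \overline{\DD}$ yields a long exact sequence $\pi_{i+1}(\overline{\DD}) \to \pi_i(\DD) \xrightarrow{d} \pi_i(\DD) \to \pi_i(\overline{\DD})$. For $i \geq 1$ or $i \leq -3$, both outer terms vanish, so $d$ acts invertibly on $\pi_i(\DD)$, and derived $d$-completeness forces $\pi_i(\DD) = 0$. For $i=-2$, only the surjectivity of $d$ on $\pi_{-2}(\DD)$ is produced, but a derived $d$-complete $d$-divisible module vanishes by the standard derived Nakayama, so $\pi_{-2}(\DD) = 0$. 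The main technical obstacle is just ensuring that the conjugate-filtration graded pieces land in the claimed range (the Breuil--Kisin twists are invisible to the degree count and the filtration has only two nonzero pieces, so no spreading occurs); after that, the Nakayama step is routine.
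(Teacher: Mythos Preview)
Your proposal is correct and follows essentially the same route as the paper: bound the Hodge--Tate specialization $\DD/d$ using the conjugate filtration (the paper phrases this via $H^i(\DD/\Tilde d)\cong \Omega^i_{R\langle t\rangle/R}\{-i\}$, which is the same input), and then lift the degree bound to $\DD$ by derived completeness and Nakayama. Your write-up is in fact more careful than the paper's at the Nakayama step, separating the degrees where $d$ acts bijectively from the $i=-2$ case where only surjectivity is available and invoking the derived-complete Nakayama lemma for $d$-divisible modules.
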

\begin{proof}
By the Hodge-Tate comparison of \cite[Theorem 4.10]{BS} we have isomorphisms $H^i(\DD_{R\langle t\rangle/A_{inf}(R)}/\Tilde{d})\cong \Omega^i_{R\langle t\rangle/R}\{-i\}$. Since we are working over a base perfectoid ring $R$, there is a (noncanonical) isomorphism $\Omega^i_{R\langle t\rangle/R}\{-i\}\cong \Omega^i_{R\langle t\rangle/R}$ and therefore $H^i(\DD_{R\langle t\rangle/A_{inf}(R)}/\Tilde{d})=0$ for $i>1$. Consequently $\Tilde{d}:\DD_{R\langle t\rangle/A_{inf}(R)}\to \DD_{R\langle t\rangle/A_{inf}(R)}$ is an isomorphism in homological degrees $\leq 3$ and surjective in homological degree $-2$. By derived $\Tilde{d}$-adic completeness the result follows.
\end{proof}

\begin{cor}\label{cor: prismatic cohomology is em}
Let $R$ be a $p$-torsion free perfectoid ring. Then $\DD_{R\langle t\rangle, (t)/A_{inf}(R)}\{k\}$ is an Eilenberg-MacLane spectrum in degree $-1$ for all integers $k$.
\end{cor}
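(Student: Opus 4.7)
The plan is to use the BMS motivic filtration on $\tp$ to convert the vanishing of $N\tp_{2k}(R\langle t\rangle;\ZZ_p)$ from Lemma~\ref{lem: refined tc- and tp computation for affine line} into the vanishing of the top cohomology of the twisted relative prismatic complex. First, I would note that $\DD_{R\langle t\rangle, (t)/A_{inf}(R)}\{k\}$ lies in homological degrees $[-1,0]$ for every $k \in \ZZ$: the previous lemma gives this for $\DD_{R\langle t\rangle/A_{inf}(R)}$, and the observation there that the fiber sequence defining $\DD_{R\langle t\rangle, (t)/A_{inf}(R)}$ is split by $R \hookrightarrow R\langle t\rangle$ shows that the same degree bound persists after passing to the relative fiber. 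Twisting by the invertible, degree-zero $A_{inf}(R)$-module $\{k\}$ preserves it as well, so it suffices to show $H^{0}\bigl(\DD_{R\langle t\rangle, (t)/A_{inf}(R)}\{k\}\bigr) = 0$ for all $k$.

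To do this, I would apply the BMS motivic filtration on $\tp(R\langle t\rangle;\ZZ_p)$ and its relative version $N\tp(R\langle t\rangle;\ZZ_p)$, whose graded pieces are the relative prismatic complexes $\DD_{R\langle t\rangle, (t)/A_{inf}(R)}\{i\}[2i]$ for $i \in \ZZ$. By the degree bound above, the $i$-th graded piece is concentrated in homological degrees $[2i-1, 2i]$, and these two-degree windows are pairwise disjoint for distinct $i$. Consequently the associated spectral sequence degenerates with no room for non-trivial differentials or extensions between graded pieces, yielding
\[
N\tp_{2k}(R\langle t\rangle;\ZZ_p) \cong H^{0}\bigl(\DD_{R\langle t\rangle, (t)/A_{inf}(R)}\{k\}\bigr).
\]
Lemma~\ref{lem: refined tc- and tp computation for affine line} identifies the left-hand side as zero for $p$-torsion free perfectoid $R$, forcing the desired vanishing and completing the Eilenberg--MacLane conclusion.

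The main technical obstacle will be pinning down the precise form of the BMS motivic filtration in this relative setting over the base prism $A_{inf}(R)$, rather than over $\ZZ_p$, so that the graded pieces really are the $A_{inf}(R)$-relative prismatic complexes $\DD_{R\langle t\rangle, (t)/A_{inf}(R)}\{i\}[2i]$ appearing in the statement. Once that compatibility with \cite{BMS2} is verified, the disjoint-interval observation forces the spectral sequence to collapse on the nose and the corollary follows formally from the $\tp$-computation of the previous lemma.
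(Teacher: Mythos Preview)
Your proposal is correct and follows essentially the same argument as the paper: both use the previous lemma's degree bound $[-1,0]$, invoke the BMS spectral sequence for $N\tp$ with graded pieces $\DD_{R\langle t\rangle,(t)/A_{inf}(R)}\{i\}[2i]$, observe that the two-degree windows force collapse, identify $N\tp_{2k}\cong H^0(\DD\{k\})$, and conclude by Lemma~\ref{lem: refined tc- and tp computation for affine line}. Your ``technical obstacle'' is not really one: the paper simply cites \cite[Theorem 1.12(4)]{BMS2}, which already gives the motivic filtration on $\tp$ of a quasisyntomic $R$-algebra with graded pieces the $A_{inf}(R)$-relative prismatic complexes, so no additional verification is needed.
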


\begin{proof}
For notational convenience we will denote $H^i(\DD_{R\langle t\rangle, (t)/A_{inf}(R)}\{j\})$ as $H^i(\DD\{j\})$. We then have by \cite[Theorem 1.12(4)]{BMS2} a spectral sequence of the form \[E_2^{s,t}=H^{s-t}(\DD\{-t\})\implies N\tp_{-s-t}(R;\ZZ_p)\] with homological Serre grading on the differential. The $E_2$ page is given below:
\begin{center}
\begin{sseqpage}[x range ={-3}{2}, y range = {-2}{2}, homological Serre grading, classes = {draw = none }, xscale=2, x axis extend end = 4em]
\class["0"](-3,2)
\class["0"](-2,2)
\class["0"](-1,2)
\class["0"](0,2)
\class["0"](1,2)
\class["H^0(\DD\{-2\})"](2,2)
\class["0"](-3,1)
\class["0"](-2,1)
\class["0"](-1,1)
\class["0"](0,1)
\class["H^0(\DD\{-1\})"](1,1)
\class["H^1(\DD\{-1\})"](2,1)
\class["0"](-3,0)
\class["0"](-2,0)
\class["0"](-1,0)
\class["H^1(\DD)"](1,0)
\class["H^0(\DD)"](0,0)
\class["0"](2,0)
\class["0"](-3,-1)
\class["0"](-2,-1)
\class["0"](1,-1)
\class["0"](2,-1)
\class["H^1(\DD\{1\})"](0,-1)
\class["H^0(\DD\{1\})"](-1,-1)
\class["0"](-3,-2)
\class["0"](0,-2)
\class["0"](1,-2)
\class["0"](2,-2)
\class["H^1(\DD\{2\})"](-1,-2)
\class["H^0(\DD\{2\})"](-2,-2)
\d2(0,-1)
\end{sseqpage}
\end{center}

By the previous Lemma we know that $H^i(\DD\{j\})=0$ for all $i\neq 0,1$. Consequently the spectral sequence collapses for degree reasons. We therefore see that $N\tp_{2k}(R;\ZZ_p)\cong H^0(\DD\{k\})$ and so by the previous Lemma we see that $H^0(\DD\{k\})=0$ as desired.
\end{proof}

Our ultimate goal is to get connectivity bounds on the complexes $N\ZZ_p(i)(R)$. In order to do this we will also need similar connectivity bounds on the Nygaard filtered pieces.

\begin{lem}
Let $R$ be a perfectoid ring. Then the complexes $\mathcal{N}^{\geq i}\DD_{R\langle t\rangle, (t)/A_{inf}(R)}$ are concentrated in homological degrees $[-2,0]$ for all $i\in \ZZ$. If $R$ is in addition $p$-torsion free perfectoid ring then The complexes $\mathcal{N}^{\geq i}\DD_{R\langle t\rangle, (t)/A_{inf}(R)}$ are concentrated in homological degrees $[-2,-1]$ for all $i\in \ZZ$.
\end{lem}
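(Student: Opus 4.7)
The plan is to argue via the Nygaard graded pieces and a long-exact-sequence chase. First, recall from \cite{BMS2} (see also \cite{BS}) the identification
\[
\mathrm{gr}^i_{\mathcal{N}}\DD_{R\langle t\rangle/A_{inf}(R)} \simeq \tau^{\leq i}\overline{\DD}_{R\langle t\rangle/A_{inf}(R)}\{i\}
\]
for the Nygaard graded pieces of the (Nygaard-completed) prismatic cohomology. Since $\DD_{R/A_{inf}(R)} = A_{inf}(R)$ is concentrated in Nygaard degree $0$ as the structure sheaf of the perfect prism, the restriction map is compatible with the Nygaard filtration, and taking fibers gives the analogous formula for the $(t)$-relative version.

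Second, use the Hodge--Tate comparison of \cite[Theorem 4.10]{BS} together with the previous lemma: $\overline{\DD}_{R\langle t\rangle/A_{inf}(R)}$ is concentrated in cohomological degrees $[0,1]$ with $H^0 = R\langle t\rangle$ and $H^1 = R\langle t\rangle\{-1\}$, and $\overline{\DD}_{R/A_{inf}(R)} = R$ in degree $0$. Taking the fiber of the $t\mapsto 0$ restriction, which is surjective on $H^0$, shows $\overline{\DD}_{R\langle t\rangle, (t)/A_{inf}(R)}$ is concentrated in cohomological degrees $[0,1]$ with $H^0 = tR\langle t\rangle$ and $H^1 = R\langle t\rangle\{-1\}$. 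Consequently every graded piece $\mathrm{gr}^i_{\mathcal{N}}\DD_{R\langle t\rangle,(t)/A_{inf}(R)}$ lies in cohomological degrees $[0,1]$, i.e., homological degrees $[-1,0]$.

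Third, perform upward induction on $i\geq 0$ using the fiber sequence
\[
\mathcal{N}^{\geq i+1}\DD \to \mathcal{N}^{\geq i}\DD \to \mathrm{gr}^i\DD.
\]
Starting from $\mathcal{N}^{\geq 0}\DD = \DD_{R\langle t\rangle, (t)/A_{inf}(R)}$, which the previous lemma places in cohomological degrees $[0,1]$, a direct application of the long exact sequence shows that if $\mathcal{N}^{\geq i}\DD$ lies in cohomological $[0,2]$ and $\mathrm{gr}^i\DD$ lies in cohomological $[0,1]$, then $\mathcal{N}^{\geq i+1}\DD$ remains in cohomological $[0,2]$. The bound stays uniform in $i$ since the connecting map can shift by at most one, giving the homological bound $[-2,0]$; for $i\leq 0$ the identity $\mathcal{N}^{\geq i}\DD = \DD$ gives the sharper bound $[-1,0]$.

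Finally, for the $p$-torsion free improvement, Corollary~\ref{cor: prismatic cohomology is em} provides that $\DD_{R\langle t\rangle, (t)/A_{inf}(R)}\{k\}$ is concentrated in cohomological degree $\{1\}$. Running the same induction with this sharper input (and observing that $\mathrm{gr}^0\DD = tR\langle t\rangle$ is in cohomological degree $\{0\}$, so the first fiber stays in $\{1\}$) yields that $\mathcal{N}^{\geq i}\DD$ remains in cohomological degrees $[1,2]$, i.e., homological $[-2,-1]$. The main subtlety is the first step, verifying that the Nygaard graded-piece formula survives passage to the $(t)$-relative fiber; once that is in hand, the remainder is a routine chase.
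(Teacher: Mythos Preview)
Your proposal is correct and follows essentially the same route as the paper: both arguments run upward induction on $i$ along the fiber sequence $\mathcal{N}^{\geq i+1}\DD \to \mathcal{N}^{\geq i}\DD \to \mathrm{gr}^i\DD$, using that the graded pieces lie in homological degrees $[-1,0]$ and that the base case $\mathcal{N}^{\geq 0}\DD = \DD$ is handled by the preceding lemma (resp.\ Corollary~\ref{cor: prismatic cohomology is em} in the $p$-torsion free case). The only cosmetic difference is that you justify the bound on $\mathrm{gr}^i\DD$ via the formula $\mathrm{gr}^i_{\mathcal{N}}\DD \simeq \tau^{\leq i}\overline{\DD}\{i\}$ and the Hodge--Tate comparison, whereas the paper simply invokes \cite[Corollary~6.10]{BMS2}; your extra remark that $\mathrm{gr}^0\DD = tR\langle t\rangle$ keeps $\mathcal{N}^{\geq 1}\DD$ in cohomological degree $\{1\}$ is a harmless refinement of the first inductive step.
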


\begin{proof}
For $i\leq 0$ we have that $\mathcal{N}^{\geq i}\DD_{R\langle t\rangle, (t)/A_{inf}(R)}=\DD_{R\langle t\rangle, (t)/A_{inf}(R)}$ and so this is a weaker version of the above Corollary. We will proceed by induction.

Suppose that for some $i\in \NN$ that $\mathcal{N}^{\geq i}\DD_{R\langle t\rangle, (t)/A_{inf}(R)}$ is concentrated in degrees $[-2,0]$ (or $[-2,-1]$ in the $p$-torsion free case.). We then have a cofiber sequence \[\mathcal{N}^{\geq i+1}\DD_{R\langle t\rangle, (t)/A_{inf}(R)}\to \mathcal{N}^{\geq i}\DD_{R\langle t\rangle, (t)/A_{inf}(R)}\to \mathcal{N}^{i}\DD_{R\langle t\rangle, (t)/A_{inf}(R)}\] and $\mathcal{N}^i \DD_{R\langle t\rangle, (t)/A_{inf}(R)}$ is concentrated in degrees $[-1,0]$ by \cite[Corollary 6.10]{BMS2}. The long exact sequence on homotopy groups then gives that 
\[
\begin{tikzcd}
0 \arrow[r] & {\pi_0\left(\mathcal{N}^{\geq i+1}\DD_{R\langle t\rangle, (t)/A_{inf}(R)}\right)} \arrow[r]    & {\pi_0\left(\mathcal{N}^{\geq i}\DD_{R\langle t\rangle, (t)/A_{inf}(R)}\right)} \arrow[r]    & {\pi_0\left(\mathcal{N}^{i}\DD_{R\langle t\rangle, (t)/A_{inf}(R)}\right)} \arrow[ld,out=-30]      \\
            &                                                                                                & \ldots  \arrow[ld, out=-30]                                                                           &                                                                                            \\
            & {\pi_{-2}\left(\mathcal{N}^{\geq i+1}\DD_{R\langle t\rangle, (t)/A_{inf}(R)}\right)} \arrow[r] & {\pi_{-2}\left(\mathcal{N}^{\geq i}\DD_{R\langle t\rangle, (t)/A_{inf}(R)}\right)} \arrow[r] & {\pi_{-2}\left(\mathcal{N}^{i}\DD_{R\langle t\rangle, (t)/A_{inf}(R)}\right)} \arrow[lldd, out=-30] \\
            &                                                                                                &                                                                                              &                                                                                            \\
            & {\pi_{-3}\left(\mathcal{N}^{\geq i+1}\DD_{R\langle t\rangle, (t)/A_{inf}(R)}\right)} \arrow[r] & {\pi_{-3}\left(\mathcal{N}^{\geq i}\DD_{R\langle t\rangle, (t)/A_{inf}(R)}\right)} \arrow[r] & \ldots                                                                                    
\end{tikzcd}
\]
This long exact sequence immediately shows the general statement, so we will focus now on the $p$-torsion free statement.

The top line and the inductive hypothesis show that $\pi_0(\mathcal{N}^{\geq i+1}\DD_{R\langle t\rangle, (t)/A_{inf}(R)})=0$. The inductive hypothesis and the bottom line forces $\pi_{-3}\left(\mathcal{N}^{\geq i+1}\DD_{R\langle t\rangle, (t)/A_{inf}(R)}\right)=0$. The other homotopy groups are also zero by this long exact sequence, so by induction we are done.
\end{proof}

\begin{cor}\label{cor: nygaard peices are em}
Let $R$ be a $p$-torsion free perfectoid ring. Then the complexes $\mathcal{N}^{\geq i}\DD_{R\langle t\rangle, (t)/A_{inf}(R)}\{i\}$ are Eilenberg-MacLane spectra in degree $-1$.
\end{cor}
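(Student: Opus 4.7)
The plan is to leverage the BMS2 motivic filtration on $\tc^-(R\langle t\rangle;\ZZ_p)$ to convert the two-row concentration bound of the previous lemma, together with the even-degree vanishing of $N\tc^-_*(R[t];\ZZ_p)$ from Lemma~\ref{lem: refined tc- and tp computation for affine line}, into the single-degree concentration required. The case $i\leq 0$, where $\mathcal{N}^{\geq i}\DD_{R\langle t\rangle, (t)/A_{inf}(R)} = \DD_{R\langle t\rangle, (t)/A_{inf}(R)}$, is already handled by Corollary~\ref{cor: prismatic cohomology is em}, so the real work is in the case $i\geq 1$.

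First I would observe that since $R$ is perfectoid, the Breuil--Kisin twist $A_{inf}(R)\{1\}$ is a free $A_{inf}(R)$-module of rank one, so twisting by $\{i\}$ does not change the homological concentration. The previous lemma therefore places $\mathcal{N}^{\geq i}\DD_{R\langle t\rangle, (t)/A_{inf}(R)}\{i\}$ in homological degrees $[-2,-1]$, equivalently cohomological degrees $[1,2]$.

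Next, I would apply the BMS motivic filtration of \cite{BMS2} on $\tc^-(R\langle t\rangle;\ZZ_p)$, whose $n$-th graded piece is $\mathcal{N}^{\geq n}\DD_{R\langle t\rangle/A_{inf}(R)}\{n\}[2n]$, and take fibers along the augmentation $t\mapsto 0$. This yields a filtration of $N\tc^-(R\langle t\rangle;\ZZ_p) := \mathrm{fib}\bigl(\tc^-(R\langle t\rangle;\ZZ_p) \to \tc^-(R;\ZZ_p)\bigr)$ with graded pieces $\mathcal{N}^{\geq n}\DD_{R\langle t\rangle, (t)/A_{inf}(R)}\{n\}[2n]$ and associated spectral sequence
\[E_2^{n,k} = H^{2n-k}\left(\mathcal{N}^{\geq n}\DD_{R\langle t\rangle, (t)/A_{inf}(R)}\{n\}\right) \Longrightarrow N\tc^-_k(R[t];\ZZ_p).\]
The $[1,2]$ concentration restricts column $n$ to rows $k\in\{2n-2, 2n-1\}$. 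Since for $r\geq 2$ the source and target of $d_r: E_r^{n,k}\to E_r^{n+r, k-1}$ live in disjoint $k$-ranges, every differential vanishes and $E_2 = E_\infty$; moreover each total degree receives at most one contribution, so there is no extension ambiguity, yielding
\[N\tc^-_{2k}(R[t];\ZZ_p) \cong H^2\left(\mathcal{N}^{\geq k+1}\DD_{R\langle t\rangle, (t)/A_{inf}(R)}\{k+1\}\right)\]
for every $k\in\ZZ$.

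Finally, Lemma~\ref{lem: refined tc- and tp computation for affine line} shows the left-hand side vanishes for all $k$, so $H^2\bigl(\mathcal{N}^{\geq i}\DD_{R\langle t\rangle, (t)/A_{inf}(R)}\{i\}\bigr) = 0$ for every $i\geq 1$. Combined with the $[1,2]$ concentration, this forces the complex into cohomological degree $1$, i.e.\ homological degree $-1$, as required. The main obstacle I anticipate is justifying that the motivic spectral sequence degenerates and has no hidden extensions, but this is handled uniformly by the two-row concentration cutting the $E_2$ page down to a disjoint collection of single-cell total-degree strips.
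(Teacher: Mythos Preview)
Your proposal is correct and follows essentially the same approach as the paper: both run the BMS filtration spectral sequence on $N\tc^-$, observe that the two-row concentration from the previous lemma forces collapse at $E_2$ with a single term in each total degree, identify the even abutment $N\tc^-_{2k}$ with $H^2(\mathcal{N}^{\geq k+1})$, and conclude from Lemma~\ref{lem: refined tc- and tp computation for affine line}. The only difference is your choice of $(n,k)$ indexing versus the paper's homological Serre grading, which is cosmetic.
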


\begin{proof}
For notational convenience we will denote $H^j(\mathcal{N}^{\geq i}\DD_{R\langle t\rangle, (t)/A_{inf}(R)}\{i\})$ as $H^j(\mathcal{N}^{\geq i})$. We then have by \cite[Theorem 1.12(4)]{BMS2} a spectral sequence of the form \[E_2^{s,t}=H^{s-t}(\mathcal{N}^{\geq -t})\implies N\tc^{-}_{-s-t}(R;\ZZ_p)\] with homological Serre grading on the differential. The $E_2$ page is given below:
\begin{center}
\begin{sseqpage}[x range ={-3}{2}, y range = {-2}{2}, homological Serre grading, classes = {draw = none }, xscale=2, x axis extend end = 4em]
\class["0"](-3,2)
\class["0"](-2,2)
\class["0"](-1,2)
\class["0"](0,2)
\class["0"](1,2)
\class["0"](2,2)
\class["0"](-3,1)
\class["0"](-2,1)
\class["0"](-1,1)
\class["0"](0,1)
\class["0"](1,1)
\class["H^1(\mathcal{N}^{\geq -1})"](2,1)
\class["0"](-3,0)
\class["0"](-2,0)
\class["0"](-1,0)
\class["H^1(\DD)"](1,0)
\class["0"](0,0)
\class["H^2(\mathcal{N}^{\geq 0})"](2,0)
\class["0"](-3,-1)
\class["0"](-2,-1)
\class["H^2(\mathcal{N}^{\geq 1})"](1,-1)
\class["0"](2,-1)
\class["H^1(\mathcal{N}^{\geq 1})"](0,-1)
\class["0"](-1,-1)
\class["0"](-3,-2)
\class["H^2(\mathcal{N}^{\geq 2})"](0,-2)
\class["0"](1,-2)
\class["0"](2,-2)
\class["H^1(\mathcal{N}^{\geq 2})"](-1,-2)
\class["0"](-2,-2)
\d2(0,-1)
\end{sseqpage}
\end{center}

By the previous Lemma we know that $H^j(\mathcal{N}^{\geq i})=0$ for all $i\neq 1,2$. Consequently the spectral sequence collapses for degree reasons. We therefore see that $N\tc^{-}_{2k-2}(R;\ZZ_p)\cong H^2(\mathcal{N}^{\geq k})$ and so by Lemma~\ref{lem: refined tc- and tp computation for affine line} we have that $H^2(\mathcal{N}^{\geq i})=0$ for all $i\in \ZZ$ as desired. 
\end{proof}

We now have enough information to consider the BMS filtration on topological cyclic homology.

\begin{lem}\label{lem: nzps are em sorta}
Let $R$ be a $p$-torsion free perfectoid ring. Then the complexes $N\ZZ_p(i)(R)$ are concentrated in homological degrees $[-2,-1]$ and $N\ZZ_p(1)(R)$ is Eilenberg-MacLane concentrated in degree $-1$.
\end{lem}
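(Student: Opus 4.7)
The plan is to use the defining fiber sequence for the syntomic complexes from \cite[Section 7]{BMS2},
\[\ZZ_p(i)(S) \to \mathcal{N}^{\geq i}\DD_{S/A_{inf}(R)}\{i\} \xrightarrow{can - \phi_i} \DD_{S/A_{inf}(R)}\{i\},\]
where $\phi_i$ is the divided Frobenius. Applying this to both $S = R\langle t\rangle$ and $S = R$ and taking relative fibers along the splitting $R \hookrightarrow R\langle t\rangle \twoheadrightarrow R$ produces the fiber sequence
\[N\ZZ_p(i)(R) \to \mathcal{N}^{\geq i}\DD_{R\langle t\rangle, (t)/A_{inf}(R)}\{i\} \xrightarrow{can - \phi_i} \DD_{R\langle t\rangle, (t)/A_{inf}(R)}\{i\}.\]

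Next, I would invoke Corollary~\ref{cor: nygaard peices are em} and Corollary~\ref{cor: prismatic cohomology is em} to conclude that both the source and target of $can - \phi_i$ are Eilenberg-MacLane spectra concentrated in homological degree $-1$. The associated long exact sequence on homotopy groups then collapses to a four-term sequence in which $\pi_{-1}N\ZZ_p(i)(R)$ is the kernel of $(can-\phi_i)_\ast$ on $H^1$, $\pi_{-2}N\ZZ_p(i)(R)$ is its cokernel, and every other homotopy group of $N\ZZ_p(i)(R)$ vanishes. This immediately yields the first assertion of the lemma.

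For the $i=1$ claim, it remains to verify that $(can-\phi_1)_\ast$ is surjective on $H^1$, so that the cokernel $\pi_{-2}N\ZZ_p(1)(R)$ vanishes. The hard part will be pinning down the divided Frobenius $\phi_1$ on $H^1(\mathcal{N}^{\geq 1}\DD_{R\langle t\rangle, (t)/A_{inf}(R)}\{1\})$ concretely. My plan is to exploit the Hodge-Tate comparison together with the graded-piece identification $\mathcal{N}^1\DD\{1\}/\mathcal{N}^{\geq 2}\DD\{1\} \simeq \Omega^1_{R\langle t\rangle/R}$ modulo $\Tilde{d}$, reduce the surjectivity question modulo $\Tilde{d}$ (permitted by the $\Tilde{d}$-torsion-freeness established in the proof of Lemma~\ref{lem: refined tc- and tp computation for affine line}, and then by derived $\Tilde{d}$-adic completeness), and finally verify vanishing of the cokernel by direct computation using the Cartier-type formula for $\phi_1$ acting on the relative Kähler differentials of $R\langle t\rangle/R$.
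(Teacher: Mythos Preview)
Your derivation of the general bound $[-2,-1]$ is exactly the paper's: the defining fiber sequence for $N\ZZ_p(i)$ together with Corollary~\ref{cor: prismatic cohomology is em} and Corollary~\ref{cor: nygaard peices are em} forces the stated range.

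For the $i=1$ statement, however, the paper takes a different and considerably shorter route than the one you sketch. Rather than attempting to compute the divided Frobenius $\phi_1$ on $H^1$ directly, the paper invokes \cite[Theorem 5.2]{antieau2021beilinson} (the Beilinson fiber square) to compare $\ZZ_p(1)(R\langle t\rangle)$ with $\ZZ_p(1)(R/\sqrt{(p)}[t])$: the fiber of this comparison lies in homological degrees $\geq -1$, so $H^2(N\ZZ_p(1)(R))\cong H^2(N\ZZ_p(1)(R/\sqrt{(p)}))$. Over the perfect $\FF_p$-algebra $R/\sqrt{(p)}$ the BMS spectral sequence identifies $H^2(N\ZZ_p(1))$ with $N\tc_0$, which has already been shown to vanish in Lemma~\ref{lem: homotopy invariance for perfect fp algebras}. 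This reduction is the entire content of the $i=1$ argument in the paper.

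Your proposed direct route---reduce the surjectivity of $can-\phi_1$ modulo $\Tilde d$ and then compute via a Cartier-type description of $\phi_1$ on $\Omega^1_{R\langle t\rangle/R}$---is not unreasonable, but you have only outlined it, and the step you flag as ``hard'' is genuinely delicate in mixed characteristic: $\phi_1$ is $\phi$-semilinear over $A_{inf}(R)$, so reducing the target modulo $\Tilde d$ does not correspond simply to reducing the source modulo $\Tilde d$, and one has to keep track of the interaction between $d$, $\Tilde d$, and the Nygaard filtration. The paper in fact remarks (footnote in the proof of Lemma~\ref{lem: homotopy invariance for perfect fp algebras}) that the analogous direct surjectivity argument, which works cleanly when $d=p$, does not obviously go through for general perfectoid $R$ because the inverse one constructs need not be continuous for the $(p,d)$-adic topology. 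The Beilinson fiber square lets the paper avoid this issue entirely by exporting the question to characteristic $p$, where it has already been settled.
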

\begin{proof}
The general homological bounds are due to Corollary~\ref{cor: prismatic cohomology is em} and Corollary~\ref{cor: nygaard peices are em} and the defining fiber sequence of $N\ZZ_p(i)(R)$. We also note that when $R$ is not $p$-torsion free we still have by the defining fiber sequence that $N\ZZ_p(i)(R)$ is contained in homological degrees $[-2,0]$. To see that connectivity bound on $N\ZZ_p(1)(R)$ we will in fact show that $\ZZ_p(1)(R\langle t\rangle )$ is contained in homological degrees $[-1,0]$. 

From \cite[Theorem 5.2]{antieau2021beilinson} we have that the fiber of the map $\ZZ_p(1)(R\langle t\rangle)\to \ZZ_p(1)(R/\sqrt{(p)}[t])$ have homotopy concentrated in degrees $\geq -1$. In particular $\pi_{-k}(\ZZ_p(1)(R\langle t\rangle))\to \pi_{-k}(\ZZ_p(1)(R/\sqrt{(p)}[t]))$ is an isomorphism for all $k\geq 2$. We will show that $\pi_{-2}(N\ZZ_p(1)(R/\sqrt{(p)}))=0$. To this end consider the spectral sequence of \cite[Theorem 1.12(5)]{BMS2} which is in our case of the form $E^{s,t}_2=H^{s-t}(N\ZZ_p(-t)(R/\sqrt{(p)}))\implies N\tc_{-s-t}(R/\sqrt{(p)})$. For notational convinience in the spectral sequence we will denote $H^j(N\ZZ_p(i)(R/\sqrt{(p)}))$ as $H^{j}(i)$. The $E_2$ is recorded below:
\begin{center}
\begin{sseqpage}[x range ={-3}{2}, y range = {-2}{2}, homological Serre grading, classes = {draw = none }, xscale=2, x axis extend end = 4em]
\class["0"](-3,2)
\class["0"](-2,2)
\class["0"](-1,2)
\class["0"](0,2)
\class["0"](1,2)
\class["H^0(-2)"](2,2)
\class["0"](-3,1)
\class["0"](-2,1)
\class["0"](-1,1)
\class["0"](0,1)
\class["H^0(-1)"](1,1)
\class["H^1(-1)"](2,1)
\class["0"](-3,0)
\class["0"](-2,0)
\class["0"](-1,0)
\class["H^1(0)"](1,0)
\class["H^0(0)"](0,0)
\class["H^2(0)"](2,0)
\class["0"](-3,-1)
\class["0"](-2,-1)
\class["H^2(1)"](1,-1)
\class["0"](2,-1)
\class["H^1(1)"](0,-1)
\class["H^0(1)"](-1,-1)
\class["0"](-3,-2)
\class["H^2(2)"](0,-2)
\class["0"](1,-2)
\class["0"](2,-2)
\class["H^1(2)"](-1,-2)
\class["H^0(2)"](-2,-2)
\d2(0,-1)
\end{sseqpage}
\end{center}
For degree reasons this spectral sequence collapses and among other things we see that $N\tc_0(R/\sqrt{(p)};\ZZ_p)\cong H^1(N\ZZ_p(1)(R/\sqrt{(p)}))$. The left hand term is then zero by Lemma~\ref{lem: homotopy invariance for perfect fp algebras}. The result follows.
\end{proof}

We are now ready to prove the main result of this Subsection.
\begin{prop}\label{prop: Even vanishing}
Let $R$ be a $p$-torsion free perfectoid ring. Then $N\tc_0(R;\ZZ_p)=0$. If in addition $R$ is a $\ZZ_p^{cycl}$ algebra then $N\tc(R;\ZZ_p)$ has $2$ periodic homotopy groups in degrees $\geq 0$ and so all nonnegative even homotopy groups will also vanish.
\end{prop}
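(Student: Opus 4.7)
The first statement is a spectral-sequence exercise. I use the BMS motivic spectral sequence $E_2^{s,t} = H^{s-t}(N\ZZ_p(-t)(R)) \Longrightarrow N\tc_{-s-t}(R;\ZZ_p)$ of \cite[Theorem 1.12(5)]{BMS2}. The antidiagonal contributing to $N\tc_0$ consists of the entries $H^{2j}(N\ZZ_p(j)(R))$ for $j \in \ZZ$ (writing $j = -t$). By Lemma~\ref{lem: nzps are em sorta}, $N\ZZ_p(j)(R)$ is concentrated in cohomological degrees $[1,2]$ for all $j$ and $N\ZZ_p(1)(R)$ is Eilenberg-MacLane in degree $1$ alone, so the only candidate entry is at $j = 1$ and equals $H^2(N\ZZ_p(1)(R)) = 0$. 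A check of bidegrees shows that no nontrivial differential can reach this antidiagonal from the other sparse surviving entries, so $N\tc_0(R;\ZZ_p)=0$.

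For the $\ZZ_p^{cycl}$-algebra statement, I would invoke the $ku^\wedge_p$-module structure on $\tc(R;\ZZ_p)$ from \cite[Example 5.5]{k1_local_tr}, which descends to $N\tc(R;\ZZ_p)$ by naturality of the splitting $R \to R\langle t\rangle \to R$. The crucial claim is that for a $\ZZ_p^{cycl}$-algebra, multiplication by the Bott class $\beta$ induces equivalences of nil syntomic complexes $N\ZZ_p(i)(R) \xrightarrow{\cdot \beta} N\ZZ_p(i+1)(R)$ for all $i \geq 1$, thereby extending the degree-$1$ Eilenberg-MacLane concentration of $N\ZZ_p(1)(R)$ to every $N\ZZ_p(i)(R)$ with $i \geq 1$. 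Re-running the spectral sequence argument on the antidiagonals for $N\tc_{2k}$ and $N\tc_{2k+1}$ with $k \geq 0$ then shows that the unique potentially nonzero $E_2$-entries are $H^2(N\ZZ_p(k+1)(R))=0$ and $H^1(N\ZZ_p(k+1)(R)) \cong H^1(N\ZZ_p(1)(R))$ respectively, giving 2-periodicity for $n \geq 0$; combined with the first paragraph this forces $N\tc_{2k}(R;\ZZ_p) = 0$ for all $k \geq 0$.

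The main obstacle is the Bott-equivalence step, which is where the $\ZZ_p^{cycl}$-hypothesis is genuinely used: for a general perfectoid ring the Tate twists are not interchangeable and $H^2(N\ZZ_p(i))$ can be nonzero for $i \geq 2$. The $\ZZ_p^{cycl}$-structure provides the compatible system of $p$-power roots of unity that underlies the Bott periodicity on syntomic cohomology, and unpacking the $ku^\wedge_p$-module structure of \cite[Example 5.5]{k1_local_tr} concretely enough to verify that $\beta$ acts invertibly at the level of the nil syntomic complexes $N\ZZ_p(i)(R)$ (rather than merely after a further $K(1)$-localization) is the delicate part of the argument.
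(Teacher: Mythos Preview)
Your first paragraph is correct and matches the paper's argument exactly: the BMS spectral sequence together with Lemma~\ref{lem: nzps are em sorta} leaves the $s=-t$ antidiagonal empty.

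Your second paragraph, however, has a genuine gap. You correctly identify Bott periodicity as the mechanism, but you do not prove the ``crucial claim'' that $\beta$ induces equivalences $N\ZZ_p(i)(R)\xrightarrow{\sim} N\ZZ_p(i+1)(R)$ for $i\geq 1$; you only say this step is ``delicate''. Two concrete obstructions: first, the $ku^\wedge_p$-module structure of \cite[Example~5.5]{k1_local_tr} is on $\tc$, and it is not clear without further argument that it is compatible with the BMS filtration so as to produce maps of syntomic complexes at all. Second, and more seriously, the paper's own remark immediately following this proposition points out that the periodicity one extracts from \cite{k1_local_tr} only kicks in from degree~$2$ onward, whereas the proposition requires it to start at degree~$0$. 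So simply invoking that module structure is not enough, and the gap you flag is real and unaddressed.

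The paper takes a rather different route for the periodicity statement: it bypasses the syntomic complexes entirely. It constructs $\beta\in\tc_2(R;\ZZ_p)$ explicitly as the lift of $\phi^{-1}(\mu)u$ (using the element $\mu=[\epsilon]-1$ available over $\ZZ_p^{cycl}$), then shows that on $\tc^{-}$ and $\tp$ inverting $\beta$ agrees, after $p$-completion, with inverting $d$ (via the mod-$p$ relations $d\equiv\phi^{-1}(\mu)^{p-1}$, $\mu\equiv\phi^{-1}(\mu)^p$). The fiber of the comparison $N\tc\to N\tc[\beta^{-1}]^\wedge_p$ is then identified, via Lemma~\ref{lem: tc- and tp for affine line}, with the \emph{un-$d$-completed} direct sums $\bigl(\bigvee_j\Sigma\thh(R;\ZZ_p)^{hC_j}\bigr)^\wedge_p$ and $\bigl(\bigvee_j\Sigma\thh(R;\ZZ_p)^{tC_j}\bigr)^\wedge_p$, on which one can analyze $\phi_p-can$ directly as in the characteristic-$p$ case and show it is an equivalence in degrees~$\geq 0$ and injective in degree~$-1$. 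This explicit decomposition is what allows the paper to get periodicity starting at degree~$0$; your filtration-level approach would need a separate argument to close that gap.
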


\begin{proof}
To see the statement for $N\tc_0(R;\ZZ_p)$ consider the BMS spectral sequence from \cite[Theorem 1.12(5)]{BMS2} which in our case is given by $E_2^{s,t}=H^{s-t}(N\ZZ_p(-t)(R))\implies N\tc_{-s-t}(R;\ZZ_p)$ with $E_2$ page given below:
\begin{center}
\begin{sseqpage}[x range ={-3}{2}, y range = {-2}{2}, homological Serre grading, classes = {draw = none }, xscale=2, x axis extend end = 9em]
\class["0"](-3,2)
\class["0"](-2,2)
\class["0"](-1,2)
\class["0"](0,2)
\class["0"](1,2)
\class["0"](2,2)
\class["0"](-3,1)
\class["0"](-2,1)
\class["0"](-1,1)
\class["0"](0,1)
\class["0"](1,1)
\class["H^1(-1)"](2,1)
\class["0"](-3,0)
\class["0"](-2,0)
\class["0"](-1,0)
\class["H^1(0)"](1,0)
\class["0"](0,0)
\class["H^2(0)"](2,0)
\class["0"](-3,-1)
\class["0"](-2,-1)
\class["0"](1,-1)
\class["0"](2,-1)
\class["H^1(1)"](0,-1)
\class["0"](-1,-1)
\class["0"](-3,-2)
\class["H^2(2)"](0,-2)
\class["0"](1,-2)
\class["0"](2,-2)
\class["H^1(2)"](-1,-2)
\class["0"](-2,-2)
\draw[name = zeroline, color=red, rotate=-26.5] (-0.2,1.8) ellipse (2.5 and 0.5);
\node[right, color=red] at (2.2,-2) {N\tc_0(R;\ZZ_p)};
\end{sseqpage}
\end{center}

Here $H^j(i)$ denotes $H^j(N\ZZ_p(i)(R))$, and the degree bounds produced in this section are included in the diagram above. From this we see that there is nothing on the $s=-t$ line(circled above in red) and so $N\tc_0(R;\ZZ_p)\cong 0$.

We now turn to the case of $R$ a $p$-torsion free perfectoid $\ZZ_p^{cycl}$ algebra. In this case we can be slightly more specific about some of the objects appearing in the constructions above. First recall the notation $\epsilon := (1,\zeta_p, \zeta_{p^2},\ldots)\in R^\flat$ where $\zeta_{p^k}$ is a primitive $p^k$ root of unity. Then by \cite[Example 3.16]{BMS1} we have that $d:= 1+[\epsilon^{{1}/{p}}]+[\epsilon^{{1}/{p}}]^2+\ldots +[\epsilon^{{1}/{p}}]^{p-1}$ is an orientation of $R$ where $[-]:R^\flat \to A_{inf}(R)$ is the Tiechm\"uller lift. If we let $\mu:= [\epsilon]-1$ we then also have that $\Tilde{d}=\phi(\mu)/\mu$ and that $d=\mu/\phi^{-1}(\mu)$ by \cite[Proposition 3.17]{BMS1}. We then have that the element $\phi^{-1}(\mu)u\in \tc^{-}_{2}(R;\ZZ_p)$ is such that $can(\phi^{-1}(\mu)u)=d\phi_{-1}(\mu)\sigma = \mu\sigma=\phi_p(\phi^{-1}(\mu)u)$ in $\tp_{2}(R;\ZZ_p)$ by \cite[Proposition 6.2 and 6.3]{BMS2}. In particular by \cite{Nikolaus_Scholze} there exists a unique element $\beta\in \tc_2(R;\ZZ_p)$ lifting $\phi^{-1}(\mu)u$. 

We will show that multiplication by $\beta$ is an isomorphism on $N\tc(R;\ZZ_p)$ in degrees $\geq 0$. Base changing the fiber sequence of \cite[Proposition4.3]{Nikolaus_Scholze} gives a cofiber sequence \[N\tc(R;\ZZ_p)[\beta^{-1}]^\wedge_p\to N\tc^{-}(R;\ZZ_p)[\beta^{-1}]^\wedge_p\to N\tp(R;\ZZ_p)[\beta^{-1}]^\wedge_p\] and we have comparison maps from the cofiber sequence giving $N\tc(R;\ZZ_p)$. On homotopy groups the action of $\beta$ on $N\tc^{-}(R;\ZZ_p)$ is given by multiplication by $\phi^{-1}(\mu)u$ and on $N\tp(R;\ZZ_p)$ it is given by multiplication by $\mu\sigma$. 

We will first show that in fact $\tc^{-}(R;\ZZ_p)[\beta^{-1}]^\wedge_p\simeq \tc^{-}(R;\ZZ_p)[d^{-1}]^\wedge_p$ and that $\tp(R;\ZZ_p)[\beta^{-1}]^\wedge_p\simeq \tp(R;\ZZ_p)[d^{-1}]^\wedge_p$. To see this first note that we have canonical comparison maps in both cases from the right to the left given by the fact that $d=\phi^{-1}(\mu)^{p-1}\mod p$ and $\mu=\phi^{-1}(\mu)^p\mod p$ and so inverting $\beta$ in the $p$-complete sense will also invert $d$, $\phi^{-1}(\mu)$, and $\mu$ as well. Thus it is enough to check that this map is an isomorphism on homotopy groups, but then on homotopy groups we have that \[\pi_*\left(\tc^{-}(R;\ZZ_p)[\beta^{-1}]^\wedge_p\right)=\pi_*\left(\colim \ldots\to\Sigma^{2}\tc^{-}(R;\ZZ_p)\xrightarrow{\times \beta}\tc^{-}(R;\ZZ_p)\to\ldots\right)^\wedge_p\] and the map $\beta$ on homotopy groups is injective. Thus as a filtered colimit of $p$-torsion free groups it the colimit of $\tc_*^{-}(R;\ZZ_p)$ under multiplication by $\beta$ will still be $p$-torsion free. Consequently there will be no higher derived $p$-completions and $\pi_*\left(\tc^{-}(R;\ZZ_p)[\beta^{-1}]^\wedge_p\right)\cong \left(A_{inf}(R)[u,v]/(uv-d)[\beta^{-1}]\right)^\wedge_p$. To see that this is the same as inverting $d$ in the $p$-complete sense notice that $u\mid d$ and $d=\phi^{-1}(\mu)^{p-1}\mod p$ imply that $\beta$ is invertable after inverting $d$ in the $p$-complete sense, and that as already discussed inverting $\beta$ will also invert $d$. The argument for the topological periodic homology is similar. 

To see that $\beta$ acts isomorphically on $N\tc_k(R;\ZZ_p)$ for $k\geq 0$ it is now enough to show that $N\tc(R;\ZZ_p)\to N\tc(R;\ZZ_p)[\beta^{-1}]^\wedge_p$ is $(-2)$-truncated, which by the above is then equivalent to show that the total fiber of the square 
\[
\begin{tikzcd}
N\tc^{-}(R;\ZZ_p) \arrow[r] \arrow[d] & {N\tc^{-}(R;\ZZ_p)[d^{-1}]^\wedge_p} \arrow[d] \\
N\tp(R;\ZZ_p) \arrow[r]               & {N\tp(R;\ZZ_p)[d^{-1}]^\wedge_p}              
\end{tikzcd}
\]
has homotopy groups concentrated in degrees $\leq -2$. To see this consider the completion maps \[\left(\bigvee_{j\in \ZZ_{\geq 1}}\Sigma\thh(R;\ZZ_p)^{hC_j}\right)^\wedge_p\to N\tc^{-}(R;\ZZ_p)\] and \[\left(\bigvee_{j\in \ZZ_{\geq 1}}\Sigma\thh(R;\ZZ_p)^{tC_j}\right)^\wedge_p\to N\tp(R;\ZZ_p)\] which by Lemma~\ref{lem: tc- and tp for affine line} takes the form of $d$-adic completion of the left hand side. Note that modulo $p$ the left hand side of both of these maps are $d$-torsion, and so by an arithmetic fracture square argument the cofiber of these maps are $N\tc^{-}(R;\ZZ_p)[d^{-1}]^\wedge_p$ and $N\tp(R;\ZZ_p)[d^{-1}]^\wedge_p$, respectively. The maps are also the usual comparison maps, and so the horizontal fibers of the square in question are given by the $p$-completed but not $d$-completed wedge sum. 

It remains to show that that map \[\left(\bigvee_{j\in \ZZ_{\geq 1}}\Sigma\thh(R;\ZZ_p)^{hC_j}\right)^\wedge_p\to \left(\bigvee_{j\in \ZZ_{\geq 1}}\Sigma\thh(R;\ZZ_p)^{tC_j}\right)^\wedge_p\] is $(-2)$-truncated. To see this in degrees $\geq 0$ the argument is the same as in Lemma~\ref{lem: homotopy invariance for perfect fp algebras}. We now need only show that the map on $\pi_{0}$ is surjective and the map on $\pi_{-1}$ is injective. For the surjectivity note that $\pi_*\left(\bigvee_{j\in \ZZ_{\geq 1}}\Sigma\thh(R;\ZZ_p)^{tC_j}\right)$ is $p$-torsion free and so will have no higher derived $p$-completions. In particular $\pi_0\left(\bigvee_{j\in \ZZ_{\geq 1}}\Sigma\thh(R;\ZZ_p)^{tC_j}\right)^\wedge_p=0$ and so we do not need to worry about surjectivity. 

To see that in degree $-1$ this map is injective, unwinding definitions this is equivalent to the map \[\left(\bigoplus_{j\in \ZZ_{+}}A_{inf}(R)/\Tilde{d}_{v_p(j)}\right)^\wedge_p\xrightarrow{1-F}\left(\bigoplus_{j\in \ZZ_{+}}A_{inf}(R)/\Tilde{d}_{v_p(j)}\right)^\wedge_p\] is injetive where $F$ is some map $A_{inf}(R)/\Tilde{d}_{v_p(j)}\to A_{inf}(R)/\Tilde{d}_{v_p(pj)}$ which up to an automorphism of $A_{inf}(R)/\Tilde{d}_{v_p(pj)}$ is induced by the Frobenius\footnote{It turns out that this automorphism is the identity, but this takes some work to show and is ultimately unnecessary to our argument.}. Since each $A_{inf}(R)/\Tilde{d}_{v_p(j)}$ is $p$-torsion free and already $p$-complete we then have that \[\left(\bigoplus_{j\in \ZZ_{+}}A_{inf}(R)/\Tilde{d}_{v_p(j)}\right)^\wedge_p\hookrightarrow \prod_{j\in \ZZ_{+}}A_{inf}(R)/\Tilde{d}_{v_p(j)}\] is injective. Suppose that $x\in \ker(1-F:\prod_{j\in \ZZ_{+}}A_{inf}(R)/\Tilde{d}_{v_p(j)}\to \prod_{j\in \ZZ_{+}}A_{inf}(R)/\Tilde{d}_{v_p(j)})$. Either $x=0$ or there is a smallest $j$ such that $x_j\neq 0$, in which case $(1-F)(x)_j=x_j\neq 0$ since $F$ multiplies the product index by $p$ and $x_{j/p}=0$ by assumption. This second case contradicts $x\in \ker(1-F)$ and so $x=0$. Thus $1-F$ on the product is injective and so by the $5$-Lemma the map in question is also injective.
\end{proof}

\begin{rem}
The idea for this proof came from the observation of Mathew in \cite[Propositio 5.10]{k1_local_tr} that for a smooth algebra over a perfectoid base topological restriction homology is asymptotically $K(1)$ local and if in addition the perfectoid ring you start with has a consistent system of $p^{\textrm{th}}$ power roots then the $K$-theory is a $ku^\wedge_p$-module by \cite[Example 5.5]{k1_local_tr}. We believe that $\beta$ is coming from the action of the Bott class via this module structure, but have not checked this. While it would be simpler to cite these results, we need the periodicity to start by degree $0$. The cited result only kicks in after degree $2$.  
\end{rem}

From the periodicity we are also able to give a description of the odd homotopy groups.

\begin{cor}\label{cor: odd k group calculation}
Let $R$ be a $p$-torsion free perfectoid $\ZZ_p^{cycl}$-algebra. Then for $r\geq 0$ we have a short exact sequence \[0\to \operatorname{Ext}(\ZZ/p^\infty, 1+\sqrt{pR_0}\langle t\rangle)\to K_{2r+1}(R\langle t\rangle,(t);\ZZ_p)\to T_p(\pic(R_0\langle t\rangle))\to 0\] where $1+\sqrt{pR}\langle t\rangle\subset R\langle t\rangle$ and $T_p(G)=\hom(\ZZ/p^\infty, G)$ is the Tate module. This sequence is splittable and the first term can be identified with $\left(1+\sqrt{pR_0}\langle t\rangle\right)^\wedge_p$.
\end{cor}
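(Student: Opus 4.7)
My plan starts by reducing to computing $K_1$. The pullback square~\eqref{eqn: affine pullback round 1} combined with Lemma~\ref{lem: homotopy invariance for perfect fp algebras} shows that $K_n(R\langle t\rangle, (t); \ZZ_p) \cong \tc_n(R\langle t\rangle, (t); \ZZ_p)$ for all $n \geq 0$, since the horizontal fibers of that square (controlled by $k[t]$) vanish in nonnegative degrees. Proposition~\ref{prop: Even vanishing} then gives $2$-periodicity of this group in nonnegative degrees, so $K_{2r+1}(R\langle t\rangle, (t); \ZZ_p) \cong K_1(R\langle t\rangle, (t); \ZZ_p)$ for every $r \geq 0$, and it suffices to produce the desired short exact sequence for $K_1$.

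For this, I would invoke the Milnor-type short exact sequence coming from derived $p$-completion of the $K$-theory spectrum applied to $K(R\langle t\rangle, (t))$:
\[0 \to \mathrm{Ext}(\ZZ/p^\infty, K_1(R\langle t\rangle, (t))) \to K_1(R\langle t\rangle, (t); \ZZ_p) \to T_p(K_0(R\langle t\rangle, (t))) \to 0.\]
The identification of the right-hand term follows from the decomposition $K_0(B) \cong \ZZ \oplus \widetilde{K}_0(B)$ together with the identification of reduced $K_0$ with $\pic$ (up to pieces invisible to $T_p$), and Henselian rigidity along $(p)$ from \cite{Clausen_Mathew_Morrow}, which identifies $\pic(R\langle t\rangle)$ with $\pic(R_0\langle t\rangle)$ modulo $p$-divisible torsion. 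Hence $T_p(K_0(R\langle t\rangle, (t))) \cong T_p(\pic(R_0\langle t\rangle))$.

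For the left-hand term, one identifies the relative $K_1$ with the multiplicative group $1 + tR\langle t\rangle$ (absorbing any $SK_1$-contribution into the $p$-divisible part), and shows that the derived $p$-completion of this group is $(1 + \sqrt{pR_0}\langle t\rangle)^\wedge_p$. The essential observation is that units in $1 + tR\langle t\rangle$ not lying in $1+\sqrt{pR_0}\langle t\rangle$ admit compatible $p^n$-th roots by the perfectoid structure on $R$, so they are $p$-power-divisible in the multiplicative group and are killed under derived $p$-completion. Alternatively, this identification can be extracted from the Picard-type interpretation of $H^1$ of the syntomic complex $\ZZ_p(1)(R\langle t\rangle)$ coming out of \cite{BMS2}, which we have already pinned down in Lemma~\ref{lem: nzps are em sorta}.

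The step I expect to be hardest is this last identification: carefully controlling the $p$-divisible subgroup of $1 + tR\langle t\rangle$ so as to conclude that the residual is captured exactly by $1+\sqrt{pR_0}\langle t\rangle$. Once the two outer terms have been identified, splittability follows either by exhibiting an explicit section via the $\beta$-multiplication produced in the proof of Proposition~\ref{prop: Even vanishing}, which gives a natural map from the quotient $T_p(\pic(R_0\langle t\rangle))$ back into $K_{2r+1}(R\langle t\rangle, (t); \ZZ_p)$, or by a vanishing-of-$\mathrm{Ext}$ argument on the category of profinite $\ZZ_p$-modules into which both outer terms naturally embed.
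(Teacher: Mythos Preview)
Your reduction to $r=0$ via the pullback square and $2$-periodicity is correct and matches the paper. The divergence is in how you then compute $K_1(R\langle t\rangle,(t);\ZZ_p)$.

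Your primary route through the Milnor sequence for the integral relative $K$-theory has a genuine error: the set $1+tR\langle t\rangle$ is \emph{not} a multiplicative group. Recall that here $R\langle t\rangle := R[t]^\wedge_p$ consists of power series whose coefficients tend to zero $p$-adically; the element $1+t$ has no inverse in this ring, since $\sum(-t)^n$ does not converge $p$-adically. In fact the paper shows (via a Henselian pair argument on $(R\langle t\rangle,\sqrt{pR\langle t\rangle})$) that $R\langle t\rangle^\times/R^\times$ is already exactly $1+\sqrt{pR}\langle t\rangle$; there is no larger multiplicative group to cut down by $p$-divisibility. So the sentence ``units in $1+tR\langle t\rangle$ not lying in $1+\sqrt{pR_0}\langle t\rangle$ admit compatible $p^n$-th roots'' is addressing a nonexistent situation. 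Your $K_0$ step is also underjustified: identifying $T_p(K_0(R\langle t\rangle,(t)))$ with $T_p(\pic(R_0\langle t\rangle))$ would require controlling the non-$\pic$ part of integral $K_0$ of a non-noetherian, non-regular ring, and the Henselian rigidity of \cite{Clausen_Mathew_Morrow} concerns $K^{inv}$, not $K_0$ on the nose.

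The paper avoids integral $K$-theory entirely. From the BMS spectral sequence and Lemma~\ref{lem: nzps are em sorta} one gets $K_1(R\langle t\rangle,(t);\ZZ_p)\cong H^1(N\ZZ_p(1)(R))$ directly. Then \cite[Proposition 7.17]{BMS2} (in the form of \cite[Lemma 7.5.7]{bhatt_lurie}) identifies $\ZZ_p(1)$ with the derived $p$-completion of $\mathbb{G}_m[-1]$, so Bousfield's splittable short exact sequence gives, for any $B$,
\[0\to \operatorname{Ext}(\ZZ/p^\infty, B^\times)\to H^1(\ZZ_p(1)(B))\to T_p(\pic(B))\to 0.\]
Applying this to $B=R$ and $B=R\langle t\rangle$, using that $\pic(R)$ is uniquely $p$-divisible for perfectoid $R$ (so $T_p(\pic(R))=0$), and then computing $R\langle t\rangle^\times/R^\times$ via the Henselian pair argument, yields the stated sequence with the correct outer terms immediately. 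Splittability is inherited from Bousfield's sequence, not produced by $\beta$. Your ``alternatively'' clause points at exactly this argument; that is the one you should develop.
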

\begin{proof}
From Subsection\ref{ssec: homotopy invariance charp} and pullback square~\ref{eqn: affine pullback round 1} we have that \[K(R\langle t\rangle;\ZZ_p)\simeq \tau_{\geq 0}\tc(R\langle t\rangle;\ZZ_p)\] and so we may use the results of this Subsection. In particular the $K$-groups will be $2$-periodic and we need only show this for $r=0$.

From the above we also have that $K_1(R\langle t\rangle, (t);\ZZ_p)\cong H^1(N\ZZ_p(1)(R))$ and so it is enough to identify this cohomology group. For this we will use \cite[Proposition 7.17]{BMS2} in the form that Bhatt and Lurie write it in \cite[Lemma 7.5.7]{bhatt_lurie}. From this we have a description of $H^1(\ZZ_p(1))$ as the global sections of the derived $p$-completion of $\mathbb{G}_m$. In particular \cite[Proposition 2.5]{Bousfield_localization} gives splittable short exact sequences \[0\to \operatorname{Ext}(\ZZ/p^\infty, R^\times)\to H^1(\ZZ_p(1)(R))\to T_p(H^1(R;\mathbb{G}_m))\to 0\] and \[0\to \operatorname{Ext(\ZZ/p^\infty, R\langle t\rangle^\times)}\to H^1(\ZZ_p(R\langle t\rangle))\to T_p(H^1(R\langle t\rangle;\mathbb{G}_m))\to 0\] which are functorial in their input. 

We have an isomorphism $H^1(R;\mathbb{G}_m)\cong \pic(R)$, and by \cite[Corollary 9.7]{BS} this is a uniquely $p$-divisible group. Thus $T_p(\pic(R))=0$ and by combining the two short exact sequences above we get a short exact sequence \[0\to \operatorname{Ext}(\ZZ/p^\infty, R\langle t\rangle^\times /R^\times) \to H^1(N\ZZ_p(1)(R)))\to T_p(\pic(R\langle t\rangle) \to 0\] which is still splittable since the splitting of $H^1(\ZZ-p(1)(R))$ is natural since the third term vanishes. All that is left to do is identify the first term. In order to do this it is enough to compute $R\langle t\rangle^\times$ and to show that $R\langle t\rangle^\times / R^\times$ has bounded $p^\infty$-torsion. 

Note that $(R\langle t\rangle, \sqrt{pR\langle t\rangle})$ is a Henselian pair, so an element $x(t)\in R\langle t\rangle$ is a unit if and only if its image is a unit in $R\langle t\rangle /\sqrt{pR\langle t\rangle}=R/\sqrt{pR}[t]$. The ring $R/\sqrt{pR}$ is reduced (by definition) so the only units in the polynomial ring are the constant terms. We also have that $(R, \sqrt{pR})$ is Henselian, so the constant term of $x(t)$ is a unit in $R$ if and only if it is a unit in $R/\sqrt{pR}$. Consequently $x(t)\in R\langle t\rangle$ is a unit if and only if it is of the form $u+\sqrt{pR}t$.

To see that $R\langle t\rangle^\times/R^\times$ has bounded $p^\infty$-torsion, let $x(t)\in 1+\sqrt{pR}\langle t\rangle$ be a $p^k$-torsion element for some $k\geq 1$. Write $x(t)=1+x_1t+x_2t^2+\ldots$ and notice then the the equation $x(t)^{p^k}=1$ translates to \[1=1+p^kx_1t+o(t^2)\] and in particular $p^kx_1=0$. Since $R$ is $p$-torsion free we have that $x_1=0$. Inductively $x_i=0$ for all $i\geq 1$ and so $x(t)=1$ and $R\langle t\rangle^\times/R^\times$ in fact has no $p$-power torsion.
\end{proof}
\subsection{Topological cyclic homology of the affine line over general perfectoid base}\label{ssec: tc affine line general}

From the Subsection~\ref{ssec: homotopy invariance charp} and pullback square~\ref{eqn: affine pullback round 1} we see that for $R$ a perfectoid ring \[K(R\langle t\rangle; \ZZ_p)\simeq \tau_{\geq 0}\tc(R\langle t\rangle; \ZZ_p)\] To show Theorem~\ref{thm: Formal affine invariance} it is then equivalent to work with topological cyclic homology. To show the result, we prove it separately for the positive characteristic case and the $p$-torsion free case, and then use an excision argument to recover the result in general. Subsection~\ref{ssec: homotopy invariance charp} handled the positive characteristic case, and we will see that the results from Subsection~\ref{ssec: homotopy invariance charz} and the previous Subsection will handle the $p$-torsion free case. The excision result is then essentially just \parencite[Construction 7.20]{k1_local_tr}.

\begin{prop}\label{prop: pullback ptf and charp}
Let $R$ be a perfectoid ring. Then there is a pullback square \[\begin{tikzcd}
\tc(R\langle t\rangle;\ZZ_p) \arrow[d] \arrow[r] & \tc(R_0\langle t\rangle;\ZZ_p) \arrow[d] \\
{\tc(k[t];\ZZ_p)} \arrow[r]                      & {\tc(k_0[t];\ZZ_p)}                     
\end{tikzcd}\] where $R_0=R/R[p]$ is the original perfectoid ring mod the ideal of $p$-torsion elements, and $k$ and $k_0$ are perfect $\FF_p$-algebras.
\end{prop}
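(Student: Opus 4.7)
The plan is to realize the claimed square as an instance of the excision square of \cite[Construction 7.20]{k1_local_tr}, extended over the $p$-complete polynomial ring in $t$. The guiding idea is that $R \to R_0$ is the quotient by the $p$-torsion ideal $R[p]$, which is annihilated by $p$, while $k \to k_0$ is the parallel quotient by the image of $R[p]$ after passing to the characteristic-$p$ reduction; the content of the pullback is that these two surjections carry the same amount of information in $\tc(-;\ZZ_p)$.

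First I would verify commutativity of the square: both composites $R\langle t\rangle \to k_0[t]$ factor through the canonical surjection $R \to R/(R[p] + \sqrt{pR}) = k_0$, since $k_0 = R_0/\sqrt{pR_0}$ and the preimage of $\sqrt{pR_0}$ in $R$ is exactly $R[p] + \sqrt{pR}$. Next I would identify the horizontal fibers. The kernel of $R\langle t\rangle \to R_0\langle t\rangle$ is the $p$-completion of $R[p][t]$, which is $p$-torsion since $R[p]$ is annihilated by $p$; likewise the kernel of $k[t] \to k_0[t]$ is the image of $R[p]$ in $k[t]$ and is again annihilated by $p$. The claim of the proposition is that the induced relative $\tc(-;\ZZ_p)$-terms are equivalent.

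To conclude, I would invoke \cite[Construction 7.20]{k1_local_tr}, which supplies precisely such a gluing square for a perfectoid ring and its reduction. The construction relies on the Henselian rigidity of $K^{inv}$ of \cite{Clausen_Mathew_Morrow} together with the fact that $R[p]$ is annihilated by $p$, and it produces a pullback square of the shape at issue for any localizing invariant playing well with $p$-completion, in particular $\tc(-;\ZZ_p)$. The main obstacle is ensuring that the hypotheses of this construction persist after the $p$-complete polynomial extension $R \mapsto R\langle t\rangle$. This reduces to two facts: (i) Henselianness of $(R,\sqrt{pR})$ is inherited by $(R\langle t\rangle, \sqrt{pR\langle t\rangle})$, so the reduction map $R\langle t\rangle \to k[t]$ remains a Henselian surjection with nilradical kernel modulo $p$; and (ii) $R\langle t\rangle$ still has bounded $p^\infty$-torsion, which follows from \cite[Lemma 5.19]{Clausen_Mathew_Morrow} since $R$ does. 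With these in place, Construction 7.20 applies verbatim to yield the desired pullback square.
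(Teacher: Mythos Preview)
Your outline starts at the right place (the decomposition of a perfectoid ring from \cite[Construction~7.20]{k1_local_tr}) but drifts away from what actually makes the argument work, and the proposed verification does not close the gap.

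First, Construction~7.20 is a statement about \emph{rings}: for a perfectoid $R$ it produces a Milnor square
\[
\begin{tikzcd}
R \arrow[r]\arrow[d] & R_0 \arrow[d]\\
k \arrow[r] & k_0
\end{tikzcd}
\]
which is simultaneously a pushout in $\mathbb{E}_\infty$-rings. It does not by itself give a pullback square for localizing invariants, and it does not apply ``verbatim'' to $R\langle t\rangle$, since $R\langle t\rangle$ is not perfectoid. So the step ``Construction~7.20 applies verbatim to $R\langle t\rangle$ once Henselianness and bounded torsion persist'' is not well-formed: neither of those two facts is a hypothesis of the construction.

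What the paper actually does is cleaner and avoids this problem. One takes the Milnor square above for $R$ and applies $-\otimes_R R[t]$. Because adding a free variable is flat, the resulting square of polynomial rings is again a Milnor square (vertical maps remain surjective, and the induced map on kernels is still an isomorphism) and still a pushout of $\mathbb{E}_\infty$-rings. At that point one invokes Land--Tamme excision \parencite[Theorem~2.7]{Land_Tamme}, which is precisely the mechanism that turns a Milnor square of rings into a pullback square on $\tc$. Your proposal never names this step, and your fact~(i) about Henselianness is a red herring here: Henselian rigidity is used elsewhere in the paper to reduce $K$ to $\tc$, but it plays no role in this particular proposition.

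Finally, your fact~(ii) is the correct last step: once you have the pullback for $\tc(R[t];\ZZ_p)$ etc., bounded $p^\infty$-torsion (via \cite[Lemma~5.19]{Clausen_Mathew_Morrow}) lets you replace $R[t]$ and $R_0[t]$ by their $p$-completions $R\langle t\rangle$ and $R_0\langle t\rangle$. So the skeleton of your argument can be salvaged, but the middle needs to be rewritten as: Milnor square for $R$ from Construction~7.20, base change by $R[t]$, Land--Tamme excision, then pass to $p$-completions.
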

\begin{proof}
By \parencite[Construction 7.20 ]{k1_local_tr} we have a Milnor square 
\[\begin{tikzcd}
R \arrow[d] \arrow[r] & R_0 \arrow[d] \\
{k} \arrow[r]                      & {k_0}          
\end{tikzcd}\] with $R_0$, $k$, and $k_0$ as described in the statement of the proposition. In addition this diagram is also pushout in $\mathbb{E}_\infty$ rings. After applying $-\otimes_{R}R[ t]$ then gives a pushout diagram \[\begin{tikzcd}
R[t] \arrow[d] \arrow[r] & R_0[t] \arrow[d] \\
{k[t]} \arrow[r]                      & {k_0[t]}          
\end{tikzcd}\] of $\mathbb{E}_\infty$ rings. In addition the vertical maps are still surjections and the induced map on the kernels of the vertical maps will still be an isomorphism since adding an independent variable has no higher $\mathrm{Tor}$ groups. Then \parencite[Theorem 2.7]{Land_Tamme} gives that \[\begin{tikzcd}
\tc(R[ t];\ZZ_p) \arrow[d] \arrow[r] & \tc(R_0[ t];\ZZ_p) \arrow[d] \\
{\tc(k[t];\ZZ_p)} \arrow[r]                      & {\tc(k_0[t];\ZZ_p)}                     
\end{tikzcd}\] is a pullback square. Since $R[t]$ and $R_0[t]$ have bounded $p^\infty$-torsion, the result follows.
\end{proof}

Since we handled the case when $p=0$ in the previous section, assume without loss of generality that $p\neq 0$ in $R$. Then the map $\tc(k[t];\ZZ_p)\to \tc(k_0[t];\ZZ_p)$ will restrict to the identity on $\tc(\FF_p[t];\ZZ_p)$, and so it is an isomorphism on $\pi_0$. Consequently the map $\tc(R\langle t\rangle;\ZZ_p)\to \tc(R_0\langle t\rangle;\ZZ_p)$ is an isomorphism in positive degrees and an injection on $\pi_0$. 

We are now ready to prove the truncation result.

\begin{proof}[Proof of Theorem~\ref{thm: Formal affine invariance}]
First let us consider the case when $(n,p)=1$. For this we might as well assume that $n=l^i$ for some prime $l\neq p$ and $i\geq 1$. In this case Gabber rigidity \cite{Gabber_Rigidity} appies and \[K(R\langle t\rangle; \ZZ/l^i)\simeq K(k[t];\ZZ/l^i)\] where $k:= (R/p)/\sqrt{0}$ is a perfect $\FF_p$-algebra. Once $p$ is nilpotent in our ring we have an equivalence $K(k;\ZZ/l^i)\simeq K(k[t];\ZZ/l^i)$ by \parencite[Consequence 1.1]{Weibel_homotopy_invariance}.

Now suppose that $R$ is a $p$-torsion free perfectoid $\ZZ_p^{cycl}$-algebra. By Proposition~\ref{prop: Even vanishing} we have that $N\tc_{2k}(R;\ZZ_p)=0$ for all $k$, and Corollary~\ref{lem: nzps are em sorta} and the BMS spectral sequence and $2$-periodicity we have that $N\tc_{2k+1}(R;\ZZ_p)\cong H^1(R\langle t\rangle; N\ZZ_p(1))$. By Corollary~\ref{cor: odd k group calculation} we then have that $N\tc_{2k+1}(R;\ZZ_p)\cong T_p\pic(R\langle t\rangle)\oplus (1+\sqrt{pR}\langle t\rangle)^\wedge_p$ as desired.

Finally let $R$ be a general perfectoid $\ZZ_p^{cycl}$-algebra. If $p=0$ in $R$ then $R$ is a perfect $\FF_p$ algebra and $R\langle t\rangle=R[t]$. For $p\neq 0$ in $R$ we then have from Proposition~\ref{prop: pullback ptf and charp} and Lemma~\ref{lem: homotopy invariance for perfect fp algebras} that for $R_0=R/R[p]$ that $\tc(R\langle t\rangle; \ZZ_p)\to \tc(R_0\langle t\rangle;\ZZ_p)$ is an isomorphism in positive degrees and an injection on $\pi_0$. The result then follows from the previous paragraph along with the fact that $N\tc_0(R_0\langle t\rangle;\ZZ_p)=0$ so since it injects into the zero group $N\tc_0(R\langle t\rangle)=0$.
\end{proof}

\section{Final computations}~\label{sec: final computation}
In this section we will put all the work we have done in the previous sections together and prove Theorem~\ref{thm: main computation} and Theorem~\ref{thm: main computation p-completed}.
\begin{proof}[Proof of Theorem~\ref{thm: main computation}]
From Section~\ref{sec: main pullback squares} we have a pullback square of the form \[\begin{tikzcd}
K(A) \arrow[r] \arrow[d] & \tc(A) \arrow[d] \\
{K(R[t])} \arrow[r]      & {\tc(R[t])}     
\end{tikzcd}\] and so upon profinite completion a pullback square \[\begin{tikzcd}
K(A;\widehat{\ZZ}) \arrow[r] \arrow[d] & \tc(A;\widehat{\ZZ}) \arrow[d] \\
{K(R[t];\widehat{\ZZ})} \arrow[r]      & {\tc(R[t];\widehat{\ZZ})}     
\end{tikzcd}\]
From Lemma~\ref{lem: tc calculation part 1} and Lemma~\ref{lem: tc computation part 2} we know the homotopy groups of the vertical fiber after $p$-completion, they are as described in the statement of Theorem~\ref{thm: main computation}. We also have by assumption that $K(R;\widehat{\ZZ})\to K(R[t];\widehat{\ZZ})$ is an equivalence, and so it remains to show that $\tc(A,R[t];\ZZ_p)\to \tc(A,R[t];\widehat{\ZZ})$ is an equivalence. 

For this it is enough to show that for $l\neq p$ a prime, $\tc(A,R[t];\ZZ_l)=0$, and this in turn is implied by the vanishing of $\tc(A;\ZZ_l)$ and $\tc(R[t];\ZZ_l)$. Since $l$-adic completion commutes with limits it is enough to show that $\thh(A;\ZZ_l)=\thh(R[t];\ZZ_l)=0$. Recall that for a prime $l$, the $l$-adic completion of a spectrum $X$ is given by the Bousfield localization $L_{\mathbb{S}/l}X$ of $X$ at the Moore spectrum $\mathbb{S}/l$. Both $A$ and $R[t]$ have the property that $L_{\mathbb{S}/l}X=0$ since $l$ is a unit in both rings, and so $L_{\mathbb{S}/l}(X^{\wedge n+1})=L_{\mathbb{S}/l}(L_{\mathbb{S}/l}(X)\wedge X^{\wedge n})=0$ for both rings. Since $L_{\mathbb{S}/l}$ is a left adjoint it commutes with colimits and \[\thh(X;\ZZ_l)\simeq L_{\mathbb{S}/l}|L_{\mathbb{S}/l}(X^{\wedge \bullet+1})|=0\] for $X$ either $A$ or $R[t]$, as desired.  
\end{proof}

The proof of Theorem~\ref{thm: main computation p-completed} will be similar, but we will need to handle the primes away from $p$ slightly differently.

\begin{proof}[Proof of Theorem~\ref{thm: main computation p-completed}]
We will first show that $K(A^\wedge_p, (x,y);\ZZ_l)\simeq 0$ for $l\neq p$. For this we can apply Gabber rigidity \cite{Gabber_Rigidity} to see that \[K(A^\wedge_p, (x,y);\ZZ_l)\simeq K(k[x,y]/(y^a-x^b), (x,y);\ZZ_l)\] where $k:=R/\sqrt{pR}$. In particular $k$ is a perfect $\FF_p$-algebra and so this is a consequence of \cite{Hesselholt_Madsen}. 

To handle the case when $l=p$, we will use the pullback square~\ref{eqn: main pullback square}. We now consider the commutative diagram below:
\[
\begin{tikzcd}
K(A^\wedge_p;\ZZ_p) \arrow[rd] \arrow[rr] &                                       & K(R;\ZZ_p) \\
                                          & K(R\langle t\rangle;\ZZ_p) \arrow[ru] &           
\end{tikzcd}
\]
The homotopy groups of the fiber of the left diagonal map, which as a spectrum is $K(A^\wedge_p, R\langle t\rangle;\ZZ_p)$, are given by the Witt vector expression in the statement of Theorem~\ref{thm: main computation p-completed}. The homotopy groups of the fiber of the right diagonal map are given by $T_p\pic(R_0)$ in odd positive degrees and zero otherwise by Theorem~\ref{thm: Formal affine invariance}. Then by the octahedral axiom the fiber of the horizontal map, which is $K(A^\wedge_p, (x,y);\ZZ_p)$, sits in a fiber sequence \[K(A^\wedge_p, R\langle t\rangle;\ZZ_p)\to K(A^\wedge_p, (x,y);\ZZ_p)\to K(R\langle t\rangle, (t);\ZZ_p)\] and the long exact sequence of homotopy groups of this sequence then gives the result.
\end{proof}

\printbibliography

@article {Speirs_truncated_polynomials,
    AUTHOR = {Speirs, Martin},
     TITLE = {On the {$K$}-theory of truncated polynomial algebras,
              revisited},
   JOURNAL = {Adv. Math.},
  FJOURNAL = {Advances in Mathematics},
    VOLUME = {366},
      YEAR = {2020},
     PAGES = {107083},
      ISSN = {0001-8708},
   MRCLASS = {19D10 (18F25 55N15)},
  MRNUMBER = {4070307},
       DOI = {10.1016/j.aim.2020.107083},
}

@incollection {Hesselholt_Nikolaus,
    AUTHOR = {Hesselholt, Lars and Nikolaus, Thomas},
     TITLE = {Algebraic {$K$}-theory of planar cuspidal curves},
 BOOKTITLE = {{$K$}-theory in algebra, analysis and topology},
    SERIES = {Contemp. Math.},
    VOLUME = {749},
     PAGES = {139--148},
 PUBLISHER = {Amer. Math. Soc., Providence, RI},
      YEAR = {2020},
   MRCLASS = {19D55 (55P43)},
  MRNUMBER = {4087637},
       DOI = {10.1090/conm/749/15070},
}

@article {Nikolaus_Scholze,
    AUTHOR = {Nikolaus, Thomas and Scholze, Peter},
     TITLE = {On topological cyclic homology},
   JOURNAL = {Acta Math.},
  FJOURNAL = {Acta Mathematica},
    VOLUME = {221},
      YEAR = {2018},
    NUMBER = {2},
     PAGES = {203--409},
      ISSN = {0001-5962},
   MRCLASS = {55U35 (16E40 18E30 19D99)},
  MRNUMBER = {3904731},
MRREVIEWER = {Geoffrey M. L. Powell},
       DOI = {10.4310/ACTA.2018.v221.n2.a1},
}

@article {Land_Tamme,
    AUTHOR = {Land, Markus and Tamme, Georg},
     TITLE = {On the {$K$}-theory of pullbacks},
   JOURNAL = {Ann. of Math. (2)},
  FJOURNAL = {Annals of Mathematics. Second Series},
    VOLUME = {190},
      YEAR = {2019},
    NUMBER = {3},
     PAGES = {877--930},
      ISSN = {0003-486X},
   MRCLASS = {19D55 (19D50 19E08)},
  MRNUMBER = {4024564},
MRREVIEWER = {Guillermo Corti\~{n}as},
       DOI = {10.4007/annals.2019.190.3.4},
}

@article {BMS2,
    AUTHOR = {Bhatt, Bhargav and Morrow, Matthew and Scholze, Peter},
     TITLE = {Topological {H}ochschild homology and integral {$p$}-adic
              {H}odge theory},
   JOURNAL = {Publ. Math. Inst. Hautes \'{E}tudes Sci.},
  FJOURNAL = {Publications Math\'{e}matiques. Institut de Hautes \'{E}tudes
              Scientifiques},
    VOLUME = {129},
      YEAR = {2019},
     PAGES = {199--310},
      ISSN = {0073-8301},
   MRCLASS = {14F30 (13A35)},
  MRNUMBER = {3949030},
       DOI = {10.1007/s10240-019-00106-9},
}

@book {Dundas_Goodwillie_McCarthy,
    AUTHOR = {Dundas, Bj{\o}rn Ian and Goodwillie, Thomas G. and McCarthy,
              Randy},
     TITLE = {The local structure of algebraic {K}-theory},
    SERIES = {Algebra and Applications},
    VOLUME = {18},
 PUBLISHER = {Springer-Verlag London, Ltd., London},
      YEAR = {2013},
     PAGES = {xvi+435},
      ISBN = {978-1-4471-4392-5; 978-1-4471-4393-2},
   MRCLASS = {19-02 (16E40 19D55 55-02 55N99)},
  MRNUMBER = {3013261},
MRREVIEWER = {Charles Weibel},
}

@article{Clausen_Mathew_Morrow,
    AUTHOR = {Clausen, Dustin and Mathew, Akhil and Morrow, Matthew},
     TITLE = {{$K$}-theory and topological cyclic homology of henselian
              pairs},
   JOURNAL = {J. Amer. Math. Soc.},
  FJOURNAL = {Journal of the American Mathematical Society},
    VOLUME = {34},
      YEAR = {2021},
    NUMBER = {2},
     PAGES = {411--473},
      ISSN = {0894-0347},
   MRCLASS = {19D55},
  MRNUMBER = {4280864},
       DOI = {10.1090/jams/961},
}

@misc{Raskin,
    title={On the Dundas-Goodwillie-McCarthy theorem},
    author={Sam Raskin},
    year={2018},
    eprint={1807.06709},
    arxiv={1807.06709},
    primaryClass={math.AT}
}

@article {Hesselholt_Madsen,
    AUTHOR = {Hesselholt, Lars and Madsen, Ib},
     TITLE = {On the {$K$}-theory of finite algebras over {W}itt vectors of
              perfect fields},
   JOURNAL = {Topology},
  FJOURNAL = {Topology. An International Journal of Mathematics},
    VOLUME = {36},
      YEAR = {1997},
    NUMBER = {1},
     PAGES = {29--101},
      ISSN = {0040-9383},
   MRCLASS = {19D55 (16E40 55P91)},
  MRNUMBER = {1410465},
MRREVIEWER = {Charles Weibel},
       DOI = {10.1016/0040-9383(96)00003-1},
}

@article {BMS1,
    AUTHOR = {Bhatt, Bhargav and Morrow, Matthew and Scholze, Peter},
     TITLE = {Integral {$p$}-adic {H}odge theory},
   JOURNAL = {Publ. Math. Inst. Hautes \'{E}tudes Sci.},
  FJOURNAL = {Publications Math\'{e}matiques. Institut de Hautes \'{E}tudes
              Scientifiques},
    VOLUME = {128},
      YEAR = {2018},
     PAGES = {219--397},
      ISSN = {0073-8301},
   MRCLASS = {14F30},
  MRNUMBER = {3905467},
       DOI = {10.1007/s10240-019-00102-z},
}

@misc{BS,
    title={Prisms and Prismatic Cohomology},
    author={Bhargav Bhatt and Peter Scholze},
    year={2019},
    eprint={1905.08229},
    arxiv={1905.08229},
    primaryClass={math.AG}
}

@misc{Me,
      title={On The Algebraic $K$-Theory of Double Points}, 
      author={Noah Riggenbach},
      year={2020},
      eprint={2007.01227},
      archivePrefix={arXiv},
      primaryClass={math.KT}
}

@misc{mccandless,
      title={On curves in K-theory and TR}, 
      author={Jonas McCandless},
      year={2021},
      eprint={2102.08281},
      archivePrefix={arXiv},
      primaryClass={math.KT}
}

@article {Bousfield_localization,
    AUTHOR = {Bousfield, A. K.},
     TITLE = {The localization of spectra with respect to homology},
   JOURNAL = {Topology},
  FJOURNAL = {Topology. An International Journal of Mathematics},
    VOLUME = {18},
      YEAR = {1979},
    NUMBER = {4},
     PAGES = {257--281},
      ISSN = {0040-9383},
   MRCLASS = {55N20 (55N15 55P60)},
  MRNUMBER = {551009},
}

@book {My_Thesis,
    AUTHOR = {Riggenbach, Noah},
     TITLE = {The {S}1 {A}ssembly {M}ap on {K}-{T}heory and {T}opological
              {C}yclic {H}omology},
      NOTE = {Thesis (Ph.D.)--Indiana University},
 PUBLISHER = {ProQuest LLC, Ann Arbor, MI},
      YEAR = {2021},
     PAGES = {81},
      ISBN = {979-8538-13023-8},
   MRCLASS = {Thesis},
  MRNUMBER = {4326928},
}

@article {k1_local_tr,
    AUTHOR = {Mathew, Akhil},
     TITLE = {On {$K(1)$}-local {TR}},
   JOURNAL = {Compos. Math.},
  FJOURNAL = {Compositio Mathematica},
    VOLUME = {157},
      YEAR = {2021},
    NUMBER = {5},
     PAGES = {1079--1119},
      ISSN = {0010-437X},
   MRCLASS = {19D55 (55P42)},
  MRNUMBER = {4256236},
       DOI = {10.1112/S0010437X21007144},
}

@incollection {Gabber_Rigidity,
    AUTHOR = {Gabber, Ofer},
     TITLE = {{$K$}-theory of {H}enselian local rings and {H}enselian pairs},
 BOOKTITLE = {Algebraic {$K$}-theory, commutative algebra, and algebraic
              geometry ({S}anta {M}argherita {L}igure, 1989)},
    SERIES = {Contemp. Math.},
    VOLUME = {126},
     PAGES = {59--70},
 PUBLISHER = {Amer. Math. Soc., Providence, RI},
      YEAR = {1992},
   MRCLASS = {19D99 (13B40 18F25)},
  MRNUMBER = {1156502},
MRREVIEWER = {Sue Geller},
       DOI = {10.1090/conm/126/00509},
}

@incollection {Weibel_homotopy_invariance,
    AUTHOR = {Weibel, C. A.},
     TITLE = {Mayer-{V}ietoris sequences and mod {$p$} {$K$}-theory},
 BOOKTITLE = {Algebraic {$K$}-theory, {P}art {I} ({O}berwolfach, 1980)},
    SERIES = {Lecture Notes in Math.},
    VOLUME = {966},
     PAGES = {390--407},
 PUBLISHER = {Springer, Berlin-New York},
      YEAR = {1982},
   MRCLASS = {18F25 (55N15)},
  MRNUMBER = {689385},
}

@misc{angeltveit2019picard,
      title={Picard groups and the K-theory of curves with cuspidal singularities}, 
      author={Vigleik Angeltveit},
      year={2019},
      eprint={1901.00264},
      archivePrefix={arXiv},
      primaryClass={math.AT}
}

@incollection {Hesselholt_cusps,
    AUTHOR = {Hesselholt, Lars},
     TITLE = {On the {$K$}-theory of planar cuspical curves and a new family
              of polytopes},
 BOOKTITLE = {Algebraic topology: applications and new directions},
    SERIES = {Contemp. Math.},
    VOLUME = {620},
     PAGES = {145--182},
 PUBLISHER = {Amer. Math. Soc., Providence, RI},
      YEAR = {2014},
   MRCLASS = {19D50 (19D55 52B12)},
  MRNUMBER = {3290091},
MRREVIEWER = {Marco Varisco},
       DOI = {10.1090/conm/620/12368},
}

@article {Larsen,
    AUTHOR = {Larsen, M.},
     TITLE = {Filtrations, mixed complexes, and cyclic homology in mixed
              characteristic},
   JOURNAL = {$K$-Theory},
  FJOURNAL = {$K$-Theory. An Interdisciplinary Journal for the Development,
              Application, and Influence of $K$-Theory in the Mathematical
              Sciences},
    VOLUME = {9},
      YEAR = {1995},
    NUMBER = {2},
     PAGES = {173--198},
      ISSN = {0920-3036},
   MRCLASS = {19D55},
  MRNUMBER = {1340844},
MRREVIEWER = {Thierry Lambre},
       DOI = {10.1007/BF00961458},
}

@article {Buenos_Aires_Group,
    AUTHOR = {Buenos Aires Cyclic Homology Group},
     TITLE = {Hochschild and cyclic homology of hypersurfaces},
   JOURNAL = {Adv. Math.},
  FJOURNAL = {Advances in Mathematics},
    VOLUME = {95},
      YEAR = {1992},
    NUMBER = {1},
     PAGES = {18--60},
      ISSN = {0001-8708},
   MRCLASS = {19D55 (18G60)},
  MRNUMBER = {1176152},
MRREVIEWER = {Sue Geller},
       DOI = {10.1016/0001-8708(92)90043-K},
}

@article {Hesselholt_big_drw_cplx,
    AUTHOR = {Hesselholt, Lars},
     TITLE = {The big de {R}ham-{W}itt complex},
   JOURNAL = {Acta Math.},
  FJOURNAL = {Acta Mathematica},
    VOLUME = {214},
      YEAR = {2015},
    NUMBER = {1},
     PAGES = {135--207},
      ISSN = {0001-5962},
   MRCLASS = {13N05 (13D02 13F35)},
  MRNUMBER = {3316757},
MRREVIEWER = {Claudio Pedrini},
       DOI = {10.1007/s11511-015-0124-y},
}

@misc{antieau2021beilinson,
      title={On the Beilinson fiber square}, 
      author={Benjamin Antieau and Akhil Mathew and Matthew Morrow and Thomas Nikolaus},
      year={2021},
      eprint={2003.12541},
      archivePrefix={arXiv},
      primaryClass={math.KT}
}

@article {ALB,
    AUTHOR = {Ansch\"{u}tz, Johannes and Le Bras, Arthur-C\'{e}sar},
     TITLE = {The {$p$}-completed cyclotomic trace in degree 2},
   JOURNAL = {Ann. K-Theory},
  FJOURNAL = {Annals of K-Theory},
    VOLUME = {5},
      YEAR = {2020},
    NUMBER = {3},
     PAGES = {539--580},
      ISSN = {2379-1683},
   MRCLASS = {19D55 (19F99)},
  MRNUMBER = {4132746},
       DOI = {10.2140/akt.2020.5.539},
}

@misc{bhatt_lurie,
      title={Absolute prismatic cohomology}, 
      author={Bhargav Bhatt and Jacob Lurie},
      year={2022},
      eprint={2201.06120},
      archivePrefix={arXiv},
      primaryClass={math.AG}
}

@article {Elden,
    AUTHOR = {Elmanto, Elden},
     TITLE = {T{HH} and {TC} are (very) far from being homotopy functors},
   JOURNAL = {J. Pure Appl. Algebra},
  FJOURNAL = {Journal of Pure and Applied Algebra},
    VOLUME = {225},
      YEAR = {2021},
    NUMBER = {8},
     PAGES = {Paper No. 106640, 12},
      ISSN = {0022-4049},
   MRCLASS = {19E15 (19D55)},
  MRNUMBER = {4189043},
MRREVIEWER = {Inbar Klang},
       DOI = {10.1016/j.jpaa.2020.106640},
       URL = {https://doi-org.proxyiub.uits.iu.edu/10.1016/j.jpaa.2020.106640},
}

@misc{AMM,
  doi = {10.48550/ARXIV.2203.06472},
  
  url = {https://arxiv.org/abs/2203.06472},
  
  author = {Antieau, Benjamin and Mathew, Akhil and Morrow, Matthew},
  
  keywords = {K-Theory and Homology (math.KT), FOS: Mathematics, FOS: Mathematics},
  
  title = {The K-theory of perfectoid rings},
  
  publisher = {arXiv},
  
  year = {2022},
  
  copyright = {arXiv.org perpetual, non-exclusive license}
}
\end{document}